\newcommand{\NN}{\mathbb{N}}
\newcommand{\Z}{\mathbb{Z}}
\newcommand{\wt}{\widetilde}
\newcommand{\sym}[1]{\mathfrak{S}_{#1}}
\renewcommand{\P}{\Lambda^+}
\newcommand{\pReg}{\Lambda^+_p}
\newcommand{\ccl}[1]{\mathscr{C}_{#1}}
\newcommand{\Ccl}[2]{\mathcal{C}_{#1,#2}}
\newcommand{\B}{\mathbb{B}}
\newcommand{\f}{r}
\newcommand{\C}{\Lambda}
\newcommand{\Fun}{\mathscr{F}}
\newcommand{\F}{{F}}
\newcommand{\FF}{\F}
\newcommand{\A}{\mathbb{A}}
\newcommand{\Ext}{\mathsf{Ext}}
\newcommand{\sol}{\theta}
\newcommand{\cont}{\text{{\tiny $\#$}}}
\newcommand{\im}{\mathrm{im}}
\newcommand{\sref}{\vDash}
\newcommand{\wref}{\preccurlyeq}
\newcommand{\Char}{\mathrm{char}}
\newcommand{\Q}{\mathbb{Q}}
\newcommand{\Fp}{\mathbb{F}_p}
\newcommand{\Ind}{\mathrm{ind}}
\newcommand{\am}{n}
\newcommand{\rad}{{\mathsf{Rad}}}
\newcommand{\Des}[2]{\mathscr{D}_{#1,#2}}
\newcommand{\comp}[1]{#1^*}
\newcommand{\Hom}{{\mathsf{Hom}}}
\newcommand{\soc}{\mathsf{Soc}}
\newcommand{\tbl}[1]{\textcolor{blue}{#1}}
\newcommand{\KJ}[1]{\textcolor{red}{#1}}
\newcommand{\cO}{\mathcal{O}}
\newcommand{\cD}{\mathcal{D}}
\newcommand{\bQ}{\mathbb{Q}}
\theoremstyle{definition}
\newtheorem{defn}{Definition}[section]
\newtheorem{rem}[defn]{Remark}
\newtheorem{eg}[defn]{Example}
\newtheorem{conj}[defn]{Conjecture}
\theoremstyle{plain}
\newtheorem{lem}[defn]{Lemma}
\newtheorem{cor}[defn]{Corollary}
\newtheorem{thm}[defn]{Theorem}
\newtheorem*{mainthm}{Theorem 1.1}
\numberwithin{equation}{section}
\begin{document}
\title{The Representation Type of the Descent Algebras of Type $\A$}

\author{Karin Erdmann}
\address[K. Erdmann]{Mathematical Institute, University of Oxford, Radcliffe Observatory Quarter, Oxford OX2 6GG, United Kingdom.}
\email{erdmann@maths.ox.ac.uk}

\author{Kay Jin Lim}
\address[K. J. Lim]{Division of Mathematical Sciences, Nanyang Technological University, SPMS-04-01, 21 Nanyang Link, Singapore 637371.}
\email{limkj@ntu.edu.sg}

\begin{abstract} We classify the representation type of the descent algebras of type $\A$ in the positive characteristic case. The algebras have finite representation type only for a few small degrees; otherwise, they are wild. Our main reduction method relies on a surjective algebra homomorphism from a descent algebra of type $\A$ to another of lower degree. For small degree cases, we employ techniques from the representation theory of finite-dimensional algebras.
\end{abstract}

\subjclass[2010]{05E10, 20C30}

\keywords{descent algebra, symmetric group, representation type, Ext quiver}

\maketitle

\section{Introduction}

The descent algebras of finite Coxeter groups were introduced by Solomon \cite{Solomon:1976a} in 1976. Since then, they have been studied and
exploited in various contexts, in algebra
and algebraic combinatorics and also in
probability theory, discrete geometry and topology.
Let $(W, S)$ be a finite Coxeter system, for example for type $\A_{n-1}$
the group $W$ is the symmetric group
$\sym{n}$. For each subset of $S$ there is a parabolic subgroup of $W$
and a  corresponding permutation character for $W$. The Mackey formula
provides a tool for multiplying these characters and gives rise to a simple
product formula. In \cite{Solomon:1976a},  Solomon discovered a
non-commutative analogue of this multiplication rule, which defines
the descent algebra $\mathscr{D}_\F(W)$ as an explicit subalgebra of the group
algebra $\F W$. He worked with $\F = \Z$, subsequently more
general coefficients were used.
The descent algebra is basic, that is, when $\F$ is a field
its simple modules are one-dimensional. This is rather unusual for naturally
occurring algebras but it makes it feasible to compute explicit sets of primitive
orthogonal idempotents, with potential for applications to
the representation theory of the group algebra $\F W$.

We describe some applications and connections.
To any $W$, there is associated a hyperplane arrangement, and $W$ acts on the
faces of this arrangement. This gives an action of $W$ on the face
semigroup algebra of the arrangement. It was shown by
Bidigare \cite{Bidigare:1997a}  that the subalgebra of invariants is
anti-isomorphic to the descent algebra of $W$.
The face semigroup algebras belong to a larger class of algebras called the unital left regular band algebras and the latter fall into another broader class of algebras called the $\mathcal{R}$-trivial monoid algebras (see \cite{Berg/Bergeron/Bhargava/Saliola:2011a,Schocker:2008a}). The largest class of algebras mentioned in the previous sentence includes the $0$-Hecke algebras.
In  $\mathscr{D}_\F(W)$, one may find a set of primitive orthogonal idempotents which also belong to the group algebra $\F W$, and this is a tool for the study of $\F W$-modules more generally. Specifically in the type $\A$ case, both the complete set of non-isomorphic simple modules for
$\Des{n}{\F}:=\mathscr{D}_\F(\sym{n})$ and $\F\sym{n}$ are labelled by the same set  consisting of $p$-regular partitions of $n$, where $p$ is the characteristic of the field $\F$. Furthermore, the descent algebras of type $\A$ have been used to investigate the Lie modules and Lie powers (see, for example, \cite{BDEM,ELT,ES,Lim:2023a,Magnus/Karrass/Solitar:1976a,Reutenauer:1993a,Schoc03}) and found their applications in the classical results in the representation theory of symmetric groups (see \cite{Blessenohl/Schocker:2005a}).


We describe some previous results.
When $\F = \Q$,
 Solomon \cite{Solomon:1976a} exhibited a basis of $\mathscr{D}_\F(W)$, described its Jacobson radical and amongst other things,  proved that the algebra is basic.
These fundamental properties have been extended to the finite field case by Atkinson--van Willigenburg \cite{Atkinson/vanWilligenburg:1997a} for  type $\A$ and subsequently, together with Pfeiffer \cite{Atkinson/Pfeiffer/vanWilligenburg:2002a}, for the general case. Since $\mathscr{D}_\F(W)$ is basic, by a general theorem of Gabriel  \cite{Gabriel:1979a},
it is isomorphic to a quotient of the path algebra of its Ext quiver.
While the group algebra $\F W$ can be semisimple, that is when $p\nmid |W|$, the descent algebra $\mathscr{D}_\F(W)$ is usually not.
To understand its representation theory, it is therefore desirable to study its simple modules, decomposition and Cartan matrices, Ext quiver, and representation type. The simple modules and decomposition matrix $D$ are described by Atkinson--Pfeiffer--van Willigenburg \cite{Atkinson/Pfeiffer/vanWilligenburg:2002a}.
Furthermore, it is shown in that paper that $\wt{C}=D^\top CD$ where $\wt{C}$ and $C$ are the Cartan matrices for the descent algebra over $\Fp$ and $\Q$ respectively. In fact, the Cartan matrices $C$ and $\wt{C}$ depend only on the characteristics of the corresponding fields. By abuse, we continue to write, for example, $\wt{C}$ for the Cartan matrix of the descent algebra over arbitrary fields of characteristic $p$. In the type $\A$ case, in \cite{Blessenohl/Laue:2002a}, Blessenohl--Laue gave a closed formula for the entries of $C$.
As such, in this case, one can compute $\wt{C}$ using both $D$ and $C$. For the Ext quivers, they are known for both types $\A$ and $\B$ unless $p\mid |W|$, and theoretically it is possible to compute $C$ and $D$  for the exceptional types. When $p=\infty$ and in the type $\A$ case, the Ext quivers have been described by Schocker  \cite{Schocker:2004a}, their
structure  essentially follows  from a result of Blessenohl--Laue \cite{Blessenohl/Laue:2002a}. In that paper, Schocker also described the representation type of $\Des{n}{\Q}$, that is, it has finite representation type if $n\leq 5$, otherwise, it is wild.
Using the connection with  face semigroup algebra, Saliola \cite{Saliola:2008a} obtained the Ext quivers for the descent algebras in the case when $p\nmid |W|$ and $W$ is of either type $\A$ or $\B$.  Otherwise, in general the Ext quivers and the representation type for the descent algebras seem to be not known.

In this paper, we study the representation type of the descent algebras in type $\A$ case when $p<\infty$, and we prove the following theorem.

\begin{thm}\label{T: rep type} Assume $F$ is a field of characteristic $p<\infty$. The descent algebra $\Des{n}{\F}$ has finite representation type if and only if either
\begin{enumerate}[(i)]
  \item $p=2$ and $n\leq 3$,
  \item $p=3$ and $n\leq 4$, or
  \item $p\geq 5$ and $n\leq 5$.
\end{enumerate} Otherwise, it has wild representation type.
\end{thm}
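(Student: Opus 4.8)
The plan is to reduce the statement to finitely many algebras by a monotonicity argument and then to determine the representation type of each by computing its Gabriel quiver together with enough of its relations. For the monotonicity I would use idempotent truncation: I would produce an idempotent $e\in\Des{n}{\F}$ with $e\,\Des{n}{\F}\,e\cong\Des{n-1}{\F}$, reflecting the inclusion of $\sym{n-1}$ in $\sym{n}$ as a parabolic subgroup. Representation-finiteness descends from an algebra $A$ to any truncation $eAe$, while wildness ascends from $eAe$ to $A$; hence for each fixed $p$ there is a threshold $n_0(p)$ with $\Des{n}{\F}$ representation-finite for $n<n_0(p)$ and wild for $n\geq n_0(p)$, and in particular no tame case arises. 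It then suffices, for each $p$, to establish finite type for the largest claimed finite value of $n$ and wildness for the smallest claimed wild value.

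For $p\geq 7$ the boundary values $n=5,6$ satisfy $p\nmid n!$, so $\F\sym{n}$ is semisimple, the decomposition matrix $D$ is the identity, and $\wt{C}=C$. In this range $\Des{n}{\F}$ is the reduction modulo $p$ of an order in which $|\sym{n}|$ is invertible, so it has the same Gabriel quiver, relations, and hence representation type as $\Des{n}{\Q}$; Schocker's theorem \cite{Schocker:2004a} then gives finite type at $n=5$ and wildness at $n=6$, and monotonicity settles all $n$. The genuinely new work is therefore confined to the small primes $p\in\{2,3,5\}$, where the boundary cases are honestly modular.

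For these cases I would compute the Gabriel quiver from $\wt{C}=D^\top CD$, feeding in the decomposition matrix of \cite{Atkinson/Pfeiffer/vanWilligenburg:2002a} and the closed Cartan formula of \cite{Blessenohl/Laue:2002a} to read off the dimensions of $\Ext^1$ between simple modules. To prove wildness of $\Des{4}{\F}$ ($p=2$), $\Des{5}{\F}$ ($p=3$) and $\Des{6}{\F}$ ($p=5$), I would pass to the radical-square-zero quotient and apply the separated-quiver criterion, checking that its separated quiver has a component that is neither Dynkin nor Euclidean — typically because some pair of simples already satisfies $\dim\Ext^1\geq 3$; since wildness of a quotient forces wildness of the whole algebra, this is enough. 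For the finite cases $\Des{3}{\F}$ ($p=2$), $\Des{4}{\F}$ ($p=3$) and $\Des{5}{\F}$ ($p=5$), I would determine the quiver with its full set of relations and identify the resulting bound quiver algebra with a representation-finite one (for instance a Nakayama algebra or a tilted algebra of Dynkin type), or, given the small dimensions involved, simply enumerate the indecomposable modules directly.

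The main obstacle is precisely these modular boundary cases with $p\in\{2,3,5\}$: the available descriptions of the Ext quiver cover only $p=\infty$ \cite{Schocker:2004a} or $p\nmid|\sym{n}|$ \cite{Saliola:2008a}, so both the quiver and — crucially — its relations must be computed from scratch, the latter because representation type is not determined by the quiver alone. The most delicate points are the finite-type verifications at $(p,n)=(2,3),(3,4),(5,5)$, where one must rule out wildness in a genuinely modular setting and therefore needs real control of the relations rather than a mere count of arrows. Making the corner isomorphism $e\,\Des{n}{\F}\,e\cong\Des{n-1}{\F}$ precise, so that the monotonicity step is rigorous, is the other place demanding care.
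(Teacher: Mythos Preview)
Your overall strategy---establish monotonicity in $n$ and then settle finitely many boundary cases---matches the paper's. Two technical choices differ, and one of your steps has a genuine gap.

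For the reduction, the paper does not use an idempotent truncation $e\,\Des{n}{\F}\,e\cong\Des{n-1}{\F}$; it is not clear such an $e$ exists, and you offer no construction. Instead the paper uses the \emph{surjective} algebra homomorphism $\Delta_s:\Des{n}{\F}\to\Des{n-s}{\F}$ of Bergeron--Garsia--Reutenauer (extended here to arbitrary coefficient rings), so wildness propagates upward via a quotient rather than a corner. Both mechanisms yield the needed monotonicity, but the surjection is already in the literature, whereas your corner isomorphism would be new work and, as you acknowledge, is one of the delicate points.

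The more serious gap is your claim that one can ``read off the dimensions of $\Ext^1$ between simple modules'' from $\wt C=D^\top CD$. The Cartan matrix records composition multiplicities in projectives, not their radical-layer structure, so it does not determine the Gabriel quiver. The paper supplements the Cartan data with a reduction-mod-$p$ argument (every arrow of $Q_{n,\infty}$ survives to $Q_{n,p}$), with explicit primitive idempotents in small degrees, and with structural facts about specific projectives (e.g.\ that certain $\cO$-forms stay uniserial after reduction). Your suggestion that wildness is typically witnessed by some $\dim\Ext^1\geq 3$ is not what happens: in each wild boundary case the paper exhibits a non-Dynkin, non-Euclidean piece of the separated quiver built from several single arrows and loops, and for $(p,n)=(2,4)$ it instead computes a local corner algebra and invokes Ringel's classification. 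For the finite-type verifications the paper is likewise more specific than ``identify with a Nakayama or tilted algebra'': $(2,3)$ is radical-square-zero and handled by the separated-quiver criterion; $(3,4)$ is reduced to radical-square-zero by quotienting out the socle of a projective-injective; and $(5,5)$---the hardest case---is realised as a codimension-one subalgebra, with the same radical, of the hereditary path algebra of type $\mathbb{E}_7$, and a radical-embedding lemma transfers finite type. Your plan would need to rediscover something of this kind.
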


Our crucial reduction method uses an extension of a result by Bergeron--Garsia--Reutenauer \cite{Bergeron/Garsia/Reutenauer:1992a}, that is, for $1\leq s\leq n$, there is a surjective algebra
homomorphism from $\Des{n}{\F}$ onto $\Des{n-s}{\F}$.  With this, it suffices to analyse  a few small degrees  $n$.
For these,  we mostly use  general methods of representation theory   to determine the Ext quivers, and in some cases a presentation of the algebra.

Alternatively, our results can also be proved by computations with elements.
 In order to find the Ext quiver of an algebra, it helps  to know its complete set of primitive orthogonal idempotents, which may be of independent interest. In the type $\A$ case and when $p=\infty$, different sets of primitive orthogonal idempotents for the descent algebras have been obtained, for  example, those given by Garsia--Reutenauer \cite{Garsia/Reutenauer:1989a}. When $p<\infty$, in the recent paper \cite{Lim:2023a}, the second author gave a construction for such set of idempotents. The construction has been subsequently generalised to arbitrary finite Coxeter group by Benson--Lim in \cite[Section 8]{BensonLim2025}. While the construction does not offer a closed formula, in practice, one may perform the computation for small cases and obtain the Ext quivers which is sufficient for our purpose.

The rest of the paper is organised as follows. In the next section, we gather together the necessary background and prove some elementary results. In Sections \ref{S:BGRmap} and \ref{S:pullback}, we consider the
map introduced by Bergeron--Garsia--Reutenauer \cite{Bergeron/Garsia/Reutenauer:1992a} and subsequently study the pullback of the simple modules induced by the map. As a consequence, we show that the Ext quiver of $\Des{n-s}{\F}$ appears as a subquiver of the Ext quiver of $\Des{n}{\F}$ where $n\geq s\geq 1$. We devote most of the final section, Section \ref{S: rep type}, to the proof of Theorem \ref{T: rep type}. Supported by our calculation for small cases, we end the paper with a conjecture for the Ext quivers of the type $\A$ descent algebras.

\section{Preliminaries}\label{S: prelim}

 Let $\F$ be an algebraically closed field of characteristic $p$ (either $p=\infty$   or $p$ is finite, where $p=\infty$ means characteristic zero) and $\cO$ be a commutative ring with 1. We remark that, for our purpose in the study of the representation theory of descent algebras, $\F$ may be taken as any arbitrary field. But we will continue to assume that $\F$ is algebraically closed to avoid any technical issues. Also, in both Subsection \ref{SS:reductionp} and Section \ref{S: rep type}, we require that $p<\infty$ and will highlight the extra assumption there. In the case when $p<\infty$, $\mathbb{F}_p$ denotes the finite field consisting of $p$ elements. Furthermore, we assume that any $\F$-algebra $A$ is finite-dimensional over $\F$ and any $A$-module is a left module (unless otherwise stated) and is also finite-dimensional over $\F$. Throughout, let $\NN_0$ and $\NN$ be the sets of non-negative and positive integers respectively. For any integers $a\leq b$, the set $\{a,a+1,\ldots,b\}$ consisting of consecutive integers is denoted as $[a,b]$.

Most of the background material in this section can be found in the textbooks such as \cite{Barot,Ben1,EH}.

\subsection{Basic algebras}\label{SS:basic algebra}  An algebra is basic if all its simple modules are 1-dimensional.
Descent algebras are basic, and therefore to study them, one can exploit tools from representation theory of algebras.
By a theorem of Gabriel,  a basic algebra over an algebraically closed field has a presentation by quiver and relations.
We recall the relevant input, since such a presentation of the descent algebras is essential for us.

\bigskip

A (finite) quiver $Q$ is a directed graph $Q=(Q_0,Q_1)$ where $Q_0$ is a finite set of vertices and $Q_1$ is a finite set of arrows. For each arrow $\gamma\in Q_1$, we write $s(\gamma)$ and $t(\gamma)$ for the starting and terminating vertices of $\gamma$ respectively. The quiver may have multiple loops or arrows. The underlying graph of $Q$ is the undirected graph obtained from $Q$ by forgetting the orientations of the arrows.

A path $\alpha$ of length $\ell$ in $Q$ is a sequence \[\alpha=\alpha_\ell\ldots\alpha_1\] where $\alpha_1,\cdots,\alpha_\ell$ are arrows and, for each $i\in [1,\ell-1]$, $s(\alpha_{i+1})=t(\alpha_i)$. In this case, we write $s(\alpha)=s(\alpha_1)$ and $t(\alpha)=t(\alpha_\ell)$.
Moreover, for each vertex $v\in Q_0$, we define the path of length zero $e_v$ where $s(e_v)=v=t(e_v)$. Let $\beta$ be another path in $Q$. We write $\beta\alpha$ for the composition of the paths if $t(\alpha)=s(\beta)$.  We use the  convention that the composition of the paths
is read from right to left.
In the case $v=s(\alpha)$, we have  $\alpha e_v=\alpha$, and similarly  $e_w\alpha=\alpha$ if $w=t(\alpha)$.

Let $Q$ be a quiver. The path algebra $\F Q$ is the $\F$-algebra with a formal basis the set of all paths in $Q$ where, for any paths $\alpha,\beta$, we define $\beta\cdot \alpha$ as $\beta\alpha$ if $s(\beta)=t(\alpha)$ and 0 otherwise.
In particular the paths $e_v$ of length zero give a set of orthogonal idempotents of $\F Q$, and  their sum $\sum_{v\in Q_0} e_v$ is the identity of the algebra.
These idempotents help to determine the composition multiplicities: if $V$ is an $\F Q$-module, and   $(V:S_i)$ is the  multiplicity of the simple $\F Q$-module $S_i$ as a composition factor of $V$, then $(V:S_i)$ is equal to $\dim_{\F}e_iV$.
 The (two-sided) ideal of $\F Q$ generated by all arrows of positive lengths is denoted as $\F Q^+$. More generally, for any $n\in\NN$, $(\F Q^+)^n$ denotes the ideal generated by all paths of lengths at least $n$.
 Notice that $\F Q$ is finite-dimensional if and only if $Q$ contains no oriented cycles.

 We have the following fundamental result by Gabriel.


\begin{thm}[\cite{Gabriel:1979a}]\label{T: path alg rel} Let $A$ be a basic $\F$-algebra. Then $A\cong \F Q_A/I$ where $I$ is an ideal such that $(\F Q_A^+)^n\subseteq I\subseteq (\F Q_A^+)^2$ for some integer $n\geq 2$.
\end{thm}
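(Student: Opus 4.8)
The plan is to take $Q_A$ to be the Ext quiver of $A$ and to build an explicit surjection $\phi\colon\F Q_A\to A$ whose kernel is admissible. First I would fix a complete set of primitive orthogonal idempotents $e_1,\dots,e_r$ with $1=\sum_i e_i$; since $A$ is basic, the indecomposable projectives $Ae_i$ are pairwise non-isomorphic and $A/\rad A\cong\F^r$, with the images of the $e_i$ forming the standard basis of idempotents. Writing $J=\rad A$, which is nilpotent as $A$ is finite-dimensional, I would give $Q_A$ the vertex set $\{1,\dots,r\}$ and, for each ordered pair $(i,j)$, a number of arrows $i\to j$ equal to $\dim_\F e_j(J/J^2)e_i$ (equivalently $\dim_\F\Ext^1_A(S_i,S_j)$). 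To define $\phi$ I would send the length-zero path $e_i$ to the idempotent $e_i$, and for each arrow $\alpha\colon i\to j$ send $\alpha$ to a fixed element of $e_jJe_i$ whose class modulo $J^2$ is the corresponding basis vector of $e_j(J/J^2)e_i$; extending multiplicatively (respecting the right-to-left convention for paths) gives an algebra homomorphism.

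Next I would prove surjectivity. Let $B=\im\phi$ be the subalgebra generated by the $e_i$ and the chosen arrow images. The images of the $e_i$ span $A/J$, so $A=B+J$, and the arrow images span $J/J^2$, so $J\subseteq B+J^2$; multiplying the latter inclusion by powers of $J$ yields $J^k\subseteq B+J^{k+1}$, and hence $A=B+J^k$ for every $k$ by induction. Since $J$ is nilpotent, $J^k=0$ for $k$ large, whence $B=A$ and $\phi$ is onto.

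Putting $I=\ker\phi$, it remains to verify $(\F Q_A^+)^n\subseteq I\subseteq(\F Q_A^+)^2$. The left-hand inclusion is immediate: every arrow maps into $J$, so $\phi$ carries any path of length $\geq m$ into $J^m$; choosing $n\geq 2$ with $J^n=0$ gives $\phi\bigl((\F Q_A^+)^n\bigr)\subseteq J^n=0$. For the right-hand inclusion I would take $x\in I$ and decompose it according to path length as $x=x_0+x_1+y$ with $y\in(\F Q_A^+)^2$. Applying $\phi$ yields $0=\phi(x_0)+\phi(x_1)+\phi(y)$, where $\phi(x_0)$ is a combination of the $e_i$, while $\phi(x_1)\in J$ and $\phi(y)\in J^2$. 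Reducing modulo $J$ and using that the images of the $e_i$ are linearly independent in $A/J$ forces $x_0=0$; then reducing modulo $J^2$ and using that the arrow images are linearly independent in $J/J^2$ forces $x_1=0$. Hence $x=y\in(\F Q_A^+)^2$.

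The main obstacle is this last inclusion $I\subseteq(\F Q_A^+)^2$, as it is the only step that genuinely exploits the precise choice of arrow representatives, namely lifting an honest basis of $J/J^2$ rather than an arbitrary generating set. Everything else, that is the surjectivity of $\phi$ and the inclusion $(\F Q_A^+)^n\subseteq I$, is a formal consequence of Nakayama's lemma and the nilpotency of $J$.
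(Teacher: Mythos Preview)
The paper does not give its own proof of this statement: it is quoted as a classical result of Gabriel with a citation to \cite{Gabriel:1979a} and no argument is supplied. Your proposal is the standard proof and is correct; the construction of $\phi$, the Nakayama-type surjectivity argument, and the two inclusions for $I$ are all sound, including the delicate point you flag, namely that $I\subseteq(\F Q_A^+)^2$ relies on the arrow images being chosen to lift a genuine basis of $\bigoplus_{i,j}e_j(J/J^2)e_i=J/J^2$.
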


The quiver $Q_A$ in Theorem \ref{T: path alg rel} is the Ext quiver of  $A$, sometimes called Gabriel quiver. We recall its definition.
Its vertices are labelled by a full set of pairwise non-isomorphic simple $A$-modules $S_1, \ldots, S_r$ which is in bijection
with the set of paths  $e_i$ of length zero in $\F Q_A$, that is the $A$-module $Ae_i$  is the projective cover of the simple module
$S_i$.  The number of arrows $i\to j$ is defined to be the dimension of
${\rm Ext}_A^1(S_i, S_j)$,  recall that this space is  isomorphic to
\[\Hom_A(P_j,\rad(P_i))/\Hom_A(P_j,\rad^2(P_i))=e_j\rad(A)e_i/e_j\rad^2(A)e_i.\]
Here $\rad(A)$ is the   Jacobson radical of $A$, and for any $A$-module $V$, we write $\rad^n(V)=\rad^n(A)V$ for the $n$-th radical power, where   $n\in\NN$. Note that $\dim {\rm Ext}^1_A(S_i, S_j) =m \neq 0$ if and only if there is an indecomposable $A$-module of radical length two with socle the direct sum of
$m$ copies of  $S_j$ and top $S_i$. The Ext quiver of $A$ plays a crucial role in understanding the representation theory of $A$.

\subsection{Representation type}

Let $A$ be an $F$-algebra. It has finite representation type if there are only finite number of non-isomorphic indecomposable $A$-modules. It has tame representation type if it is not of finite type and, up to isomorphism, for each $d\in\NN$, almost all (except finitely many) indecomposable $A$-modules of dimension $d$ can be parametrised by a finite number of 1-parameter families.  The algebra  $A$ has wild representation type if, loosely speaking, its representations incorporate the representations of all finite-dimensional algebras, details can be found for example in \cite{Ben1}.
By a theorem of Drozd (see \cite{CB,Dr})  an algebra which is not of  finite type (that is, infinite type) is either tame or wild but not both.


We have the following famous result by Gabriel.

\begin{thm}[\cite{Gabriel:1972a}]\label{T: Gab} The path algebra $\F Q$ has finite type if and only if the underlying graph of $Q$ is a disjoint union of Dynkin diagrams of types $\mathbb{A}$, $\mathbb{D}$ or $\mathbb{E}$. In this case, up to isomorphism, the indecomposable $\F Q$-modules are in one-to-one correspondence with the positive roots.
\end{thm}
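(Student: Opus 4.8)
The plan is to reduce first to the connected case and then treat the two implications separately. Since the path algebra decomposes as $\F Q\cong\prod_i \F Q_i$ over the connected components $Q_i$ of $Q$, with representations splitting accordingly, $\F Q$ has finite type if and only if each $\F Q_i$ does; so I may assume $Q$ is connected. The central invariant is the \emph{Tits form} of the underlying graph, $q(\underline d)=\sum_{v\in Q_0} d_v^2-\sum_{\gamma\in Q_1} d_{s(\gamma)}d_{t(\gamma)}$ for $\underline d=(d_v)\in\Z^{Q_0}$, which depends only on the underlying graph since each arrow contributes the symmetric term $d_{s(\gamma)}d_{t(\gamma)}$. Because $\F Q$ is hereditary, for any module $M$ of dimension vector $\underline d$ one has the Euler identity $q(\underline d)=\dim_\F\Hom_{\F Q}(M,M)-\dim_\F\Ext^1_{\F Q}(M,M)$, linking the combinatorics of the graph to the homological algebra of representations. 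I would quote the classical fact on integral quadratic forms (equivalently, on symmetrisable Cartan matrices) that, for a connected graph, $q$ is positive definite if and only if the graph is a Dynkin diagram of type $\mathbb{A}$, $\mathbb{D}$ or $\mathbb{E}$.

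For the direction ``finite type $\Rightarrow$ Dynkin'' I would run Tits' geometric argument. For a fixed dimension vector $\underline d$, form the affine representation variety $\operatorname{Rep}(Q,\underline d)$, on which $G=\prod_{v\in Q_0}\GL_{d_v}(\F)$ acts with orbits exactly the isomorphism classes of representations. As $\F$ is algebraically closed, finite type forces finitely many orbits on each $\operatorname{Rep}(Q,\underline d)$, hence a dense orbit. Comparing $\dim G=\sum_v d_v^2$, $\dim\operatorname{Rep}(Q,\underline d)=\sum_\gamma d_{s(\gamma)}d_{t(\gamma)}$, and the orbit dimension, and using that the stabiliser of $M$ is the open subset $\operatorname{Aut}(M)\subseteq\operatorname{End}_{\F Q}(M)$, one gets $q(\underline d)=\dim_\F\operatorname{End}_{\F Q}(M)\ge 1$ for $M$ in the dense orbit. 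If $q$ were not positive definite, there would be some $\underline d$ with $q(\underline d)\le 0$, whence $\dim\operatorname{Rep}(Q,\underline d)\ge\dim G$ and the same count produces positive-dimensional families of pairwise non-isomorphic indecomposables, contradicting finite type. Thus $q$ is positive definite and $Q$ is Dynkin.

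For ``Dynkin $\Rightarrow$ finite type'' together with the bijection, I would use the Bernstein--Gelfand--Ponomarev reflection functors. At a sink or source $i$, the functor $S_i^{\pm}$ gives a bijection between the indecomposables of $\F Q$ other than the simple at $i$ and those of the quiver $\sigma_i Q$ obtained by reversing the arrows at $i$, and on dimension vectors it realises the simple reflection $s_i$ on the root lattice $\Z^{Q_0}$ attached to $q$. Positive definiteness first forces every indecomposable $M$ to have $\Ext^1_{\F Q}(M,M)=0$ and $\operatorname{End}_{\F Q}(M)=\F$, so the Euler identity yields $q(\underline{\dim}\,M)=1$; that is, its dimension vector is a positive root. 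Iterating reflection functors along an admissible ordering (equivalently, applying the Coxeter functor, whose action on dimension vectors is a Coxeter element of finite order in the Dynkin Weyl group) carries every indecomposable to a simple module after finitely many steps, giving finiteness of the set of indecomposables and a surjection from indecomposables onto positive roots; conversely each positive root lies in the Weyl orbit of a simple root and is recovered by applying the inverse functors to the corresponding simple. Injectivity of $\underline{\dim}$ on indecomposables then follows from the density argument combined with $q(\underline{\dim}\,M)=1$, completing the one-to-one correspondence.

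I expect the reverse direction to carry the main technical burden. Verifying that the reflection functors preserve indecomposability, act precisely as the claimed simple reflections on dimension vectors, and that the iteration terminates exactly when the Coxeter element does, is where the Dynkin hypothesis is genuinely used and where the bookkeeping is heaviest; by contrast, the passage from positive definiteness of $q$ to the Dynkin classification is a purely combinatorial citation.
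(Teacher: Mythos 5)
The paper does not prove this statement at all: it is quoted as a classical black box from Gabriel's 1972 paper \cite{Gabriel:1972a} and invoked later (in \Cref{S: rep type}, e.g.\ to see that $\Des{5}{\F}$ is hereditary of finite type for $p\geq 7$, and that the path algebra of the $\mathbb{E}_7$ quiver in \Cref{Dg:E7} has $63$ indecomposables), so there is no internal argument to compare yours against. Your outline is the now-standard proof — Tits' orbit-dimension count in $\operatorname{Rep}(Q,\underline d)$ for ``finite $\Rightarrow$ Dynkin'', and Bernstein--Gelfand--Ponomarev reflection and Coxeter functors for ``Dynkin $\Rightarrow$ finite'' plus the root correspondence — and it is correct in substance; note that the geometric half uses that $\F$ is infinite, which is safe here since the paper assumes $\F$ algebraically closed, and that positivity of $q$ on all of $\Z^{Q_0}$ follows from positivity on dimension vectors via $q(\underline d)\geq q(|\underline d|)$, a point worth making explicit.

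One step is stated in the wrong logical order and, as written, is a genuine gap: you assert that ``positive definiteness first forces every indecomposable $M$ to have $\Ext^1_{\F Q}(M,M)=0$ and $\operatorname{End}_{\F Q}(M)=\F$'' and then feed this into the Euler identity to get $q(\underline{\dim}\,M)=1$. Positive definiteness alone does not directly yield the brick property; in the standard treatments these facts are \emph{outputs} of the reflection-functor induction, not inputs. The correct order is: using an admissible sequence of sinks/sources and the finite order of the Coxeter transformation on the (positive definite) root lattice, show every indecomposable is carried to a simple module after finitely many reflection functors; since these functors are inverse equivalences between the full subcategories of indecomposables avoiding the relevant simples, they preserve endomorphism rings and extensions, so $\operatorname{End}_{\F Q}(M)=\F$, $\Ext^1_{\F Q}(M,M)=0$ and $q(\underline{\dim}\,M)=1$ all follow simultaneously, and only then does your open-dense-orbit argument give injectivity of $\underline{\dim}$ on indecomposables. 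With that reordering (and the routine verification, which you rightly flag as the technical burden, that $S_i^{\pm}$ acts as $s_i$ on dimension vectors of non-simple indecomposables), your proposal is a complete and faithful sketch of the standard proof.
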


For the path algebras of tame type, we have the following classification.

\begin{thm}[\cite{Dlab/Ringel:1976a,D-F}]\label{T:tame} Suppose that $Q$ is connected without oriented cycles. Then $\F Q$ is tame if and only if the underlying graph of $Q$ is a (simply laced) extended Dynkin diagram.
\end{thm}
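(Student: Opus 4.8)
The plan is to combine Gabriel's theorem (\Cref{T: Gab}) with the trichotomy for representation type recalled above, reducing the statement to two complementary implications. Write $\bar Q$ for the underlying graph, and note that, $Q$ being acyclic, $\F Q$ is hereditary. Since $\bar Q$ is connected, $\F Q$ fails to be of finite type precisely when $\bar Q$ is not Dynkin, and in that case the dichotomy of Drozd recalled above says $\F Q$ is tame or wild but not both. The organising tool is the Tits (Euler) form $q_{\bar Q}(x)=\sum_{i} x_i^2-\sum_{\{i,j\}} x_i x_j$ on $\Z^{Q_0}$, whose classical classification I would invoke: for connected $\bar Q$ the form is positive definite exactly for the Dynkin diagrams, positive semidefinite but not definite exactly for the extended Dynkin diagrams (in which case its radical is $\Z\delta$ for the minimal positive imaginary root $\delta$), and indefinite otherwise. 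I would also record the combinatorial lemma that a connected graph is indefinite if and only if it properly contains an extended Dynkin diagram as a subgraph. With Gabriel's theorem this makes the three cases $\bar Q$ Dynkin, extended Dynkin, and neither correspond to finite, tame (to be shown), and wild (to be shown) type.

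For the direction that extended Dynkin type forces tameness, $\F Q$ is a tame hereditary algebra, and I would produce its Auslander--Reiten quiver via the Coxeter transformation $\Phi$ and the defect, the linear form given by pairing with $\delta$. The indecomposables partition into preprojective modules (negative defect), preinjective modules (positive defect), and regular modules (zero defect); the first two families are directed, each indecomposable being determined by its dimension vector, a real Schur root, so they contribute only finitely many modules in each dimension. The heart of the argument is the regular part: I would show that the regular modules form a \emph{separating} family of tubes $\{\mathcal{T}_\lambda\}_{\lambda\in\mathbb{P}^1(\F)}$, all but finitely many of which are homogeneous. In a homogeneous tube the regular simple has dimension vector $\delta$ and the indecomposable of regular length $m$ has dimension $m\delta$, so the indecomposables of a fixed dimension $m\delta$ are parametrised by $\mathbb{P}^1(\F)$ up to the finitely many exceptions coming from the non-homogeneous tubes. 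This exhibits the required one-parameter families and yields tameness. For $\tilde{A}_1$ (the Kronecker quiver) the tube structure is the classical pencil-of-matrices classification, and in general I would reduce to it through the uniserial hereditary structure of the regular subcategory.

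For the converse, suppose $\bar Q$ is neither Dynkin nor extended Dynkin, so $q_{\bar Q}$ is indefinite. The slick route is Kac's theorem: there is then an imaginary positive root $\alpha$ with $q_{\bar Q}(\alpha)<0$, and the indecomposables of dimension vector $\alpha$ form a family of $1-q_{\bar Q}(\alpha)\geq 2$ parameters, which is incompatible with tameness. A more elementary route, matching the cited original proofs, uses monotonicity of wildness under full subquivers: if $Q'$ is the full subquiver on a subset of vertices, then $\F Q'$ is the quotient of $\F Q$ by the ideal generated by the idempotents at the deleted vertices, so $\F Q'$-\textsf{mod} embeds as a full subcategory of $\F Q$-\textsf{mod} and wildness of $\F Q'$ forces wildness of $\F Q$. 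By the combinatorial lemma $\bar Q$ properly contains an extended Dynkin diagram, and one reduces to the finitely many minimal indefinite graphs, for each of which wildness is exhibited directly, most efficiently by producing a full exact embedding of the representations of the strictly wild $3$-Kronecker quiver through an explicit bimodule.

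The main obstacle is the tame direction: establishing that the regular modules genuinely assemble into a separating tubular family parametrised by $\mathbb{P}^1(\F)$ requires the full Auslander--Reiten machinery for tame hereditary algebras, namely the Coxeter functors, the defect, and the separation property of the regular subcategory. By contrast the wild direction is comparatively clean, being immediate from the indefiniteness of $q_{\bar Q}$ via Kac's parameter count, or from monotonicity together with the reduction to minimal graphs.
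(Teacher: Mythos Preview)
The paper does not prove this statement: it is quoted as a background result with citations to \cite{Dlab/Ringel:1976a,D-F} and used as a black box later in \Cref{S: rep type}. There is therefore no proof in the paper to compare your proposal against. Your sketch is a faithful outline of the classical argument found in those references (Tits form trichotomy, Coxeter transformation and defect, tubular structure of the regular part for the tame direction; indefiniteness plus either Kac's parameter count or reduction to minimal wild subquivers for the wild direction), so as a standalone account it is on the right track, but for the purposes of this paper no proof is expected or supplied.
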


A variation of this gives a criterion for the representation type of an algebra in some special cases,  using the separated quiver of its Ext quiver. We recall the construction.
Suppose that $Q$ is a quiver with  vertices  $\{1, 2, \ldots, r\}$. The separated quiver $Q'$ of $Q$ is defined to be the quiver with   vertices $\{ 1, 2, \ldots, r, 1', 2', \ldots, r'\}$ and arrows
$i\to j'$ for any  arrow $i\to j$ in $Q$.
It was proved in \cite[X,  Theorem 2.4]{Auslander/Reiten/Smalo:1995a} that when $\rad^2(A)=0$, the algebra $A$ is stably equivalent to an algebra whose
Ext quiver is $Q'$ where $Q$ is the Ext quiver of $A$.  A general result of Krause \cite{Kr} implies that stably equivalent algebras have
the same representation type, and we can deduce the following.

\begin{thm}\label{T:separatedquiver} Let $A$ be an  $\F$-algebra and suppose that $\rad^2(A)=0$.
\begin{enumerate}[(i)]
\item {{\cite[X, Theorem 2.6]{Auslander/Reiten/Smalo:1995a}}} Then $A$ has finite type if and only if the separated quiver of $Q_A$ is a finite union of Dynkin diagrams.
\item {\cite{Dlab/Ringel:1976a,D-F}} Then $A$ is tame if and only if the separated quiver of $Q_A$ is a finite union of Dynkin diagrams and (at least one) extended Dynkin diagrams.
\end{enumerate}
\end{thm}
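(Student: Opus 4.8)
The plan is to assemble the statement from the three ingredients already recorded above: the stable equivalence of \cite[X, Theorem 2.4]{Auslander/Reiten/Smalo:1995a}, Krause's invariance \cite{Kr}, and the classifications of hereditary type in \Cref{T: Gab,T:tame}. The whole point is that the radical-square-zero hypothesis lets one replace $A$ by a hereditary algebra whose quiver is explicitly the separated quiver, after which the answer is read off from Gabriel and Dlab--Ringel.

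First I would exploit the hypothesis $\rad^2(A)=0$. By \cite[X, Theorem 2.4]{Auslander/Reiten/Smalo:1995a}, $A$ is stably equivalent to a hereditary algebra $B$ whose Ext quiver is the separated quiver $Q'$ of $Q_A$. The key structural observation is that $Q'$ is bipartite: every arrow runs from an unprimed vertex $i$ to a primed vertex $j'$, and no arrow leaves a primed vertex. Hence $Q'$ admits no path of length at least $2$ and, a fortiori, no oriented cycle, so $\F Q'$ is finite-dimensional with radical square zero. Since $B$ is hereditary with Ext quiver $Q'$ it is Morita equivalent to the path algebra $\F Q'$ (in the basic case this is exactly \Cref{T: path alg rel}, whose admissible ideal must vanish as there are no relations of length $\geq 2$). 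As representation type is a Morita invariant, it suffices to determine the type of $\F Q'$ and transport it back to $A$.

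Next I would carry the type across the stable equivalence. For finite type I would note that a stable equivalence preserves the number of non-isomorphic non-projective indecomposables, while the number of indecomposable projectives is always finite (one per vertex); so $A$ has finite type exactly when $B$, and hence $\F Q'$, does. For the tame/wild dichotomy I would instead invoke Krause \cite{Kr}, giving directly that $A$ and $B$ share their representation type. In both regimes the problem is thereby reduced to the hereditary algebra $\F Q'$.

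Finally I would read off the answer from the classification. Writing $Q'$ as the disjoint union of its connected components, $\F Q'$ is the product of the corresponding connected path algebras, and the type of a product is the maximum of the types of its factors. By \Cref{T: Gab}, $\F Q'$ is of finite type precisely when every component is a Dynkin diagram, which is (i); applying \Cref{T:tame} component by component, $\F Q'$ is tame precisely when every component is Dynkin or extended Dynkin with at least one extended Dynkin component (otherwise all components are Dynkin and the algebra is already of finite type, while a single non-(extended-)Dynkin component makes the product wild), which is (ii). The only genuinely delicate point, and the one I would justify most carefully, is the finite-type transport through stable equivalence: one must confirm that discarding the finitely many projective indecomposables cannot turn an infinite family finite or vice versa. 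Everything else is a formal consequence of the quoted theorems.
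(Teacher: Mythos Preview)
Your proposal is correct and follows exactly the approach the paper indicates: the paper does not give a standalone proof but records, in the paragraph preceding the theorem, precisely the chain you spell out—stable equivalence with the hereditary algebra on the separated quiver via \cite[X, Theorem 2.4]{Auslander/Reiten/Smalo:1995a}, invariance of representation type under stable equivalence via Krause \cite{Kr}, and then the hereditary classifications of \Cref{T: Gab,T:tame}. Your additional remarks (bipartiteness of $Q'$, absence of oriented cycles, and the direct counting argument for finite type) are correct elaborations but not departures from the paper's route.
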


We end the subsection with the following two lemmas which we shall need in Section \ref{S: rep type}.

\begin{lem}\label{L: surj alg} Let $A$ and $B$ be $F$-algebras.
\begin{enumerate}[(i)]
  \item Suppose that there is a surjective algebra homomorphism $\phi:A\to B$. If $B$ has infinite (respectively, wild) type then $A$ has infinite (respectively, wild) type.
  \item {\cite[Proposition 2.2]{ESY21}} If there is an idempotent $e\in A$ such that $eAe$ is wild, then $A$ is also wild.
  \item {\cite[Proposition 1.2]{EHIS}} If $B=A/\soc(P)$ where $P$ is both a projective and injective $A$-module, then
$A$ and $B$ have the same representation type.
\end{enumerate}
\end{lem}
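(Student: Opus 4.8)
The plan is to prove part (i) directly; parts (ii) and (iii) are quoted verbatim from \cite{ESY21} and \cite{EHIS}, so nothing is needed there. The whole of (i) rests on the \emph{inflation} functor along $\phi$: since $\phi\colon A\to B$ is a surjection of $\F$-algebras, any $B$-module $M$ carries an $A$-action by $a\cdot m:=\phi(a)m$, giving an $A$-module which I write ${}_\phi M$, and this assignment is plainly an exact $\F$-linear functor $\mathrm{Inf}_\phi\colon\modcat{B}\to\modcat{A}$.

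The first step is to show $\mathrm{Inf}_\phi$ is fully faithful. For $B$-modules $M,N$ and an $\F$-linear map $f\colon M\to N$, being $A$-linear means $f(\phi(a)m)=\phi(a)f(m)$ for all $a\in A$; as $\phi(A)=B$ by surjectivity, this is the very same condition as $B$-linearity, so $\Hom_A({}_\phi M,{}_\phi N)=\Hom_B(M,N)$. Full faithfulness then formally yields two consequences I would use repeatedly: an algebra isomorphism $\mathrm{End}_A({}_\phi M)\cong\mathrm{End}_B(M)$, whence ${}_\phi M$ is indecomposable whenever $M$ is (no nontrivial idempotents appear on either side); and the reflection of isomorphism classes, i.e. ${}_\phi M\cong{}_\phi N$ forces $M\cong N$. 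Granting these, the infinite-type assertion is immediate: if $B$ admits infinitely many pairwise non-isomorphic indecomposables $M_1,M_2,\dots$, then ${}_\phi M_1,{}_\phi M_2,\dots$ are infinitely many pairwise non-isomorphic indecomposable $A$-modules, so $A$ is of infinite type.

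For the wild case I would transport the witnessing bimodule through $\phi$. Suppose $B$ is wild via a finitely generated $(B\text{-}\F\langle x,y\rangle)$-bimodule $N$, free as a right $\F\langle x,y\rangle$-module, such that $N\otimes_{\F\langle x,y\rangle}-$ preserves indecomposability and reflects isomorphism classes. Viewing $N$ as an $(A\text{-}\F\langle x,y\rangle)$-bimodule ${}_\phi N$ with $A$ acting through $\phi$ keeps it finitely generated over $A$ (again by surjectivity) and free as a right $\F\langle x,y\rangle$-module, and a direct check of the actions shows that the functor ${}_\phi N\otimes_{\F\langle x,y\rangle}-$ is exactly the composite $\mathrm{Inf}_\phi\circ(N\otimes_{\F\langle x,y\rangle}-)$. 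Since each factor preserves indecomposability and reflects isomorphism classes, so does the composite, and hence ${}_\phi N$ witnesses that $A$ is wild. The only genuinely load-bearing point in the whole argument is the identity $\Hom_A({}_\phi M,{}_\phi N)=\Hom_B(M,N)$, which is precisely where surjectivity of $\phi$ enters; everything else is formal functoriality, so I expect no serious obstacle.
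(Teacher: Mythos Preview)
Your proposal is correct and follows essentially the same approach as the paper: the paper also observes that part (i) is elementary because $B\cong A/I$ and viewing $B$-modules as $A$-modules gives a functor that preserves indecomposability and reflects isomorphisms, with parts (ii) and (iii) simply cited. Your write-up is more detailed (in particular your explicit handling of the wild case via transporting the bimodule), but the underlying idea is identical.
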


The first part of this lemma is elementary: $B$ is isomorphic
to $A/I$ for some ideal $I$ of $A$ and therefore viewing $B$-modules
as $A$-modules gives a functor from the category of $B$-modules to the category of $A$-modules which preserves
indecomposability and reflects isomorphisms. Moreover, the Ext quiver of $B$ is a subquiver of the Ext quiver of $A$ as follows.
First,
simple
$B$-modules are $A$-modules and remain simple. Therefore the vertices of the Ext quiver
of $B$ can be identified with vertices of the Ext quiver of $A$ induced by the surjection. Next, if there are $n$ arrows in the Ext quiver of $B$ from $i$ to $j$ then
there is  an indecomposable $B$ module of radical  length $2$ with socle the direct sum of $n$ copies of $S_j$ and top $S_i$ as we have seen in Subsection \ref{SS:basic algebra}. This is still an indecomposable $A$-module of radical length
$2$ and therefore gives rise to $n$ arrows in the Ext quiver of $A$ from $i$ to $j$ under the identification.  We have therefore:

\begin{lem} \label{L:quiver} Suppose that there is a surjective algebra
	homomorphism $\phi: A\to B$. Then the Ext quiver of $B$ is a subquiver
	of the Ext quiver of $A$.
\end{lem}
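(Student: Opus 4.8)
The plan is to exploit the inflation functor along $\phi$ together with the description of the Ext quiver recalled in \Cref{SS:basic algebra}. Writing $I=\ker\phi$, surjectivity identifies $B$ with $A/I$, so every $B$-module becomes an $A$-module on which $I$ acts as zero, and this inflation is an exact, fully faithful functor $\text{$B$-mod}\to\text{$A$-mod}$. I would first settle the vertices: a simple $B$-module $S$, viewed over $A$, has its $A$-submodules coinciding with its $B$-submodules (since $I$ annihilates it), so it stays simple, and inflation carries pairwise non-isomorphic simples to pairwise non-isomorphic simples. This yields an injection from the vertex set of the Ext quiver of $B$ into that of $A$, each simple $B$-module being identified with its inflation.

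For the arrows, I would invoke the characterisation that $\dim\Ext^1_A(S_i,S_j)=m$ is the largest integer such that there is an indecomposable $A$-module of radical length two with top $S_i$ and socle $S_j^{\oplus m}$. Suppose the Ext quiver of $B$ has $n$ arrows $i\to j$, equivalently $\dim\Ext^1_B(S_i,S_j)=n$; then there is an indecomposable $B$-module $M$ of radical length two with top $S_i$ and socle $S_j^{\oplus n}$. Inflating $M$ to an $A$-module, I would check that the relevant invariants persist: because the $A$-action factors through $B$, one has $\mathrm{End}_A(M)=\mathrm{End}_B(M)$, so $M$ remains indecomposable; and since $\phi(\rad(A))=\rad(B)$, the radical series of $M$ over $A$ agrees with its radical series over $B$, so $M$ still has radical length two with the same top and socle. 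Hence the Ext quiver of $A$ carries at least $n$ arrows $i\to j$, and, identifying vertices as above, the Ext quiver of $B$ is exhibited as a subquiver of that of $A$.

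An alternative, slightly cleaner route avoids indecomposable modules altogether: the inflation functor induces a natural $\F$-linear map $\Ext^1_B(S_i,S_j)\to\Ext^1_A(S_i,S_j)$, sending a short exact sequence of $B$-modules to the same sequence regarded over $A$. This map is injective, because any $A$-linear section of an extension of $B$-modules is automatically $B$-linear (the $A$-action on such modules factors through $B$), so an extension splitting over $A$ already splits over $B$. Consequently $\dim\Ext^1_B(S_i,S_j)\le\dim\Ext^1_A(S_i,S_j)$ for all $i,j$, which is precisely the subquiver assertion.

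The only genuinely delicate point is the preservation of module-theoretic invariants under inflation in the first approach, namely that indecomposability, radical length, and the top/socle structure are unchanged when the extension problem is transported from $B$ to $A$. This is where I would be most careful, although it becomes formal once one observes that $\phi(\rad(A))=\rad(B)$ and that $A$- and $B$-linear maps between inflated modules coincide.
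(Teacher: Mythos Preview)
Your proposal is correct and your first approach is essentially identical to the paper's: the paper also identifies $B\cong A/I$, observes that simple $B$-modules remain simple as $A$-modules, and for arrows invokes exactly the indecomposable radical-length-two module with top $S_i$ and socle $S_j^{\oplus n}$, noting that it remains such when inflated to $A$. You supply more justification for the persistence of indecomposability and radical length (via $\phi(\rad(A))=\rad(B)$) than the paper does, and your second approach via the injection $\Ext^1_B(S_i,S_j)\hookrightarrow\Ext^1_A(S_i,S_j)$ is a clean alternative that the paper does not give.
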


\bigskip
Part (iii) of Lemma \ref{L: surj alg} was used in \cite{EHIS}, in the context of radical embeddings, that is, if $A$ is a subalgebra of an algebra  $B$ then the inclusion $A \to B$ is
a radical embedding provided  $\rad(A)= \rad(B)$.  In this context, if the algebra $B$ has finite type then, roughly speaking, the main result shows that $A$ is `not too far away'.
The following shows that sometimes also $A$ has finite type, and we will apply this later.

\begin{lem}\label{L:radicalemb} Suppose that $A$ is a subalgebra of $B$ such that  $\rad(A)=\rad(B)$.  Assume that
  $B \cong A\oplus (B/A)$ as a left $A$-modules. If $B$ has finite type then $A$ also has finite type.
\end{lem}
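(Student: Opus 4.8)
The plan is to compare the module categories of $A$ and $B$ through the restriction functor $\mathrm{Res}\colon B\text{-mod}\to A\text{-mod}$ and its left adjoint, the induction functor $\mathrm{Ind}=B\otimes_A-$. Since $B$ has finite type, there are only finitely many indecomposable $B$-modules, and each of their restrictions splits, by Krull--Schmidt, into finitely many indecomposable $A$-modules. Hence it suffices to prove the \emph{summand property}: every indecomposable $A$-module occurs as a direct summand of $\mathrm{Res}(N)$ for some $B$-module $N$. Indeed, once this is known, every indecomposable $A$-module is one of the finitely many indecomposable summands appearing in the restrictions of the (finitely many) indecomposable $B$-modules, so the list of indecomposable $A$-modules is finite and $A$ has finite type.

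First I would record the structural consequence of combining the two hypotheses, as this is what turns a generic inclusion into a usable radical embedding. For a left $A$-module $X$ one has $\rad_A(X)=\rad(A)X$; applied to $X={}_AB$ this gives $\rad_A({}_AB)=\rad(A)\cdot B=\rad(B)\cdot B=\rad(B)=\rad(A)=\rad_A({}_AA)$, where the middle equalities use $\rad A=\rad B$ together with $\rad(B)\cdot B=\rad(B)$. Taking radicals in the assumed decomposition ${}_AB\cong {}_AA\oplus {}_A(B/A)$ then forces $\rad_A(B/A)=0$; that is, $B/A$ is a \emph{semisimple} $A$-module. Symmetrically, $(B/A)\cdot\rad(A)\subseteq B\cdot\rad(B)=\rad(B)\subseteq A$, so $B/A$ is annihilated by $\rad A$ on the right as well, and $B/A$ is in fact a semisimple $A$-bimodule.

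To produce the required $N$ I would take $N=\mathrm{Ind}(M)=B\otimes_A M$ and show that the unit of the adjunction $\eta_M\colon M\to \mathrm{Res}\,\mathrm{Ind}(M)=B\otimes_A M$, $m\mapsto 1\otimes m$, is a split monomorphism of $A$-modules; equivalently, that $\mathrm{Ind}$ is a separable functor. A left $A$-linear retraction $\rho\colon B\otimes_A M\to M$ with $\rho\circ\eta_M=\mathrm{id}_M$ then exhibits $M$ as a summand of $\mathrm{Res}(N)$, and, decomposing $N$ into indecomposable $B$-modules, $M$ is a summand of the restriction of one of them. This is precisely the step in which the splitting hypothesis $B\cong A\oplus (B/A)$ must be converted into the separability datum for induction.

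The hard part will be the construction of this $A$-linear retraction $\rho$. The naive candidate $b\otimes m\mapsto \pi(b)\,m$, built from a splitting $\pi\colon B\to A$ of the inclusion, is well defined on $B\otimes_A M$ only when $\pi$ is right $A$-linear and is left $A$-linear only when $\pi$ is left $A$-linear, so a purely one-sided splitting does not by itself yield $\rho$. The crux is therefore to upgrade, or to use in place of, the given decomposition: one must exploit that $B/A$ is a semisimple $A$-bimodule killed on both sides by $\rad A$ to manufacture a bimodule-level splitting of $A\hookrightarrow B$ and hence the separability of $\mathrm{Ind}$. Granting this, the summand property holds for every $M$, the dimensions of indecomposable $A$-modules are bounded by the largest dimension of an indecomposable $B$-module, and $A$ has finite type.
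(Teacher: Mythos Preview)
Your overall strategy --- show that every indecomposable $A$-module is a direct summand of the restriction of some $B$-module --- is exactly what the paper does. The divergence is in which adjoint of restriction you choose. You use the \emph{left} adjoint $\mathrm{Ind}=B\otimes_A-$, while the paper uses the \emph{right} adjoint $X^{-}:=\Hom_A(B,X)$, and here the sidedness of the hypothesis matters. You identify the issue yourself: for the unit $\eta_M\colon M\to B\otimes_A M$ to split, the candidate retraction $b\otimes m\mapsto \pi(b)m$ needs $\pi\colon B\to A$ to be right $A$-linear (for well-definedness on $B\otimes_A M$) and left $A$-linear (for $A$-linearity of the retraction), i.e.\ a bimodule splitting. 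You then assert that such a splitting can be ``manufactured'' from the fact that $B/A$ is a semisimple $A$-bimodule. This is the gap: the bimodule extension $0\to A\to B\to B/A\to 0$ need not split merely because the quotient is semisimple --- over $A\otimes_\F A^{\mathrm{op}}$ semisimple modules are not injective in general, so there is no reason for $\Ext^1_{A\otimes_\F A^{\mathrm{op}}}(B/A,A)$ to vanish. Nothing in the hypotheses provides a right (or two-sided) complement to $A$ in $B$, and without one even the injectivity of $\eta_M$ is unclear, since that would require $B$ to be flat as a right $A$-module.

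The paper avoids this by working with coinduction. The counit $\epsilon\colon\Hom_A(B,X)\to X$, $f\mapsto f(1)$, is left $A$-linear, and the \emph{left} $A$-module decomposition $B\cong A\oplus(B/A)$ is precisely what makes $\epsilon$ surjective; its kernel $\Hom_A(B/A,X)$ is semisimple because $\rad(A)$ annihilates $B/A$ on the right. The remaining point --- that in fact $X^{-}|_A\cong X\oplus W$ with $W$ semisimple as $A$-modules --- is where the paper invokes \cite[Lemma~2.3]{EHIS}. From there one picks an indecomposable $B$-summand $Y$ of $X^{-}$ whose restriction contains $X$; since the only non-simple indecomposable summand of $X^{-}|_A$ is $X$ itself, the assignment $X\mapsto Y$ is injective on isomorphism classes of non-simple indecomposables, and finiteness of $A$ follows. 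If you want to repair your outline, switch from $B\otimes_A-$ to $\Hom_A(B,-)$: then the given one-sided splitting plays its natural role, and the hard step becomes the cited lemma rather than an unavailable bimodule complement.
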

\begin{proof} It is enough to show that $A$ has finitely many indecomposable modules which are not simple. Take such a module $X$. Then
$X^-:= {\rm Hom}_A(B, X)$ is a $B$-module.
Since $B= A\oplus (B/A)$  we have, as $A$-modules,
$$X^- \cong {\rm Hom}_A(A, X)\oplus {\rm Hom}_A(B/A, X) \cong X \oplus W$$
where $W$ is semisimple (see  \cite[Lemma 2.3]{EHIS}).

By the Krull-Schmidt theorem, there is an indecomposable $B$-module $Y$, which is a summand of $X^-$ and the restriction $Y_A$ of $Y$ to $A$ contains $X$ as a direct summand. In fact, $Y_A= X\oplus W'$ with $W'$ semisimple.  Since $X$ is not simple, the module $Y$ is unique and its restriction to $A$ has a unique non-simple indecomposable direct summand. So we have a map from indecomposable non-simple $A$-modules to indecomposable $B$-modules given by  $X \mapsto  Y$. By the construction, up to isomorphism, the map must be injective as follows. If $X_1\mapsto Y$ and $X_2\mapsto Y$, then the restriction of $Y$ to $A$ contains both indecomposable non-simple summands $X_1$ and $X_2$. But there is a unique  non-simple summand and hence $X_1\cong X_2$. Since the set of such non-isomorphic indecomposable $B$-modules $Y$ is finite, the set of non-isomorphic indecomposable non-simple $A$-modules $X$ is finite.
\end{proof}

\subsection{Descent algebras of type $\A$}\label{SS: descent algebra}  In this section, the field $\F$ is  arbitrary.
We describe the descent algebra of the symmetric group $\sym{n}$.  The composition of  permutations is read from left to right.

A composition $q$ of $n$ is a finite sequence $(q_1,\ldots,q_k)$ in $\NN$ such that $\sum_{i=1}^kq_i=n$. In this case, we write $q\vDash n$\index{$\vDash$} and $\ell(q)=k$.
The Young subgroup associated to $q$ is the group
$\sym{q}$, which is
the direct
product
$\sym{q_1}\times \sym{q_2}\times \cdots \times \sym{q_k}$ in $\sym{n}$. More precisely, it consists of the permutations whose support sets are
\[[1, q_1], [q_1+1, q_1+q_2], \ldots, [q_1 + \cdots + q_{k-1}, n].\]
Each right coset of $\sym{q}$ in $\sym{n}$ contains a unique permutation of minimal length. Define $\Xi^{q}$ to be the sum in the integral group ring $\Z \sym{n}$ of all these
minimal coset representatives of $\sym{q}$ in $\sym{n}$. For example, $\Xi^{(n)}$ is the identity of $\sym{n}$, and $\Xi^{(1, 1, \ldots, 1)}$ is the sum of all permutations in $\sym{n}$. The result by Solomon \cite{Solomon:1976a} applied to the $\A$ type case asserts that  the $\Z$-linear span
$\Des{n}{\Z}$ of the elements $\Xi^{q}$ as $q$ varies through the compositions
of $n$, is a subring of $\Z \sym{n}$ of rank $2^{n-1}$. The subalgebra $\Des{n}{\Z}$ is called a descent algebra (of type $\mathbb{A}$). 
There is an explicit formula for the multiplication constants for the $\Z$-basis $\{\Xi^q:q\vDash n\}$ of $\Des{n}{\Z}$, which is also
in connection with the Young permutation characters. We will describe these in detail.
Furthermore, we may replace $\Z$ as a coefficient ring by any commutative ring $\cO$
with identity, and get the algebra $\Des{n}{\cO}$, which is $\cO$-free  of rank $2^{n-1}$.

\bigskip

\subsection{The multiplication and connection with Young permutation modules}

In order to describe the multiplication constants and consequences, we need some notation.
The composition $q$ is a partition if $q_1\geq q_2\geq \cdots\geq q_k$ and we write $q\vdash n$.  We denote the set of all compositions and partitions of $n$ by $\C(n)$ and $\P(n)$ respectively\index{$\C(n)$}\index{$\P(n)$}. A partition $\lambda$ is $p$-regular if, for each $i\in\NN$, the number of occurrence of $i$ as a part of $\lambda$ is not more than $p-1$. The set of $p$-regular partitions of $n$ is denoted as $\pReg(n)$. Obviously, $\Lambda^+_\infty(n)=\P(n)$.
To simplify the notation, if no confusion, we remove all parentheses and commas for the notation of composition. For example, $(2,1,1)$ is replaced with $21^2$.

Let $q,r\in\C(n)$. If the parts of $q$ can be rearranged to $r$ then we write $q\approx r$. Clearly, this is an equivalence relation and the equivalence classes are represented by $\P(n)$. As such, we write $\lambda(q)$ for the partition such that $q\approx \lambda(q)$. The composition $r$ is a (strong) refinement of $q$ if there are integers $0=i_0<i_1<\cdots<i_k$ where $k=\ell(q)$ such that, for each $j\in [1,k]$, \[r^{(j)}:=(r_{i_{j-1}+1},r_{i_{j-1}+2},\ldots,r_{i_j})\vDash q_j,\] and we denote it as $r\sref q$. On the other hand, the composition $r$ is a weak refinement of $q$ if there is a rearrangement of $r$ which is a refinement of $q$, i.e., $r\approx s\sref q$ for some $s$ and we denote this as $r\wref q$.

For an $(m\times n)$-matrix $A$, we denote the $i$th row and $j$th column of $A$ by $r_i(A)$ and $c_j(A)$ respectively, i.e.,
\begin{align*}
  r_i(A)&=(A_{i1},\ldots,A_{in}),\index{$r_i(A)$}\\
  c_j(A)&=(A_{1j},\ldots,A_{mj})\index{$c_j(A)$}.
\end{align*} Let $q,r,s\in \C(n)$ and $N_{r,q}^s$ \index{$N_{r,q}^s$}be the set consisting of all the $(\ell(r)\times \ell(q))$-matrices $A$ with entries in $\NN_0$ such that
\begin{enumerate}[(a)]
\item for each $i\in [1,\ell(r)]$, $\comp{r_i(A)}\vDash r_i$,
\item for each $j\in [1,\ell(q)]$, $\comp{c_j(A)}\vDash q_j$, and
\item $s=\comp{r_1(A)}\cont\cdots\cont \comp{r_{\ell(q)}(A)}$ where $\cont$ denotes concatenation,
\end{enumerate} moreover, $\comp{\delta}$ \index{$\comp{\delta}$}denotes the composition obtained from a sequence $\delta$ in $\NN_0$ by deleting all its zero entries.

The following theorem gives the multiplication constants for the type $\A$ case.

\begin{thm}[{\cite[Proposition 1.1]{Garsia/Reutenauer:1989a}}]\label{T: GR 1.1} For $q,r\in\C(n)$, we have \[\Xi^r\Xi^q=\sum_{s\in\C(n)}|N^s_{r,q}|\Xi^s.\] 
\end{thm}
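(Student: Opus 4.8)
The plan is to prove the multiplication formula by interpreting the product $\Xi^r\Xi^q$ combinatorially in the group algebra $\Z\sym{n}$ and then matching terms against the coset-sum basis. Recall that $\Xi^q$ is the sum of the minimal-length representatives of the right cosets $\sym{q}\backslash\sym{n}$; equivalently, working on the level of the permutation action, $\Xi^q$ records exactly the data of which right coset of the Young subgroup $\sym{q}$ a permutation lies in. The key principle is that the descent algebra multiplication mirrors the Mackey-type decomposition for Young permutation modules, so I would first translate the purely algebraic product into the language of double cosets and the associated matrices.

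First I would fix notation for the minimal coset representatives. For a composition $q$, a permutation $w\in\sym{n}$ is the minimal representative of its coset $\sym{q}w$ precisely when $w^{-1}$ is increasing on each of the intervals $[q_1+\cdots+q_{i-1}+1, q_1+\cdots+q_i]$; I would make this descent-set characterisation explicit. Then $\Xi^r\Xi^q$ expands as a sum over pairs $(u,v)$ of minimal representatives for $\sym{r}$ and $\sym{q}$ respectively, and I would collect the resulting products $uv$ according to which coset representative $\Xi^s$ they eventually contribute to. The essential point is that each product $uv$, after extracting its minimal representative, is governed by the way the two interval partitions (the one cutting $[1,n]$ into blocks of sizes $r_i$ and the one cutting into blocks of sizes $q_j$) intersect. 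The intersection pattern of these two set-partitions of $[1,n]$ is encoded precisely by a nonnegative integer matrix $A$ whose $(i,j)$ entry counts the overlap, and the defining conditions (a) and (b) of $N^s_{r,q}$ are exactly the statements that the row sums give the parts of $r$ and the column sums give the parts of $q$.

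The central step is then to set up a bijection between the pairs $(u,v)$ of minimal coset representatives whose product has minimal representative equal to a fixed $w$ with $\sym{s}w$, and the matrices $A\in N^s_{r,q}$ satisfying condition (c), which reads off the target composition $s$ from the concatenated nonzero rows of $A$. I would argue that the overlap matrix $A$ determines, and is determined by, the way the two flags of intervals refine one another; reading the block sizes row by row and deleting empty blocks yields exactly the composition $s=\comp{r_1(A)}\cont\cdots\cont\comp{r_{\ell(r)}(A)}$, which is why the concatenation appears. Summing over all $w$ in a given coset class then shows that the coefficient of $\Xi^s$ is exactly the number of such matrices, namely $|N^s_{r,q}|$. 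This would complete the identification $\Xi^r\Xi^q=\sum_{s\in\C(n)}|N^s_{r,q}|\Xi^s$.

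The main obstacle I anticipate is the bookkeeping in the core bijection, specifically showing that the pair $(u,v)$ of minimal representatives is faithfully recovered from the overlap matrix together with the internal orderings forced by the minimality (increasing) conditions, and verifying that each matrix in $N^s_{r,q}$ arises from exactly one product $uv$ contributing to $\Xi^s$. One has to be careful that the minimal-representative condition removes all the internal freedom within each overlap block, so that the only combinatorial data left is the matrix $A$ itself; this is where the conditions (a), (b) and the zero-deletion convention in (c) must be checked to line up exactly. Since the formula is classical and this argument is essentially the standard Mackey/double-coset counting for Solomon's descent algebra, I would lean on the interval-refinement description to keep the bijection transparent rather than computing with explicit permutations.
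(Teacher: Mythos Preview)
The paper does not give its own proof of this statement; it is quoted from Garsia--Reutenauer. Your overall plan---a bijective argument matching products of minimal coset representatives with matrices in $N^s_{r,q}$---is indeed the standard route taken there. But the central combinatorial object is misidentified, and without correcting this the bijection you describe cannot exist.

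You say the matrix $A$ records ``the way the two interval partitions (the one cutting $[1,n]$ into blocks of sizes $r_i$ and the one cutting into blocks of sizes $q_j$) intersect.'' Those two interval partitions are fixed once $r$ and $q$ are chosen, so their intersection is a \emph{single} matrix, independent of the pair $(u,v)$; one would never see more than one element of $\bigcup_s N^s_{r,q}$ this way. The matrix that actually controls the product depends on $u$: writing $I^r_i$, $I^q_j$ for the interval blocks, one takes $A_{ij}=|u^{-1}(I^r_i)\cap I^q_j|$, i.e.\ the intersection of the \emph{non-interval} set partition $\{u^{-1}(I^r_i)\}_i$ attached to the $r$-shuffle $u$ with the fixed interval blocks of $q$. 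With this $A$ one checks that $uv$ is itself a minimal representative for $\sym{s}$, where $s=\comp{r_1(A)}\cont\cdots\cont\comp{r_{\ell(r)}(A)}$, and the correct bijection is
\[
(u,v)\ \longmapsto\ (A,\, uv)
\]
from $\{\text{$r$-shuffles}\}\times\{\text{$q$-shuffles}\}$ onto $\bigsqcup_{s\vDash n} N^s_{r,q}\times\{\text{$s$-shuffles}\}$, not a bijection between pairs with product in a fixed coset $\sym{s}w$ and $N^s_{r,q}$ (the latter cannot work, since a given $w$ lies in $\Xi^s$ for many $s$). Once the matrix is correctly tied to $u$, the remainder of your outline---checking that minimality of $u$ and $v$ forces $uv$ to be $s$-minimal, and that the inverse map recovers first $v$ and then $u=wv^{-1}$ uniquely---goes through and is exactly the Garsia--Reutenauer argument.
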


We now describe the connection of Theorem \ref{T: GR 1.1} with permutation modules.

Recall that, for any element $x$ of finite order of a group $G$, we have a unique decomposition $x=yz=zy$ for some $y,z\in G$ such that the orders of $y$ and $z$ are a power of $p$ and prime to $p$ respectively. We call $z$ the $p'$-part of $x$. The conjugacy class of $\sym{n}$ labelled by $\lambda\in\P(n)$ is denoted by $\ccl{\lambda}$.  Any partitions $\lambda,\mu\in\P(n)$ are $p$-equivalent if the $p'$-parts of any $\sigma\in\ccl{\lambda}$ and $\tau\in\ccl{\mu}$ are conjugate in $\sym{n}$. In this case, we write $\lambda\sim_p\mu$  for the equivalence relation. Notice that $\lambda\sim_\infty\mu$ if and only if $\lambda=\mu$. The $p$-equivalence classes of $\sym{n}$ are represented by $\pReg(n)$. For each $\lambda\in\pReg(n)$, the corresponding $p$-conjugacy class $\ccl{\lambda,p}$ is the union of conjugacy classes of the form $\ccl{\mu}$ such that $\mu\sim_p\lambda$. In this case, notice that, up to rearrangement of parts, $\lambda$ can be obtained from $\mu$ by successively adding some $p$ equal parts and therefore $\mu\wref \lambda$.

Let $q\vDash n$. The Young character $\varphi^{q,\Z}$ (or simply $\varphi^q$) is defined as the character of the permutation module $M^q_\Z=\Ind^{\sym{n}}_{\sym{q}}\Z$ where $\Z$ is the considered as the trivial $\Z\sym{q}$-module. In fact, for each $\mu\in\P(n)$, $\varphi^q(\mu)$ is the number of right cosets of $\sym{q}$ in $\sym{n}$ fixed by a permutation with cycle type $\mu$, where we have identified $\mu$ with the conjugacy class $\ccl{\mu}$ of $\sym{n}$. Therefore, $\varphi^q=\varphi^{q'}$ if $q\approx q'$. Also, we denote $\varphi^{q,\cO}$ for the $\cO$-valued Young character, that is, for any $\mu\in\P(n)$, \[\varphi^{q,\cO}(\mu)=\varphi^q(\mu)\cdot 1_{\cO}\in \cO.\]  We have the following lemma.

\begin{lem}\label{L: phi nonzero} Let $q\vDash n$ and $\mu\in\P(n)$.
\begin{enumerate}[(i)]
  \item If $\varphi^q(\mu)\neq 0$ then $\mu\wref q$.
  \item {\cite[Lemma 3.1]{Lim:2023a}} If $\mu\in\P_p(n)$, we have $\varphi^{\mu,\F}(\mu)\neq 0$.
\end{enumerate}
\end{lem}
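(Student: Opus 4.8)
The plan is to prove part (i) directly from the coset-counting description of the Young character recalled just before the lemma, and to obtain part (ii) from the cited reference \cite[Lemma 3.1]{Lim:2023a}. For (i), I would fix a permutation $\sigma$ of cycle type $\mu$ and use that $\varphi^q(\mu)$ equals the number of right cosets $\sym{q}\pi$ in $\sym{n}$ that are fixed by $\sigma$ under right multiplication. Assuming $\varphi^q(\mu)\neq 0$, at least one such coset exists, say $\sym{q}\pi$ with $\sym{q}\pi\sigma=\sym{q}\pi$. Reading composition from left to right, this is equivalent to $\pi\sigma\pi^{-1}\in\sym{q}$, so $\sigma$ is conjugate to some $\tau\in\sym{q}$; in particular $\tau$ has cycle type $\mu$ as well.

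The key structural step is to read off the cycle type of $\tau$ from the block decomposition of the Young subgroup. Since $\sym{q}=\sym{q_1}\times\cdots\times\sym{q_k}$ is the internal direct product of the symmetric groups on the blocks $[1,q_1], [q_1+1,q_1+q_2], \ldots, [q_1+\cdots+q_{k-1}+1,n]$, I would write $\tau=\tau_1\cdots\tau_k$ with each $\tau_j$ supported on the $j$th block. Each $\tau_j$ then has cycle type a partition $\mu^{(j)}$ of $q_j$, and because the blocks are disjoint the cycle type $\mu$ of $\tau$ is the multiset union of the $\mu^{(j)}$. Concatenating these yields a composition $s=\mu^{(1)}\cont\cdots\cont\mu^{(k)}$ which is a rearrangement of $\mu$, so $s\approx\mu$, and which satisfies $s\sref q$ by construction since its $j$th block $\mu^{(j)}$ is a composition of $q_j$. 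By the definition of weak refinement this gives $\mu\wref q$, proving (i).

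For part (ii) I would simply invoke \cite[Lemma 3.1]{Lim:2023a}, which states exactly that $\varphi^{\mu,\F}(\mu)\neq 0$ for every $\mu\in\pReg(n)$. I do not expect a genuine obstacle in (i): the content is the elementary fact that an element of a Young subgroup respects its block decomposition, so that fixing a right coset forces $\sigma$ to be conjugate into $\sym{q}$. The only care required is in pinning down the conventions — left-to-right composition, right cosets, and the direction of conjugation — so that this reduction is literally correct; since cycle type is a conjugacy invariant, the final combinatorial conclusion is insensitive to these choices.
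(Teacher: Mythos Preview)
Your argument for part (i) is correct and is the standard elementary one: a fixed right coset yields a conjugate of $\sigma$ inside $\sym{q}$, and the block decomposition of the Young subgroup then exhibits $\mu$ as a rearrangement of a refinement of $q$. The paper does not supply its own proof here---part (i) is recorded as an elementary fact and part (ii) is attributed to \cite[Lemma~3.1]{Lim:2023a}---so there is nothing to compare against; your write-up simply fills in the omitted details in the expected way.
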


Let $\Ccl{n}{\F}$ be the $\F$-linear span of the Young characters $\varphi^{q,\F}$'s.   The following well-known identity \[\varphi^q \varphi^r=\sum_{s\in\C(n)} |N^s_{q,r}|\varphi^s,\] which comes from the Mackey formula, together with Theorem \ref{T: GR 1.1}  gives rise to the next theorem. 

\begin{thm}[{\cite{Atkinson/vanWilligenburg:1997a,Solomon:1976a}}]\label{T: Sol epi}  The $\F$-linear map \[\sol_{n,\F}:\Des{n}{\F}\to \Ccl{n}{\F}\] sending $\Xi^q$ to $\varphi^{q,\F}$ is a surjective $\F$-algebra homomorphism. Furthermore, $\ker(\sol_{n,\F})=\rad(\Des{n}{\F})$ and  is the $\F$-span of the set consisting of $\Xi^q$ such that $\lambda(q)\not\in\pReg(n)$, together with the $\Xi^q-\Xi^r$ such that $q\approx r$ with $q\neq r$. In particular, $\Des{n}{\F}$ is a basic algebra.
\end{thm}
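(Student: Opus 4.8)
The plan is to establish Theorem \ref{T: Sol epi} in three stages: first prove that $\sol_{n,\F}$ is an algebra homomorphism, then identify its kernel, and finally deduce that the quotient is semisimple so that the kernel equals the radical. The map $\sol_{n,\F}$ is $\F$-linear by construction (it sends the basis element $\Xi^q$ to $\varphi^{q,\F}$), and the two parallel multiplication formulas make the homomorphism property nearly automatic. By \Cref{T: GR 1.1}, we have $\Xi^r\Xi^q=\sum_{s}|N^s_{r,q}|\Xi^s$ in $\Des{n}{\Z}$, and the Mackey-formula identity quoted just before the theorem gives $\varphi^q\varphi^r=\sum_s|N^s_{q,r}|\varphi^s$ for the Young characters. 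Since these structure constants $|N^s_{r,q}|$ are the very same non-negative integers (reduced modulo $p$ into $\F$), applying $\sol_{n,\F}$ to a product and comparing coefficients shows $\sol_{n,\F}(\Xi^r\Xi^q)=\sol_{n,\F}(\Xi^r)\sol_{n,\F}(\Xi^q)$; surjectivity is immediate because the $\varphi^{q,\F}$ span $\Ccl{n}{\F}$ by definition.

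First I would analyse the target algebra $\Ccl{n}{\F}$. Two Young characters satisfy $\varphi^q=\varphi^r$ whenever $q\approx r$, so $\Ccl{n}{\F}$ is spanned by the distinct $\varphi^{\lambda,\F}$ for $\lambda\in\P(n)$; the dimension count reducing further to $\pReg(n)$ is the crux. Over $\F$ of characteristic $p$, the Young characters are functions on $p$-conjugacy classes rather than on all of $\P(n)$: the value $\varphi^{q,\F}(\mu)$ depends only on the $p$-equivalence class of $\mu$, because when $\mu\sim_p\lambda$ the partition $\lambda$ is obtained from $\mu$ by adding blocks of $p$ equal parts, and the resulting changes in the coset-fixed-point counts $\varphi^q(\mu)$ are divisible by $p$ and hence vanish in $\F$. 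This collapses the relevant evaluations to the set $\pReg(n)$ indexing the $p$-equivalence classes, so $\dim_\F\Ccl{n}{\F}\leq|\pReg(n)|$. For the reverse inequality I would exhibit linear independence using \Cref{L: phi nonzero}: the triangularity statement (i) says $\varphi^q(\mu)\neq 0$ forces $\mu\wref q$, while (ii) guarantees $\varphi^{\mu,\F}(\mu)\neq 0$ for $p$-regular $\mu$. Ordering $\pReg(n)$ by the weak-refinement partial order $\wref$ makes the evaluation matrix $(\varphi^{\lambda,\F}(\mu))_{\lambda,\mu\in\pReg(n)}$ unitriangular, hence invertible, so the $\varphi^{\lambda,\F}$ with $\lambda\in\pReg(n)$ are linearly independent and $\dim_\F\Ccl{n}{\F}=|\pReg(n)|$.

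Next I would pin down $\ker(\sol_{n,\F})$ as the stated span. The listed generators clearly lie in the kernel: $\Xi^q-\Xi^r$ maps to $\varphi^{q,\F}-\varphi^{r,\F}=0$ when $q\approx r$, and $\Xi^q$ with $\lambda(q)\notin\pReg(n)$ maps to a $\varphi^{\lambda(q),\F}$ that, by the character-value vanishing above, is an $\F$-linear combination of the $\varphi^{\mu,\F}$ for $p$-regular $\mu$ and contributes nothing new once we account for dimensions. A dimension argument then shows equality: the proposed spanning set has cardinality $2^{n-1}-|\pReg(n)|$ (the full composition count minus the $p$-regular partition count), which matches $\dim_\F\Des{n}{\F}-\dim_\F\Ccl{n}{\F}$; combined with the explicit containment in the kernel and the rank-nullity theorem, this forces the kernel to be exactly this span.

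Finally, to conclude that $\ker(\sol_{n,\F})=\rad(\Des{n}{\F})$ and that the algebra is basic, I would argue that $\Ccl{n}{\F}$ is commutative and semisimple, whence $\ker(\sol_{n,\F})\supseteq\rad(\Des{n}{\F})$, while the reverse inclusion follows because the kernel is spanned by nilpotent-type elements lying in the radical. The cleanest route is to observe that the unitriangular evaluation pairing realises $\Ccl{n}{\F}$ as a commutative algebra with a basis of orthogonal idempotents (a split semisimple commutative algebra, a product of copies of $\F$), so its radical is zero; then $\rad(\Des{n}{\F})\subseteq\ker(\sol_{n,\F})$ since the radical maps into the radical of the semisimple image. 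For the reverse, the simple modules of $\Des{n}{\F}$ are the $|\pReg(n)|$ one-dimensional modules pulled back along $\sol_{n,\F}$, so $\dim_\F\Des{n}{\F}/\rad(\Des{n}{\F})=|\pReg(n)|=\dim_\F\Ccl{n}{\F}$, forcing $\ker(\sol_{n,\F})=\rad(\Des{n}{\F})$ and making all simples one-dimensional, i.e. $\Des{n}{\F}$ basic. \emph{The main obstacle} is establishing the $p$-adic vanishing of $\varphi^q(\mu)$ across a $p$-equivalence class cleanly enough to get both the dimension bound on $\Ccl{n}{\F}$ and the kernel description; the combinatorics of how adding blocks of $p$ equal parts multiplies the coset-fixed-point counts by factors divisible by $p$ is where the real content sits, and \Cref{L: phi nonzero} is the tool I would lean on to keep the triangularity argument honest.
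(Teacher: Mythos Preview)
The paper does not prove this theorem; it is quoted from Solomon and Atkinson--van Willigenburg and stated without argument, so there is no in-paper proof to compare against. Your outline follows the standard route and the homomorphism and semisimplicity steps are fine, but there is a real gap in your identification of the kernel.

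For the differences $\Xi^q-\Xi^r$ with $q\approx r$ you are correct. For $\Xi^q$ with $\lambda(q)\notin\pReg(n)$, however, you write that $\varphi^{\lambda(q),\F}$ ``is an $\F$-linear combination of the $\varphi^{\mu,\F}$ for $p$-regular $\mu$ and contributes nothing new once we account for dimensions.'' That does not place $\Xi^q$ in $\ker(\sol_{n,\F})$: being expressible in terms of a basis is not the same as being zero. What you actually need is that $\varphi^{\lambda,\F}=0$ as a function on $\P(n)$, i.e.\ $p\mid \varphi^{\lambda}(\mu)$ for \emph{every} $\mu$, whenever $\lambda$ is not $p$-regular. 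This ``row'' vanishing is logically independent of the ``column'' statement you argued in the previous paragraph (that $\varphi^{q,\F}(\mu)$ depends only on the $p$-class of $\mu$), and it is exactly the modular lemma that Atkinson--van Willigenburg supply. Without it your dimension count does not close: you know $\dim\ker=2^{n-1}-|\pReg(n)|$ and that the differences already contribute a subspace of dimension $2^{n-1}-|\P(n)|$ inside the kernel, but you have not shown that the remaining $|\P(n)|-|\pReg(n)|$ elements $\Xi^{\lambda}$ with $\lambda\notin\pReg(n)$ map to zero. (Also, the phrase ``cardinality $2^{n-1}-|\pReg(n)|$'' is not literally right for the listed set; you mean the dimension of its span.) You flag this at the end as ``the main obstacle,'' which is accurate, but the body of your argument treats it as already established rather than as the step still to be proved.
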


In the case when the context is clear, we simply write $\sol$ for $\sol_{n,\F}$.

Recall that the nilpotency index of an algebra $A$ is the smallest positive integer $m$ such that $\rad^m(A)= 0$. We have the following result on the nilpotency index of $\Des{n}{\F}$ which is independent of $p$.

\begin{thm}[{\cite[Corollary 3.5]{Atkinson:1992a}, \cite[Theorem 3]{Atkinson/vanWilligenburg:1997a}}]\label{T:radlen} Suppose that $n\geq 3$. The nilpotency index of $\Des{n}{\F}$ is $n-1$.
\end{thm}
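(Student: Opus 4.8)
The plan is to prove the two inequalities separately: that $\rad(\Des{n}{\F})^{\,n-1}=0$ (so the nilpotency index is at most $n-1$) and that $\rad(\Des{n}{\F})^{\,n-2}\neq 0$ (so it is at least $n-1$). Throughout I would organise the computation by the number of parts: for $k\ge 1$ set $L_k:=\Span_\F\{\Xi^q:\ell(q)\ge k\}$, giving a descending chain $\Des{n}{\F}=L_1\supseteq L_2\supseteq\cdots\supseteq L_n=\F\Xi^{1^n}\supseteq L_{n+1}=0$. The multiplication rule of \Cref{T: GR 1.1} controls lengths: if $\Xi^s$ occurs in $\Xi^r\Xi^q$ then $s$ is read off an $(\ell(r)\times\ell(q))$-matrix in $N^s_{r,q}$, so $\ell(s)$ equals its number of nonzero entries, and since every row and column is nonzero this gives $\max(\ell(r),\ell(q))\le \ell(s)\le \ell(r)+\ell(q)-1$, the upper bound being attained exactly by the matrices with staircase (spanning-tree) support. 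From the description of $\rad(\Des{n}{\F})$ in \Cref{T: Sol epi} — it is spanned by the $\Xi^q$ with $\lambda(q)\notin\pReg(n)$ and by the differences $\Xi^q-\Xi^r$ with $q\approx r$, $q\neq r$, all of which have at least two parts — one reads off immediately that $\rad(\Des{n}{\F})\subseteq L_2$.

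For the upper bound the heart of the matter is a length-raising property: if $x\in\rad(\Des{n}{\F})$ and $y\in\rad(\Des{n}{\F})\cap L_k$, then $xy\in L_{k+1}$. Granting this, an induction gives $\rad(\Des{n}{\F})^{\,k}\subseteq L_{k+1}$, whence $\rad(\Des{n}{\F})^{\,n-1}\subseteq L_n=\F\Xi^{1^n}$; a final step then shows the coefficient of $\Xi^{1^n}$ in any $(n-1)$-fold product of radical elements vanishes, so that $\rad(\Des{n}{\F})^{\,n-1}=0$. (When $n<p$ this last step is automatic, since $(1^n)\in\pReg(n)$ forces $\F\Xi^{1^n}\cap\rad(\Des{n}{\F})=0$; the point of the general argument is to treat all $p$ uniformly.) The length-raising property does \emph{not} follow from the crude bound $\ell(s)\ge\max(\ell(r),\ell(q))$, which only yields $\rad(\Des{n}{\F})\cdot\rad(\Des{n}{\F})\subseteq L_2$: what is needed is that the terms of minimal length $k$ in $\Xi^a\Xi^{b}$, arising from matrices with exactly one nonzero entry per column, cancel once the inputs lie in the radical. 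Extracting this cancellation from the combinatorics of the counts $|N^s_{r,q}|$ is the main obstacle, and it is where the two kinds of radical generators must be handled together; the $p$-singular generators $\Xi^q$ matter here only when $p=2$, where length-two generators such as $\Xi^{(m,m)}$ occur.

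For the lower bound it suffices to exhibit a single nonzero $(n-2)$-fold product. I would take $x:=\Xi^{(1,n-1)}-\Xi^{(n-1,1)}\in\rad(\Des{n}{\F})$ and show $x^{\,n-2}\neq 0$. Using \Cref{T: GR 1.1} to retain only the top-length contributions (the staircase matrices), the leading part of $x^{\,k}$ lies in $L_{k+1}$ and is computed inside the associated graded of the \emph{ascending} length filtration $L^{\le a}L^{\le b}\subseteq L^{\le a+b-1}$, where the product is governed by staircase supports alone. A direct calculation there should give the leading part of $x^{\,k}$ as
\[
\sum_{a=0}^{k}(-1)^{\,k-a}\binom{k}{a}\,\Xi^{(1^{a},\,n-k,\,1^{\,k-a})},
\]
an alternating binomial combination whose two extreme coefficients equal $\pm1$ and hence survive in every characteristic; thus $x^{\,n-2}\neq 0$ and $\rad(\Des{n}{\F})^{\,n-2}\neq 0$. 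The same formula also supplies the finishing step of the upper bound: at $k=n-1$ the part $n-k$ equals $1$, every summand collapses to $\Xi^{1^n}$, and the total coefficient is $(1-1)^{\,n-1}=0$, so the would-be top term never appears.

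Combining the two bounds gives that the nilpotency index equals $n-1$. The decisive difficulty is the length-raising (cancellation) lemma for products of radical elements, which carries the whole upper bound; by contrast the lower bound is a direct and characteristic-free leading-term computation.
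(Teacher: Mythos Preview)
This theorem is not proved in the paper; it is quoted from \cite{Atkinson:1992a} and \cite{Atkinson/vanWilligenburg:1997a}. The paper does, however, invoke the argument later (see the discussion around \Cref{Eq:w}), recording that $\rad^{n-2}(\Des{n}{\F})$ is one-dimensional and spanned by $w^{n-2}$ with $w=\Xi^{(n-1,1)}-\Xi^{(1,n-1)}$, via exactly your alternating-binomial expansion. So your lower-bound witness and formula are precisely the ones in the literature.

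For the upper bound, the length-raising lemma is indeed the crux and is essentially what the cited papers establish; you are right that it does not follow from the crude inequality $\ell(s)\ge\max(\ell(r),\ell(q))$ and needs a genuine cancellation argument, which you leave open. There is, however, a further gap you do not flag: your ``finishing step'' does not work as written. The identity $(1-1)^{n-1}=0$ shows only that $x^{n-1}=0$ for the \emph{single} element $x$; it says nothing about an arbitrary $(n-1)$-fold product of radical elements. Granting the length-raising lemma, you obtain only $\rad^{n-1}\subseteq L_n=\F\Xi^{1^n}$, and when $p\le n$ the element $\Xi^{1^n}$ lies in the radical, so the intersection $\rad\cap\F\Xi^{1^n}$ is the whole line and you cannot conclude. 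One honest route is to upgrade the length-raising argument to the sharper statement the paper actually cites from Atkinson, namely that $\rad^{n-2}$ is exactly $\F w^{n-2}$, and then verify directly that $\rad\cdot w^{n-2}=w^{n-2}\cdot\rad=0$; but the one-dimensionality of $\rad^{n-2}$ is strictly stronger than the inclusion $\rad^{n-2}\subseteq L_{n-1}$ your filtration delivers, since $L_{n-1}$ has dimension $n$.
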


\subsection{Simple  modules and Cartan matrices}

We shall now describe the simple modules for $\Des{n}{\F}$ as in \cite{Atkinson/vanWilligenburg:1997a}. The simple $\Des{n}{\F}$-modules are parametrised by $\pReg(n)$. For each $\lambda\in \P_p(n)$, let $M_{\lambda,\F}$ be the one-dimensional vector space over $\F$ such that, for $v\in M_{\lambda,\F}$ and $\alpha\in \Des{n}{\F}$, we have \[\alpha\cdot v=\sol(\alpha)(\lambda)v.\] In particular, $\Xi^q\cdot v=\varphi^{q,\F}(\lambda)v$. Since the Young characters take values in $\Z$, we may lift the simple modules. More generally, for each $\mu\in\P(n)$, let $M_{\mu,\cO}$ be the $\cO$-free module of rank one  such that, for $v\in M_{\mu,\cO}$, \[\Xi^q\cdot v=\varphi^{q,\cO}(\mu)v.\] Notice that we have $M_{\lambda,\F}\cong M_{\mu,\Z}\otimes_{\Z} \F$ if and only if $\lambda\sim_p\mu$.

Recall that, in general,  the
Cartan numbers of an algebra $A$ are the integers $C_{ij}:=(P_i:S_j)$ which are the composition multiplicities of the projective indecomposable $A$-modules.  The matrix $C=(C_{ij})_{i,j\in[1,r]}$ is called the Cartan matrix of $A$ where $r$ is the total number of non-isomorphic simple (or projective indecomposable) $A$-modules.
For an algebra which is defined over $\Z$, the Cartan matrices over $\Q$ and over $\mathbb{F}_p$ can be related via  the decomposition matrix. The entries of the decomposition matrix are the composition multiplicities of the $A$-modules obtained by the $p$-modular reduction (see Subsection \ref{SS:reductionp}) of the simple modules defined over $\Q$.

We describe this for the descent algebras in type $\A$.
We fix the total order $\leq$ on $\P(n)$ defined by the lexicographic order which refines the partial order on $\P(n)$ defined by the weak refinement $\wref$. This induces a total order on the subset $\pReg(n)$ of $\P(n)$. Let $D$ be the matrix with the rows and columns labelled by $\P(n)$ and $\pReg(n)$ respectively such that, for any $\lambda\in\P(n)$ and $\mu\in\pReg(n)$, $d_{\lambda,\mu}=1$ if $\lambda\sim_p\mu$ and $d_{\lambda,\mu}=0$ otherwise. The matrix $D$ is  the decomposition matrix of the descent algebra (of degree $n$ with respect to the prime $p$).

Since the simple modules for the descent algebras are defined over $\Z$, their Cartan matrices depend only on the characteristic of the field. When $p=\infty$, the Cartan matrix $C$ has been formulised by Blessenohl--Laue \cite{Blessenohl/Laue:2002a}. Since we do not need the full description of the formula, we refer the reader to \cite[Corollary 2.1]{Blessenohl/Laue:2002a}. By the description of $C_{qr}$, if $C_{qr}\neq 0$, then $q$ is a weak refinement of $r$ and hence, by the choice of our total order on $\P(n)$, we have $q\leq r$. That is, $C$ is upper triangular. When $p$ is finite, the Cartan matrix $\wt{C}$ can be obtained using the result of Atkinson--Pfeiffer--van Willigenburg \cite{Atkinson/Pfeiffer/vanWilligenburg:2002a}. Their result holds for the descent algebras of all finite Coxeter groups but, again, we only need it for the type $\A$ case.




\begin{thm}[{\cite[Theorem 8]{Atkinson/Pfeiffer/vanWilligenburg:2002a}}]\label{T: APW} Let $C$ be the Cartan matrix of $\Des{n}{\Q}$ and $D$ be the decomposition matrix. Then the Cartan matrix of $\Des{n}{\mathbb{F}_p}$ is $\wt{C}=D^\top CD$.
\end{thm}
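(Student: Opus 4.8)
The plan is to realise all three matrices as structure constants of the standard maps in a $p$-modular triangle and then to read off $\wt{C}=D^\top CD$ as a composite. First I would fix a $p$-modular system $(K,\cO,k)$ with $\cO$ a complete discrete valuation ring, $K=\mathrm{Frac}(\cO)$ of characteristic $0$ and residue field $k$ of characteristic $p$ (for instance $\cO=\Z_p$, $K=\Q_p$, $k=\Fp$). Since the Young characters are integer-valued, $\Des{n}{\cO}$ is defined, and because the Cartan and decomposition numbers depend only on the characteristic, the matrices $C$, $\wt{C}$, $D$ computed over $(K,k)$ agree with those over $(\Q,\Fp)$. As $\cO$ is complete local, the primitive orthogonal idempotents $\bar e_\mu$ ($\mu\in\pReg(n)$) of $\Des{n}{k}$ lift to primitive orthogonal idempotents $e_\mu\in\Des{n}{\cO}$; set $P_{\mu,\cO}=\Des{n}{\cO}e_\mu$, an $\cO$-lattice whose base changes are $P_{\mu,K}=\Des{n}{K}e_\mu$ and the projective cover $P_{\mu,k}=\Des{n}{k}\bar e_\mu$ of $M_{\mu,k}$.

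Next I would invoke the decomposition map $d\colon G_0(\Des{n}{K})\to G_0(\Des{n}{k})$ on Grothendieck groups, sending the class of a $K$-module to the class of the reduction of any $\cO$-lattice in it. By Brauer--Nesbitt this is well defined; its matrix in the bases of simple modules is exactly $D$ (here each $K$-simple $M_{\lambda,K}$ reduces to the single simple $M_{\mu,k}$ with $\lambda\sim_p\mu$, so the row of $D$ indexed by $\lambda$ carries its unique $1$ in column $\mu$). Since $P_{\mu,\cO}$ is a lattice in $P_{\mu,K}$ with reduction $P_{\mu,k}$, we obtain $[P_{\mu,k}]=d[P_{\mu,K}]$.

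The crucial step is to decompose $P_{\mu,K}$ into indecomposable $K$-projectives. As $\Des{n}{K}$ is basic with one-dimensional simples, the multiplicity of $P_{\lambda',K}$ ($\lambda'\in\P(n)$) as a summand of $P_{\mu,K}$ equals $\dim_K\Hom(P_{\mu,K},M_{\lambda',K})=\dim_K e_\mu M_{\lambda',K}$. On the rank-one $\cO$-form $M_{\lambda',\cO}$ the idempotent $e_\mu$ acts by a scalar in $\{0,1\}$, and this scalar is forced by its reduction modulo $\mathfrak{p}$, namely the action of $\bar e_\mu$ on $M_{\bar\lambda',k}$ (where $\bar\lambda'\in\pReg(n)$, $\lambda'\sim_p\bar\lambda'$), which is $1$ exactly when $\bar\lambda'=\mu$, i.e. when $d_{\lambda'\mu}=1$. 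Hence $[P_{\mu,K}]=\sum_{\lambda'}d_{\lambda'\mu}[P_{\lambda',K}]$. Passing through the Cartan map over $K$ and then $d$ gives
\[
\wt{C}_{\mu\nu}=(P_{\mu,k}:M_{\nu,k})=\sum_{\lambda,\lambda'}d_{\lambda'\mu}\,C_{\lambda'\lambda}\,d_{\lambda\nu}=(D^\top CD)_{\mu\nu},
\]
which is the assertion.

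I expect the main obstacle to be precisely the non-semisimplicity of $\Des{n}{K}$. In the group-algebra situation the characteristic-zero Cartan matrix is the identity and one recovers the familiar $\wt{C}=D^\top D$; here $P_{\mu,K}$ decomposes nontrivially, and it is the reciprocity computation of the previous paragraph that inserts the factor $C$. Care is also needed to keep the two index sets $\P(n)$ and $\pReg(n)$, together with the corresponding bases of $K_0$ and $G_0$, carefully separated so that the transposes emerge in the correct places.
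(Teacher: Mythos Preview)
The paper does not give its own proof of this statement; it is quoted from \cite[Theorem 8]{Atkinson/Pfeiffer/vanWilligenburg:2002a} and used as a black box. Your argument is correct and is the standard Brauer--Cartan triangle argument, adapted to the situation where the characteristic-zero algebra is basic but not semisimple. The one step that goes beyond the classical group-algebra case --- identifying the multiplicity of each indecomposable $P_{\lambda',K}$ in the lifted projective $P_{\mu,K}=\Des{n}{K}e_\mu$ with the decomposition number $d_{\lambda'\mu}$ --- you handle cleanly: $e_\mu$ acts on the rank-one lattice $M_{\lambda',\cO}$ by an idempotent scalar in $\cO$, hence by $0$ or $1$, and this is determined by its reduction, namely the action of $\bar e_\mu$ on $M_{\bar\lambda',k}$. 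Once this reciprocity $[P_{\mu,K}]=\sum_{\lambda'}d_{\lambda'\mu}[P_{\lambda',K}]$ is in hand, the identity $\wt C=D^\top CD$ drops out from the Cartan map over $K$ followed by the decomposition map, exactly as you write.

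This is in essence the same mechanism as in the cited reference (there stated for arbitrary finite Coxeter groups); your presentation specialises to type~$\A$ and exploits the fact that here the decomposition matrix has only $0,1$ entries, which makes the idempotent-scalar step particularly transparent. Your final paragraph correctly flags the only real subtlety: keeping the index sets $\P(n)$ and $\pReg(n)$ straight so that the transpose lands where it should.
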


We give  an example which we shall need later  but leave the details to the reader.

\begin{eg}\label{Eg: D6 Cartanp} When $n=6$, the Cartan matrices in the cases when $p\geq 7$ and $p=5$ are $C$ and $\wt C$ respectively given as below:
with respect to the lexicographic order
\begin{align*}&1^6<
 21^4<
2^21^2<
2^3<
3 1^3 <
 3 2 1<3^2 <
4 1^2<
4 2 <
 51 <
 6 ,
\end{align*} we have
\begin{align}\label{Eq:C60}
C&=\begin{pmatrix}
1&0&0&0&0&0&0&0&0&0&0\\
0&1&0&0&1&0&0&1&0&1&1\\
0&0&1&0&0&1&1&0&1&1&1\\
0&0&0&1&0&0&0&0&0&0&0\\
0&0&0&0&1&0&0&1&0&1&1\\
0&0&0&0&0&1&1&0&1&1&2\\
0&0&0&0&0&0&1&0&0&0&0\\
0&0&0&0&0&0&0&1&0&1&1\\
0&0&0&0&0&0&0&0&1&0&1\\
0&0&0&0&0&0&0&0&0&1&1\\
0&0&0&0&0&0&0&0&0&0&1\\
\end{pmatrix},
& \wt{C}&=\begin{pmatrix}
1&0&0&1&0&0&1&0&1&1\\
0&1&0&0&1&1&0&1&1&1\\
0&0&1&0&0&0&0&0&0&0\\
0&0&0&1&0&0&1&0&1&1\\
0&0&0&0&1&1&0&1&1&2\\
0&0&0&0&0&1&0&0&0&0\\
0&0&0&0&0&0&1&0&1&1\\
0&0&0&0&0&0&0&1&0&1\\
0&0&0&0&0&0&0&0&2&1\\
0&0&0&0&0&0&0&0&0&1\\
\end{pmatrix}.
\end{align}
\end{eg}

In the $p=\infty$ case,  different sets of explicit primitive orthogonal idempotents of $\Des{n}{\F}$ have been obtained. For example, those given by Garsia--Reutenauer \cite{Garsia/Reutenauer:1989a}. In the $p<\infty$ case, Erdmann--Schocker \cite[Corollary 6]{ES} showed that there exists a complete set of primitive orthogonal idempotents \[\{e_{\lambda,\F}:\lambda\in\pReg(n)\}\] for $\Des{n}{\F}$ such that $\sol(e_{\lambda,\F})=\Char_{\lambda,\F}$ where $\Char_{\lambda,\F}$ is the characteristic function with respect to the $p$-equivalence class $\ccl{\lambda,p}$. In the recent paper \cite[\S3]{Lim:2023a}, the second author gave a construction for such $e_{\lambda,\F}$'s. We shall not need the complete description of the idempotents but, theoretically, they can be used to compute the Ext quivers which we shall need later, especially for the small cases.

\subsection{The Ext quiver of $\Des{n}{F}$}
As before, the Ext quiver of $\Des{n}{\F}$ depends only on $p$ but not on the particular field and we denote it by $Q_{n,p}$.  Since the simple modules are the
$M_{\lambda, \F}$'s, one for each $\lambda \in \pReg(n)$, we  label the vertices of $Q_{n, p}$ by the $p$-regular partitions of $n$.
  Let $e_{\lambda,\F}$ be the idempotent corresponds to the simple module $M_{\lambda,\F}$. We write $P_{\lambda,\F}=\Des{n}{\F}e_{\lambda,\F}$ for the projective cover of $M_{\lambda,\F}$.  Since $\Des{n}{\F}$ is basic, we have that \[\Des{n}{\F}\cong \F Q_{n,p}/I\] for some ideal $I$ of $\F Q_{n,p}$ as in Theorem \ref{T: path alg rel}. The Ext quivers of $\Des{n}{\F}$ in the case of $p>n$ (including $p=\infty$) have been determined by Schocker and Saliola as follows.

\begin{thm}[{\cite[Theorem 8.1]{Saliola:2008a},\cite[Theorem 5.1]{Schocker:2004a}}]\label{T:OrdExtquiver} Suppose that $p>n$  and $\lambda,\mu\in\pReg(n)$. The Ext quiver of $\Des{n}{\F}$ has an arrow $\lambda\to \mu$ if and only if $\mu$ is obtained from $\lambda$ by adding two distinct parts of $\lambda$. That is,  there exist positive integers $a_1,\ldots,a_r,x,y$ with $x\neq y$ such that $\lambda \approx (a_1, \ldots, a_r, x, y)$ and $\mu \approx (a_1, \ldots, a_r, x+y)$.
\end{thm}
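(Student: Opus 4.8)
The plan is to compute the arrow multiplicities $\dim\Ext^1(M_{\lambda,\F},M_{\mu,\F})=\dim e_{\mu,\F}\bigl(\rad(A)/\rad^2(A)\bigr)e_{\lambda,\F}$ directly, where $A:=\Des{n}{\F}$, by equipping $A$ with a grading by the number of parts. First I would record the inputs available when $p>n$. Every partition of $n$ is then $p$-regular, so the decomposition matrix is the identity and, by \Cref{T: APW}, the Cartan matrix of $A$ coincides with the characteristic-zero Cartan matrix $C$ of Blessenohl--Laue \cite{Blessenohl/Laue:2002a}. I would use two features of $C$: it is upper triangular with unit diagonal with respect to the order refining weak refinement, so that $C_{\lambda\mu}\neq0$ forces $\lambda\wref\mu$ and $C_{\lambda\lambda}=1$; and, by the general principle that $\dim e_{\mu,\F}V=(V:M_{\mu,\F})$, one has $\dim e_{\mu,\F}Ae_{\lambda,\F}=(P_{\lambda,\F}:M_{\mu,\F})=C_{\lambda\mu}$.

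The key device is the decomposition $A=\bigoplus_{d\geq0}A_d$ with $A_d:=\bigoplus_{\ell(\lambda)-\ell(\mu)=d}e_{\mu,\F}Ae_{\lambda,\F}$. This is a grading: the blocks are cut out by the orthogonal idempotents $e_{\lambda,\F}$ and the length differences add along a product $e_{\nu,\F}Ae_{\mu,\F}\cdot e_{\mu,\F}Ae_{\lambda,\F}$. It is concentrated in non-negative degrees, since $e_{\mu,\F}Ae_{\lambda,\F}\neq0$ forces $\lambda\wref\mu$ and hence $\ell(\mu)\leq\ell(\lambda)$; and its degree-zero part is $A_0=\bigoplus_\lambda e_{\lambda,\F}Ae_{\lambda,\F}$, which is semisimple because $\dim e_{\lambda,\F}Ae_{\lambda,\F}=C_{\lambda\lambda}=1$ rules out loops. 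As $A_{\geq1}:=\bigoplus_{d\geq1}A_d$ is a nilpotent ideal with semisimple quotient, $\rad(A)=A_{\geq1}$. Now each block $e_{\mu,\F}Ae_{\lambda,\F}$ is homogeneous of degree $\ell(\lambda)-\ell(\mu)$; so if $\ell(\mu)=\ell(\lambda)-1$ the whole block lies in degree $1$, meets $\rad^2(A)\subseteq A_{\geq2}$ trivially, and therefore
\[\dim\Ext^1(M_{\lambda,\F},M_{\mu,\F})=\dim e_{\mu,\F}Ae_{\lambda,\F}=C_{\lambda\mu}\qquad(\ell(\mu)=\ell(\lambda)-1).\]
A partition $\mu$ with $\lambda\wref\mu$ and $\ell(\mu)=\ell(\lambda)-1$ is precisely one obtained from $\lambda$ by merging two of its parts, so it remains only to read off from the Blessenohl--Laue formula that this single-merge Cartan number is $1$ when the two merged parts are distinct and $0$ when they are equal. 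This determines every arrow between partitions whose lengths differ by $1$, and at once produces the required arrows for the distinct-part merges.

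It then remains to show there are no arrows $\lambda\to\mu$ with $\ell(\lambda)-\ell(\mu)\geq2$; equivalently, that $A$ is generated in degrees $0$ and $1$, so that $\rad^2(A)=A_{\geq2}$ and $\rad(A)/\rad^2(A)=A_1$ sits in degree $1$. By the grading this reduces to the surjectivity of
\[\bigoplus_{\nu:\ \ell(\nu)=\ell(\lambda)-1} e_{\mu,\F}Ae_{\nu,\F}\otimes_{\F} e_{\nu,\F}Ae_{\lambda,\F}\longrightarrow e_{\mu,\F}Ae_{\lambda,\F}\]
for all $\lambda\wref\mu$ with $\ell(\lambda)-\ell(\mu)\geq2$. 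I expect this to be \emph{the main obstacle}. The plan is to prove it from the explicit product formula \Cref{T: GR 1.1}: for such $\lambda,\mu$ with $C_{\lambda\mu}\neq0$ one first checks that $\lambda$ admits a distinct-part merge $\lambda\to\nu$ with $\lambda\wref\nu\wref\mu$, and then verifies on the basis $\{\Xi^q\}$ that multiplying the corresponding degree-one element into the block $e_{\mu,\F}Ae_{\nu,\F}$ already fills $e_{\mu,\F}Ae_{\lambda,\F}$; an induction on $\ell(\lambda)-\ell(\mu)$ then yields generation in degree $1$. Combining the two steps, the Ext quiver has an arrow $\lambda\to\mu$ exactly when $\ell(\mu)=\ell(\lambda)-1$ and the merge is of two distinct parts, which is the assertion.

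Finally, I would record a consistency check that also sharpens the picture. Since $\dim A=2^{n-1}=\sum_{\lambda,\mu}C_{\lambda\mu}$, once generation in degree $1$ is established the induced surjection $\F Q_{n,p}\twoheadrightarrow A$ of \Cref{T: path alg rel} is forced by a dimension count to be an isomorphism, so that $A$ is in fact hereditary, $C_{\lambda\mu}$ equals the number of directed (distinct-merge) paths $\lambda\to\mu$, and the nilpotency index is the longest such path length, in agreement with \Cref{T:radlen}. The only combinatorial facts feeding the argument, namely the single-merge values of $C$ and, for this last remark, the path-count interpretation, are both immediate from the Blessenohl--Laue description of $C$.
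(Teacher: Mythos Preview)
The paper does not give its own proof of this statement; it is quoted from Schocker and Saliola, with the remark that Schocker's argument ``essentially follows from a result of Blessenohl--Laue'' on the descending Loewy series of $\Des{n}{\Q}$. Your grading $A=\bigoplus_{d\geq 0}A_d$ by $d=\ell(\lambda)-\ell(\mu)$ is a clean repackaging of exactly that result: the identity $\rad^k(A)=A_{\geq k}$ for all $k$ (equivalently, generation of $A$ in degrees $0$ and $1$) is precisely the Blessenohl--Laue Loewy-series theorem \cite{BL}, and once it is in hand your analysis of $A_0$ and $A_1$, together with the single-merge Cartan values, correctly pins down the arrows. So the architecture is right and matches the approach behind Schocker's proof; Saliola's argument, by contrast, goes through the face semigroup algebra and is genuinely different. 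The difficulty is that the step you yourself flag as ``the main obstacle'' is essentially the entire content of the theorem: your plan to establish surjectivity of the degree-$1$ multiplication map from \Cref{T: GR 1.1} by finding a suitable intermediate $\nu$ and then ``verifying on the basis'' is not carried out, and this is where all the work in \cite{BL} lies.

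Your final paragraph, however, contains a genuine error that you should remove. The algebra $\Des{n}{\F}$ is \emph{not} hereditary for $n\geq 6$ even when $p>n$: \Cref{C:n6} shows $\Des{6}{\F}\cong\F Q_{6,p}/I$ with $\dim_\F I=1$, so $\dim_\F\F Q_{6,p}=33>32=2^{5}$. Concretely, in $Q_{6,p}$ there are two directed paths from $(2^2,1^2)$ to $(6)$, namely via $(3,2,1)\to(5,1)$ and via $(3,2,1)\to(4,2)$, yet $C_{(2^2,1^2),(6)}=1$ by \Cref{Eq:C60}. Hence $C_{\lambda\mu}$ is \emph{not} in general the number of distinct-merge paths $\lambda\to\mu$, the ``dimension count'' fails, and the claimed isomorphism $\F Q_{n,p}\cong A$ is false for $n\geq 6$. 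This does not invalidate your Ext-quiver argument (which only needs $\rad^2(A)=A_{\geq 2}$, not heredity), but the consistency check and the asserted path-count interpretation of $C$ are incorrect.
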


In \cite{Schocker:2004a}, using just the description of the Ext quiver, Schocker obtained the representation type of $\Des{n}{\F}$ when $p=\infty$.

\begin{thm}[{\cite[\S5.1.2]{Schocker:2004a}}] When $p=\infty$, the descent algebra $\Des{n}{\F}$ has finite type if $n\leq 5$,  and wild type otherwise.
\end{thm}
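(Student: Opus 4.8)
The plan is to read the representation type off the explicit Ext quiver $Q_{n,\infty}$ given by \Cref{T:OrdExtquiver} (applicable since $p=\infty>n$), treating the finite and the wild ranges by different mechanisms. For $n\le 5$ I would compute $Q_{n,\infty}$ from \Cref{T:OrdExtquiver}: every arrow merges two parts into one and hence strictly decreases the number of parts, so $Q_{n,\infty}$ has no oriented cycle, and a short case check shows its underlying graph is a disjoint union of type-$\A$ Dynkin diagrams together with isolated vertices (for $n=5$, say, the only non-trivial component is the path $21^3-31^2-41-5-32-2^21$ of type $\A_6$, with $1^5$ isolated). Thus $\F Q_{n,\infty}$ is finite-dimensional and of finite type by \Cref{T: Gab}, and since $\Des{n}{\F}$ is a quotient of it (\Cref{T: path alg rel}), \Cref{L: surj alg}(i) yields that $\Des{n}{\F}$ has finite type.

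For the wild range it suffices to treat $n=6$, since for $n>6$ the surjections $\Des{n}{\F}\twoheadrightarrow\Des{6}{\F}$ (classical when $p=\infty$, and recalled in \Cref{S:BGRmap}) together with \Cref{L: surj alg}(i) push wildness upward. Write $A=\Des{6}{\F}$. One cannot argue here through $A/\rad^2(A)$ and \Cref{T:separatedquiver}: the nilpotency index of $A$ is $n-1=5$ by \Cref{T:radlen}, and in fact the separated quiver of $Q_{6,\infty}$ turns out to be a disjoint union of Dynkin diagrams, so the wildness is invisible at the radical-square-zero level and must be located deeper in the radical.

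The key step is an idempotent truncation. Put $e=e_{321,\F}+e_{3^2,\F}+e_{42,\F}+e_{51,\F}+e_{6,\F}$. By \Cref{T:OrdExtquiver} the only arrows among these five vertices are $321\to51$, $321\to42$, $321\to3^2$, $51\to6$ and $42\to6$ (the vertex $3^2$ is a sink, since its two equal parts cannot be added), and every arrow out of one of the five again ends inside the five; hence no path of $Q_{6,\infty}$ connects two of them through a deleted vertex, and $eAe$ has quiver exactly this $Q_e$. The underlying graph of $Q_e$ is a $4$-cycle $321-51-6-42-321$ with a pendant edge to $3^2$, which is neither a Dynkin nor an extended Dynkin diagram, while $Q_e$ itself has no oriented cycle. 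Reading the submatrix of the Cartan matrix $C$ in \Cref{Eg: D6 Cartanp} on the indices $321,3^2,42,51,6$ — in particular the entry $C_{321,6}=2$, which forces the two length-two paths $321\to51\to6$ and $321\to42\to6$ to be linearly independent — gives $\dim_\F eAe=\sum_{i,j}C_{ij}=12=\dim_\F\F Q_e$, the sum running over the five indices. Since $eAe$ is a quotient of $\F Q_e$, the two coincide, so $eAe\cong\F Q_e$ is hereditary of wild type by \Cref{T:tame}, and \Cref{L: surj alg}(ii) shows that $A$, and hence every $\Des{n}{\F}$ with $n\ge 6$, is wild.

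The main obstacle is precisely this wild direction. Because the separated-quiver criterion cannot detect the wildness, one is forced to pin down a wild hereditary algebra inside $\Des{6}{\F}$ as an idempotent truncation and then to prove that this truncation carries no relations; identifying the correct five vertices, verifying that $eAe$ acquires no new arrows, and using the explicit Cartan matrix to certify that $eAe$ is genuinely hereditary (rather than a proper relation-bearing quotient of $\F Q_e$) is where the real work lies.
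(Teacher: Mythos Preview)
Your argument is correct. For $n\le 5$ you use the same mechanism as the paper (Gabriel's theorem on the Dynkin quiver $Q_{n,\infty}$); the paper in fact shows in \Cref{C:nleq5} that $\Des{n}{\F}\cong \F Q_{n,\infty}$ on the nose, but your quotient argument via \Cref{T: path alg rel} and the contrapositive of \Cref{L: surj alg}(i) is equally valid.

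For $n=6$ the two arguments diverge. The paper (in its $p\ge 7$ treatment, which is the relevant analogue) first pins down the relation ideal via \Cref{C:n6}: $\Des{6}{\F}\cong \F Q_{6,\infty}/I$ with $I$ one-dimensional and supported at $e_{6}\,\omega\,e_{2^21^2}$. It then \emph{quotients} by the two-sided ideal $(e_{6})$; since $\omega\in(e_{6})$, the quotient is the hereditary path algebra on $Q_{6,\infty}\setminus\{6\}$, which visibly contains a wild subquiver on $\{2^21^2,41^2,51,321,3^2,42\}$. You instead \emph{idempotent-truncate} at $e=e_{321}+e_{3^2}+e_{42}+e_{51}+e_{6}$; the closure of these five vertices under outgoing arrows guarantees that $e(\F Q_{6,\infty})e=\F Q_e$, and your Cartan-submatrix dimension count $\sum C_{ij}=12=\dim \F Q_e$ forces $eIe=0$, so $eAe\cong\F Q_e$ is hereditary wild. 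Your route sidesteps the need to locate the relation precisely (you never invoke \Cref{C:n6}), at the cost of the extra combinatorial check that the five vertices are arrow-closed; the paper's route trades that check for the structural input of \Cref{C:n6}. Your remark that the separated quiver of $Q_{6,\infty}$ is a union of Dynkin diagrams (so \Cref{T:separatedquiver} cannot see the wildness) is correct and is a genuine obstruction that both proofs must circumvent.
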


\subsection{$\Des{n}{\F}$ for small $n$}  Later, to identify the representation type of descent algebras in the modular case, we will need
some details on $\Des{n}{\F}$ for some small $n$.  It will turn out that knowing the Ext quiver together with some general theory will be
sufficient for us.

\begin{cor}\label{C:nleq5} Let $n\in [2,5]$ and $p>n$ (including $p=\infty$). Then $\Des{n}{\F}$ is isomorphic to the path algebra $\F Q_{n,p}$.
\end{cor}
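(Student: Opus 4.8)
The plan is to invoke Gabriel's presentation and then pin down the algebra by a dimension count. By \Cref{T: path alg rel} we have $\Des{n}{\F}\cong \F Q_{n,p}/I$ with $I\subseteq (\F Q_{n,p}^+)^2$, so the assertion $\Des{n}{\F}\cong \F Q_{n,p}$ is equivalent to $I=0$. Since $\Des{n}{\F}$ is a quotient of $\F Q_{n,p}$, once we know that $\F Q_{n,p}$ is finite-dimensional it suffices to check that the two algebras have the same dimension, namely $\dim_\F \F Q_{n,p}=\dim_\F\Des{n}{\F}=2^{n-1}$; equality of dimensions then forces $I=0$ formally.

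First I would record that $Q_{n,p}$ is acyclic. By \Cref{T:OrdExtquiver} (applicable since $p>n$) every arrow $\lambda\to\mu$ satisfies $\mu\approx(a_1,\dots,a_r,x+y)$ with $\lambda\approx(a_1,\dots,a_r,x,y)$, so $\ell(\mu)=\ell(\lambda)-1$; that is, each arrow strictly decreases the number of parts. Hence $Q_{n,p}$ contains no oriented cycle, $\F Q_{n,p}$ is finite-dimensional, and $\dim_\F \F Q_{n,p}$ equals the total number of directed paths in $Q_{n,p}$ (the trivial paths $e_\lambda$ of length zero included). Incidentally the longest path has length at most $n-1$, matching \Cref{T:radlen}.

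It then remains to carry out, for each $n\in\{2,3,4,5\}$, the finite bookkeeping: list $\pReg(n)$, draw $Q_{n,p}$ via \Cref{T:OrdExtquiver}, enumerate all paths, and check that the total equals $2^{n-1}$. For $n=2$ the quiver has the two vertices $2,1^2$ and no arrows, giving $2=2^1$ paths; for $n=3$ the only arrow is $21\to 3$ (with $1^3$ isolated), giving $3+1=4=2^2$ paths; for $n=4$ the quiver is the chain $21^2\to 31\to 4$ together with the isolated vertices $2^2,1^4$, giving $5+2+1=8=2^3$ paths; and for $n=5$ it is the union of the chains $21^3\to 31^2\to 41\to 5$ and $2^21\to 32\to 5$ with $1^5$ isolated, giving $7+5+3+1=16=2^4$ paths. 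In each case the path count matches $\dim_\F\Des{n}{\F}=2^{n-1}$, so $I=0$ and $\Des{n}{\F}\cong\F Q_{n,p}$.

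The computation is entirely routine; the two points that require genuine care are establishing acyclicity of $Q_{n,p}$ (without which the dimension comparison would be meaningless, since $\F Q_{n,p}$ could a priori be infinite-dimensional) and enumerating the paths correctly, in particular accounting for the two chains of $Q_{5,p}$ that share the terminal vertex $5$ so that no path of length two or three is double-counted. Once the counts are verified, the conclusion $I=0$ is purely formal.
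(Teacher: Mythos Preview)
Your proposal is correct and follows essentially the same approach as the paper's proof: invoke \Cref{T: path alg rel} to write $\Des{n}{\F}\cong \F Q_{n,p}/I$, then conclude $I=0$ by comparing dimensions, using \Cref{T:OrdExtquiver} to compute $\dim_\F \F Q_{n,p}$. The paper merely asserts that ``it is plain to check'' the dimension equals $2^{n-1}$, while you spell out the acyclicity argument and the path counts explicitly; both arguments are the same in substance.
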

\begin{proof} For each of such $n$ and $p$, using Theorem \ref{T:OrdExtquiver}, it is plain to check that the path algebra $\F Q_{n,p}$ has dimension $2^{n-1}$ which is the same as the dimension of $\Des{n}{F}$. By Theorem \ref{T: path alg rel}, we have $\Des{n}{F}\cong \F Q_{n,p}$.
\end{proof}

\begin{cor}\label{C:n6} Suppose that $p\geq 7$. We have $\Des{6}{\F}\cong \F Q_{6,p}/I$ where $I$ is the $1$-dimensional ideal of $\F Q_{6,p}$ spanned by an element $\omega$ satisfying $\omega=e_{6,\F}\omega e_{2^21^2,\F}$.
\end{cor}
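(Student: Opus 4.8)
The plan is to combine Gabriel's theorem (\Cref{T: path alg rel}) with a comparison of dimensions and of Cartan numbers, reading the relevant data off \Cref{T:OrdExtquiver} and \Cref{Eg: D6 Cartanp}. Since $p\geq 7>6$, \Cref{T:OrdExtquiver} describes the quiver $Q_{6,p}$ completely: its vertices are the eleven partitions of $6$, and an arrow $\lambda\to\mu$ records the merging of two distinct parts. A direct inspection yields the nine arrows
\[
21^4\to 31^3,\quad 2^21^2\to 321,\quad 31^3\to 41^2,\quad 321\to 3^2,\quad 321\to 42,\quad 321\to 51,\quad 41^2\to 51,\quad 42\to 6,\quad 51\to 6.
\]
In particular $Q_{6,p}$ is acyclic, so $\F Q_{6,p}$ is finite-dimensional. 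I would record here the one feature that drives the proof: there are exactly two paths from $2^21^2$ to $6$, namely $2^21^2\to 321\to 42\to 6$ and $2^21^2\to 321\to 51\to 6$, whereas between any other ordered pair of vertices there is at most one path.

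Next I would count paths to obtain $\dim\F Q_{6,p}$. Grouping by length (the numbers of paths of length $0,1,2,3,4$ are $11,9,8,4,1$) gives $\dim\F Q_{6,p}=33$. By \Cref{T: path alg rel} there is an ideal $I$ with $(\F Q_{6,p}^+)^m\subseteq I\subseteq(\F Q_{6,p}^+)^2$ and $\Des{6}{\F}\cong\F Q_{6,p}/I$. Since $\dim\Des{6}{\F}=2^{5}=32$, comparing dimensions forces $\dim I=1$.

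The crucial step is to locate $I$ inside the Peirce decomposition of $A:=\F Q_{6,p}$. Writing $1=\sum_{\lambda}e_{\lambda,\F}$, the ideal satisfies $I=\bigoplus_{\lambda,\mu}e_{\mu,\F}Ie_{\lambda,\F}$, and for each ordered pair one has
\[
\dim e_{\mu,\F}(A/I)e_{\lambda,\F}=\dim e_{\mu,\F}Ae_{\lambda,\F}-\dim e_{\mu,\F}Ie_{\lambda,\F}.
\]
Under Gabriel's isomorphism the left-hand side is the Cartan number $(P_{\lambda,\F}:M_{\mu,\F})$ of $\Des{6}{\F}$, i.e.\ the corresponding entry of the matrix $C$ in \Cref{Eg: D6 Cartanp}, while $\dim e_{\mu,\F}Ae_{\lambda,\F}$ is the number of paths from $\lambda$ to $\mu$ in $Q_{6,p}$. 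Comparing the path-count matrix with $C$ entry by entry, the two agree for every pair except $(\lambda,\mu)=(2^21^2,6)$, where the path count equals $2$ but $C$ records $1$. Hence $\dim e_{6,\F}Ie_{2^21^2,\F}=1$ while $\dim e_{\mu,\F}Ie_{\lambda,\F}=0$ for all other pairs, so $I=e_{6,\F}Ie_{2^21^2,\F}$. Taking any nonzero $\omega\in I$ then gives $I=\F\omega$ with $\omega=e_{6,\F}\omega e_{2^21^2,\F}$, which is the claim (and automatically $\omega$ is a combination of the two length-$3$ paths found above).

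I expect the main obstacle to be the Cartan bookkeeping rather than any deep structural point: one must be careful that Gabriel's isomorphism matches the primitive idempotents $e_{\lambda,\F}$ of $\Des{6}{\F}$ with the vertices of $Q_{6,p}$, so that the entries of $C$ genuinely compute the dimensions of the intended Peirce components of $A/I$, and then verify that the sole discrepancy between the path-count matrix and $C$ sits exactly at $(2^21^2,6)$. This is precisely the place where the hypothesis $p\geq 7$ is used twice over: once to invoke \Cref{T:OrdExtquiver} for the shape of $Q_{6,p}$, and once to use the value of $C$ from \Cref{Eg: D6 Cartanp}.
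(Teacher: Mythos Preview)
Your proof is correct and reaches the same conclusion, but the route differs from the paper's. The paper argues structurally: since $I$ is one-dimensional and a two-sided ideal, its spanning element lies in some $e_{\lambda,\F}(\F Q_{6,p})e_{\mu,\F}$ with $\lambda$ a sink and $\mu$ a source, then eliminates the candidates using the nilpotency index (\Cref{T:radlen}) to rule out $\mu=(2,1^4)$ and a single Cartan entry to rule out $\lambda=(3^2)$. You instead compare the full path-count matrix of $\F Q_{6,p}$ with the Cartan matrix $C$ of \Cref{Eg: D6 Cartanp} entry by entry, locating the unique discrepancy at $(2^21^2,6)$. Your approach is more computational but avoids invoking \Cref{T:radlen}; the paper's approach needs only one Cartan entry but relies on the nilpotency-index result. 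Both are clean and short.
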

\begin{proof} By Theorem \ref{T:OrdExtquiver}, the quiver $Q=Q_{6,p}$ is given as follows:
\begin{equation}\label{Dg:Q60}
\begin{tikzcd}
    6&51\ar[l]&41^2\ar[l]&31^3\ar[l]&21^4\ar[l]\\ 42\ar[u]&321\ar[u]\ar[l]\ar[r]&3^2&2^3&1^6\\ &2^21^2\ar[u]
  \end{tikzcd}
\end{equation} The algebra $\F Q$ has dimension $33$ and hence
$\Des{6}{\F} \cong \F Q/I$ where
$I$ is a 1-dimensional ideal of $\F Q$.
The ideal $I$ must then be spanned by an element $\omega = e_{\lambda,\F}\omega e_{\mu,\F}$ for some $\lambda, \mu$. We claim that $\lambda = (6)$ and $\mu = (2^2,1^2)$.

Since $I$ is an ideal, the vertices labelled by $\lambda$ and $\mu$ must be a sink and a source in $Q$ respectively. Moreover there must be a path from $\mu$ to
$\lambda$. Clearly, we have neither $\lambda = (1^6)=\mu$ nor $\lambda=(2^3)=\mu$, if not, $\Des{6}{\F}$ would not have a simple module labelled by $(1^6)$ or $(2^3)$. So $\lambda$ is either $(6)$ or $(3^2)$. Similarly, $\mu$ is either $(2^2,1^2)$ or $(2,1^4)$. We claim that $\mu=(2^2,1^2)$. Suppose on the contrary that $\mu = (2,1^4)$. We must have $\lambda=(6)$ and $\omega$ is a non-zero multiple of the unique path of $\F Q$ of length $4$. So $\rad^4(\Des{6}{\F})=0$. This contradicts Theorem \ref{T:radlen}. Hence $\mu = (2^2,1^2)$. By examining the Cartan matrix $C$ in (\ref{Eq:C60}), we see that $C_{2^21^2,3^2}=1\neq 0$ and hence $\lambda\neq (3^2)$. As such, $\lambda=(6)$.
\end{proof}

\subsection{Reduction modulo $p$}\label{SS:reductionp} In Section \ref{S: rep type}, we wish to obtain partial information about $\Des{n}{\F}$ when $p<\infty$, using representations from characteristic
zero.   The canonical method for this  base change involves a $p$-modular system.  We take for $\cO$ a local principal ideal domain with the maximal ideal $(\pi)$, and we let $K$ be the field of fractions of $\cO$, and the field of characteristic $p$ is   $\FF= \cO/(\pi)$. For the descent algebras,
 we may take $K=\mathbb{Q}$, $\cO=\mathbb{Z}_{(p)}=\{ x\in \bQ\mid  \nu_p(x) \geq 0\}\cup \{ 0\}$ and $\FF=\mathbb{F}_p$.

Let $B$  be an $\cO$-algebra which are finitely generated and free over $\cO$, and let $B$-\textsf{mod} be the category consisting of finitely generated left $B$-modules. We write $B_K:=B\otimes_\cO K$, $B_\FF=B\otimes_\cO \FF$ and, for any object $V$ of $B$-\textsf{mod} that is $\cO$-free, we write $V_K:=V\otimes_\cO K$ and $V_\FF:=V\otimes_\cO \FF$ for the $B_K$- and $B_\FF$-modules respectively. Any finite-dimensional $B_K$-module $W$ has an $\cO$-form, that is,
there is a $B$-module $V$ which is $\cO$-free and   $V_K\cong W$. Such an $\cO$-form  $V$ (of $W$) is not unique but, by the general theory, the composition multiplicities of $V_\FF$ do not depend on the choice of $V$.

\begin{lem}[{\cite[Lemma 1.5.2]{Cline/Parshall/Scott:1996a}}]\label{L:CPS} Suppose that $V$ and $T$ are objects in $B$-\textsf{mod} which are free over $\cO$.
\begin{enumerate}[(i)]
\item The canonical  map $\Hom_{B}(V, T)_K \to \Hom_{B_K}(V_K, T_K)$ is an isomorphism.
\item The canonical map $\Hom_{B}(V, T)_\FF \to \Hom_{B_\FF}(V_\FF, T_\FF)$ is injective.
\item If $\Ext^n_{B_\FF}(V_\FF, T_\FF)=0$, then $\Ext^n_{B}(V, T)=0$.
\item If $\Ext^n_{B}(V, T) \neq 0$, then
	$\Ext^n_{B_\FF}(V_\FF, T_\FF)\neq 0$ via the natural base change map.
\item If $\Ext^1_{B}(V, T)=0$, then the map $\Hom_{B}(V, T)\to \Hom_{B_\FF}(V_\FF, T_\FF)$ is surjective.
\end{enumerate}
\end{lem}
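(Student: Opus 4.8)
The plan is to reduce all five statements to the universal coefficient theorem applied to a single complex of $\cO$-free modules. First I would choose a resolution $\cdots\to P_1\to P_0\to V\to 0$ of $V$ by finitely generated projective $B$-modules; this is possible since $\cO$ is Noetherian and $B$, being module-finite over $\cO$, is a Noetherian ring. Each $P_i$ is then finitely generated projective over $\cO$, hence $\cO$-free, and every syzygy occurring in the resolution is an $\cO$-submodule of a finitely generated free module over the (local) PID $\cO$, so again $\cO$-free. Consequently each short exact sequence of syzygies is $\cO$-split, so the resolution stays exact after applying $-\otimes_\cO\FF$; since $P_i\otimes_\cO\FF$ is projective over $B_\FF$, the complex $P_\bullet\otimes_\cO\FF\to V_\FF$ is a projective resolution of $V_\FF$. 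The same holds verbatim over $K$.

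Next I would set $C^\bullet:=\Hom_B(P_\bullet,T)$. Because each $P_i$ is a direct summand of some $B^{m_i}$, the term $\Hom_B(P_i,T)$ is a direct summand of $T^{m_i}$, hence a finitely generated $\cO$-free module, and the base-change identity $\Hom_B(P_i,T)\otimes_\cO\FF\cong\Hom_{B_\FF}(P_i\otimes_\cO\FF,T_\FF)$ holds (and likewise over $K$). Thus $\Ext^n_B(V,T)=H^n(C^\bullet)$, while $\Ext^n_{B_\FF}(V_\FF,T_\FF)=H^n(C^\bullet\otimes_\cO\FF)$ and $\Ext^n_{B_K}(V_K,T_K)=H^n(C^\bullet\otimes_\cO K)$. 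Part (i) is then immediate: $K$ is flat over $\cO$, so tensoring with $K$ commutes with cohomology, giving $\Ext^n_B(V,T)\otimes_\cO K\cong\Ext^n_{B_K}(V_K,T_K)$, whose $n=0$ instance is the assertion.

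For parts (ii)--(v) I would invoke the universal coefficient theorem for the complex $C^\bullet$ of $\cO$-free modules tensored with $\FF=\cO/(\pi)$, which yields for each $n$ a natural short exact sequence
\[
0\to \Ext^n_B(V,T)\otimes_\cO\FF \xrightarrow{\ \beta^n\ } \Ext^n_{B_\FF}(V_\FF,T_\FF)\to \operatorname{Tor}^\cO_1\!\big(\Ext^{n+1}_B(V,T),\FF\big)\to 0,
\]
where $\beta^n$ is the natural base-change map. Taking $n=0$ gives the injection of (ii). For (iii), if the middle term vanishes then $\Ext^n_B(V,T)\otimes_\cO\FF=0$; since $\Ext^n_B(V,T)$ is a subquotient of $C^n$ and so finitely generated over the local ring $\cO$, Nakayama forces $\Ext^n_B(V,T)=0$, and (iv) is the contrapositive, the nonvanishing being detected through $\beta^n$. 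For (v), the hypothesis $\Ext^1_B(V,T)=0$ kills the $\operatorname{Tor}$ term at $n=0$, so $\beta^0$ is an isomorphism; composing the reduction surjection $\Hom_B(V,T)\twoheadrightarrow\Hom_B(V,T)\otimes_\cO\FF$ with $\beta^0$ shows the map of (v) is onto.

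The technical heart is the first paragraph: one must ensure that reduction modulo $\pi$ carries the chosen $B$-projective resolution of $V$ to a $B_\FF$-projective resolution of $V_\FF$, which is precisely where $\cO$-freeness of all syzygies—and hence $\cO$-splitness, via the PID hypothesis—is used; once this is in place, parts (ii)--(v) are formal consequences of the universal coefficient sequence together with Nakayama's lemma. The one point demanding care is getting the degree shift in the $\operatorname{Tor}$ term of that sequence correct, since it is what aligns the vanishing hypotheses in (iii) and (v) with the right $\Ext$-degrees.
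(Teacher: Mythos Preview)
The paper does not prove this lemma; it is quoted verbatim from \cite[Lemma 1.5.2]{Cline/Parshall/Scott:1996a} and used as a black box. So there is no in-paper argument to compare against.

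Your proof is correct and is essentially the standard one. The key points---that a projective $B$-resolution of $V$ reduces mod $\pi$ (and localises at $K$) to a projective resolution because all syzygies are $\cO$-free over the PID $\cO$, that $\Hom_B(P_i,T)$ base-changes correctly because $P_i$ is a summand of a free $B$-module, and that the universal coefficient sequence for the resulting cochain complex of free $\cO$-modules gives the short exact sequence
\[
0\to \Ext^n_B(V,T)\otimes_\cO\FF \to \Ext^n_{B_\FF}(V_\FF,T_\FF)\to \operatorname{Tor}^\cO_1\!\big(\Ext^{n+1}_B(V,T),\FF\big)\to 0
\]
---are all handled cleanly. Your use of Nakayama in (iii) is justified since $\Ext^n_B(V,T)$ is a subquotient of the finitely generated $\cO$-module $\Hom_B(P_n,T)$ and $\cO$ is Noetherian local, and the degree shift in the $\operatorname{Tor}$ term is the right one for cochain complexes. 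This is exactly the line of argument one would expect in the Cline--Parshall--Scott reference.
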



Lemma \ref{L:CPS}(i) implies that, for any objects $V$ and $T$ in $B$-\textsf{mod} that are $\cO$-free,  we have
\begin{equation}\label{Eq:Ext1}
\Ext^1_{B}(V, T)_K \cong \Ext^1_{B_K}(V_K, T_K).
\end{equation} Applying Lemma \ref{L:CPS} to the descent algebra case, we obtain the following corollary.

\begin{cor}\label{C:Ext1Descent} Let $\delta,\gamma\in\P(n)$ and $\lambda,\mu\in \pReg(n)$ such that $\delta\sim_p\lambda$ and $\gamma\sim_p\mu$. If $\Ext^1_{\Des{n}{K}}(M_{\delta, K}, M_{\gamma, K})\neq 0$, then
$\Ext^1_{\Des{n}{\FF}}(M_{\lambda, \FF}, M_{\mu, \FF})\neq 0$. In other words,  if there is an arrow $\delta\to \gamma$ in $Q_{n,\infty}$, then there is an arrow $\lambda\to \mu$ in $Q_{n,p}$.
\end{cor}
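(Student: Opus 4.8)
The plan is to realise everything over the $p$-modular system $(\cO, K, \FF)$ of \Cref{SS:reductionp}, with $\cO = \Z_{(p)}$, $K=\Q$ and $\FF = \Fp$, and to apply the base-change comparison of \Cref{L:CPS} to the $\cO$-algebra $B := \Des{n}{\cO}$, which is finitely generated and free of rank $2^{n-1}$ over $\cO$. The crucial point is that the rank-one $\cO$-modules $M_{\delta,\cO}$ and $M_{\gamma,\cO}$ serve as $\cO$-forms that link the two Ext groups appearing in the statement. On the one hand, by their very construction $M_{\delta,\cO}\otimes_\cO K \cong M_{\delta,K}$ and $M_{\gamma,\cO}\otimes_\cO K \cong M_{\gamma,K}$. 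On the other hand, since $\delta\sim_p\lambda$ and $\gamma\sim_p\mu$, the identification of simple modules under reduction recalled above (namely $M_{\delta,\cO}\otimes_\cO\FF$ depends only on the $p$-equivalence class of $\delta$) gives $M_{\delta,\cO}\otimes_\cO\FF \cong M_{\lambda,\FF}$ and $M_{\gamma,\cO}\otimes_\cO\FF \cong M_{\mu,\FF}$.

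Granting these identifications, I would carry out a two-step base-change chase. First, pass from $K$ down to $\cO$: by \eqref{Eq:Ext1} applied to the $\cO$-free modules $M_{\delta,\cO}, M_{\gamma,\cO}$ we have $\Ext^1_B(M_{\delta,\cO}, M_{\gamma,\cO})_K \cong \Ext^1_{\Des{n}{K}}(M_{\delta,K}, M_{\gamma,K})$, whose right-hand side is nonzero by hypothesis. Hence the finitely generated $\cO$-module $\Ext^1_B(M_{\delta,\cO}, M_{\gamma,\cO})$ is itself nonzero, since its base change to $K$ does not vanish. Second, pass from $\cO$ down to $\FF$ using \Cref{L:CPS}(iv) with $n=1$: a nonzero $\Ext^1_B(M_{\delta,\cO}, M_{\gamma,\cO})$ forces $\Ext^1_{B_\FF}(M_{\delta,\cO}\otimes_\cO\FF,\, M_{\gamma,\cO}\otimes_\cO\FF)\neq 0$, which by the $\FF$-identifications above is precisely $\Ext^1_{\Des{n}{\FF}}(M_{\lambda,\FF}, M_{\mu,\FF})\neq 0$. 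This is the asserted conclusion. The final ``in other words'' reformulation is then immediate: an arrow between two vertices of an Ext quiver is by definition the non-vanishing of $\Ext^1$ between the corresponding simple modules, and when $p=\infty$ we have $\pReg(n)=\P(n)$, so $\delta,\gamma$ do label vertices of $Q_{n,\infty}$.

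The steps are routine once \Cref{L:CPS} is in hand, so I do not anticipate a substantive obstacle. The only two points that require care are: (a) confirming that the reduction $M_{\delta,\cO}\otimes_\cO\FF\cong M_{\lambda,\FF}$ genuinely holds over the local ring $\cO=\Z_{(p)}$, which is exactly where the hypotheses $\delta\sim_p\lambda$ and $\gamma\sim_p\mu$ enter, and which follows because the simple modules are cut out by the integer-valued Young characters $\varphi^{q,\cO}$ whose reductions modulo $\pi$ depend only on the $p$-equivalence class; and (b) verifying the finiteness and $\cO$-freeness hypotheses of \Cref{L:CPS}, which hold since $B=\Des{n}{\cO}$ is $\cO$-free of rank $2^{n-1}$ and each $M_{\delta,\cO}$ is $\cO$-free of rank one.
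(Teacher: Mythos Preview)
Your proposal is correct and follows essentially the same approach as the paper: use \eqref{Eq:Ext1} to lift the nonvanishing $\Ext^1$ from $K$ to $\cO$, then apply \Cref{L:CPS}(iv) with $n=1$ to descend to $\FF$, using the identifications $M_{\delta,\cO}\otimes_\cO\FF\cong M_{\lambda,\FF}$ and $M_{\gamma,\cO}\otimes_\cO\FF\cong M_{\mu,\FF}$. Your write-up is simply more explicit about the $\cO$-freeness checks and the reduction-modulo-$p$ identification, which is fine.
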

\begin{proof} Notice that $\Des{n}{\cO}$ is $\cO$-free and $M_{\delta,\cO}$ is $\cO$-free of rank $1$ for any $\delta\in\P(n)$. By (\ref{Eq:Ext1}) and our assumption, $\Ext^1_{\Des{n}{\cO}}(M_{\delta, \cO}, M_{\gamma, \cO})\neq 0$. By Lemma \ref{L:CPS}(iv) with $n=1$, we have $\Ext^1_{\Des{n}{\FF}}(M_{\lambda, \FF}, M_{\mu, \FF})\neq 0$ as $M_{\lambda,\FF}\cong M_{\delta, \cO}\otimes_\cO \FF$  and $M_{\mu,\FF}\cong M_{\gamma, \cO}\otimes_\cO \FF$.
\end{proof}

Since the $\cO$-forms of the simple $\Des{n}{K}$-modules have rank one, we have seen that the reduction modulo $p$ of them is uniquely determined by $M_{\lambda,\FF}\cong M_{\mu,\cO}\otimes_\cO \FF$ if $\mu\sim_p \lambda\in\pReg(n)$. On the other hand, let $\mu\in\P(n)$ and consider the projective module $P_{\mu,K}=\Des{n}{K}e_{\mu,K}$. Let $a\in\NN_0$ be such that $e':=p^ae_{\mu,K}\in\Des{n}{\cO}$ and $P_{\mu,\cO}=\Des{n}{\cO}e'$. Since $P_{\mu,\cO}$ is an $\cO$-submodule of $\Des{n}{\cO}$ and $\cO$ is a principal ideal domain, the module $P_{\mu,\cO}$ is $\cO$-free and hence is an $\cO$-form of $P_{\mu,K}$.

We now make an observation which is crucial for the proof of the next lemma. When $n\geq 3$ and $p>n$, $Q_{n,p}$ is acyclic with the unique longest path of length $n-2$:
\begin{align}\label{Eq:longestpath}
\beta: 21^{n-2}\to 31^{n-3}\to\cdots \to n.
\end{align} Since the nilpotency index of $\Des{n}{\F}$ is $n-1$ by Theorem \ref{T:radlen}, using \cite[proof of Corollary 3.5]{Atkinson:1992a}, we see that $\rad^{n-2}(\Des{n}{\F})$ is one-dimensional $\F$-spanned by $w^{n-2}$ where
\begin{align}\label{Eq:w}
w:=\Xi^{(n-1,1)}-\Xi^{(1,n-1)}.
\end{align}

\begin{lem}\label{L:projmodp} Let $\mu\in\P(n)$ and fix an $\cO$-form $P_{\mu,\cO}$ of $P_{\mu,K}$. Furthermore, let $T:=P_{\mu,\cO}\otimes_\cO \FF$ and $\lambda\in\pReg(n)$ such that $\lambda\sim_p\mu$, so that  $M_{\lambda,\FF}\cong M_{\mu,\cO}\otimes_\cO \FF$.
\begin{enumerate}[(i)]
\item The module $T$ is cyclic.
\item If $(T:M_{\lambda,\FF})=1$, then $T$ is indecomposable.
\item If $n\geq 3$ and $\mu=(2,1^{n-2})$, then $T$ is uniserial with the composition factors $M_{\lambda^{(i)},\FF}$ from top to bottom as $i$ runs through $0,\ldots,n-2$ where
$\lambda^{(i)}\sim_p (2+i,1^{n-2-i})$.
\end{enumerate}
\end{lem}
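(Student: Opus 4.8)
The plan is to establish the three assertions about the module $T = P_{\mu,\cO}\otimes_\cO\FF$ by combining the one-dimensionality of the simple modules with the reduction-modulo-$p$ machinery from \Cref{L:CPS}. Throughout I would work with the $\cO$-form $P_{\mu,\cO} = \Des{n}{\cO}e'$ where $e' = p^a e_{\mu,K}$, as set up just before the statement.

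\textbf{Part (i).} The module $P_{\mu,\cO}$ is by construction a cyclic $\Des{n}{\cO}$-module, generated by $e'$. Tensoring with $\FF$ is right exact, so the generator $e'\otimes 1$ still generates $T = P_{\mu,\cO}\otimes_\cO\FF$ as a $\Des{n}{\FF}$-module. Hence $T$ is cyclic; this step is immediate.

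\textbf{Part (ii).} The key observation is that since $\Des{n}{\FF}$ is basic, every simple module is one-dimensional, so $\dim_\FF e_{\lambda,\FF}T = (T : M_{\lambda,\FF})$ for each $\lambda$. First I would identify which simple appears in the top of $T$: because $T$ is cyclic, $T/\rad(T)$ is a single simple module, and since $P_{\mu,\cO}$ is the $\cO$-form of the projective cover of $M_{\mu,K}$, the top of $T$ is $M_{\lambda,\FF}$ with $\lambda\sim_p\mu$. Now suppose $T = T_1\oplus T_2$ is a nontrivial decomposition. Each summand is cyclic (a summand of a cyclic module over a basic algebra has simple top), so each contributes at least one copy of some simple to the top; in particular each summand $T_i$ with nonzero top $M_{\lambda,\FF}$ forces $(T:M_{\lambda,\FF})\geq 2$ once we account for the fact that the top of a cyclic module is simple and the whole top of $T$ is $M_{\lambda,\FF}$. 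Since the hypothesis gives $(T:M_{\lambda,\FF})=1$, there can be only one summand with top $M_{\lambda,\FF}$ and any other summand would have to have zero top, hence be zero. Therefore $T$ is indecomposable. The mild subtlety here is to argue cleanly that a cyclic module with $\lambda$-multiplicity one cannot split off a nonzero complementary summand; this follows because the top of $T$ is the simple $M_{\lambda,\FF}$ alone.

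\textbf{Part (iii).} This is the substantive case and the main obstacle. Here $\mu = (2,1^{n-2})$, so $M_{\mu,K} = M_{(2,1^{n-2}),K}$, and I would use \Cref{T:OrdExtquiver} together with \Cref{C:Ext1Descent} to control the composition factors. In characteristic zero (or $p>n$), the partition $21^{n-2}$ is the source of the unique longest path $\beta$ of length $n-2$ displayed in \Cref{Eq:longestpath}, and the structure of $P_{(2,1^{n-2}),K}$ is governed by this path: its composition factors are exactly $M_{\delta^{(i)},K}$ for $\delta^{(i)} = (2+i,1^{n-2-i})$, $i = 0,\ldots,n-2$, each with multiplicity one, arranged uniserially. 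I would verify the composition factors of $T$ by computing $\dim_\FF e_{\lambda^{(i)},\FF}T$ via the reduction-mod-$p$ relationship and checking each equals one, so $T$ has the claimed factors $M_{\lambda^{(i)},\FF}$ where $\lambda^{(i)}\sim_p(2+i,1^{n-2-i})$. For uniseriality, the decisive input is that $\rad^{n-2}(\Des{n}{\FF})$ is one-dimensional, spanned by $w^{n-2}$ with $w$ as in \Cref{Eq:w} (this uses \Cref{T:radlen} and the cited argument from \cite{Atkinson:1992a}). Since each radical layer of the cyclic module $T$ is a quotient of the corresponding radical layer of $\Des{n}{\FF}$ acting on the generator, and the total length of $T$ equals $n-1$ factors matching the length of the longest path, the radical series must descend one composition factor at a time, forcing $T$ to be uniserial with the factors ordered as the path $\beta$ dictates, namely $M_{\lambda^{(i)},\FF}$ from top to bottom as $i$ runs $0,\ldots,n-2$. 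The hard part will be pinning down that the radical layers each have length exactly one rather than merely bounding the total length; I expect this to rest on the interplay between the one-dimensionality of $\rad^{n-2}$ and the fact that the action of $w$ moves the generator down the path $\beta$ injectively at each step.
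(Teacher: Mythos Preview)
Part (i) matches the paper.

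For part (ii) there is a real gap. The claim ``because $T$ is cyclic, $T/\rad(T)$ is a single simple module'' is false in general: the regular module of $\FF\times\FF$ is cyclic with two simple summands in its top. Likewise ``a summand of a cyclic module over a basic algebra has simple top'' is not a valid general fact. Your attempted justification that the top must be $M_{\lambda,\FF}$ because $P_{\mu,\cO}$ is an $\cO$-form of a projective cover does not follow automatically under reduction mod $p$. The paper avoids all of this by computing the endomorphism ring: since $T$ is cyclic with generator $m$, any $\varphi\in\End_{\Des{n}{\FF}}(T)$ is determined by $\varphi(m)$; commuting $\varphi$ with the primitive idempotent $e_{\lambda,\FF}$ and using $\dim_{\FF} e_{\lambda,\FF}T=(T:M_{\lambda,\FF})=1$ forces $\varphi(m)$ to be a scalar multiple of $m$, so $\End(T)\cong\FF$ and $T$ is indecomposable. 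This argument never needs to know the top of $T$.

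For part (iii) your outline points in the right direction but stays vague exactly where you flag ``the hard part''. The paper does not argue abstractly about radical layers. Instead it exhibits an explicit $\cO$-basis $\{w^j e' : 0\le j\le n-2\}$ of $P_{(2,1^{n-2}),\cO}$ with $w=\Xi^{(n-1,1)}-\Xi^{(1,n-1)}$ as in \Cref{Eq:w}, sets $N_i$ to be the $\cO$-span of $\{w^j e' : j\ge i\}$, and then simply tensors the chain $0=N_{n-1}\subset\cdots\subset N_0=P_{(2,1^{n-2}),\cO}$ with $\FF$. Each successive quotient $N_i\otimes\FF/N_{i+1}\otimes\FF$ is visibly one-dimensional and isomorphic to $M_{\lambda^{(i)},\FF}$, which gives uniseriality and the ordered list of composition factors at once. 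Your radical-layer sketch would still owe the reader a proof that each layer has length exactly one; the explicit $w$-basis delivers this for free.
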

\begin{proof} For part (i), since $P_{\mu,\cO}$ is $\cO$-free, it has an $\cO$-basis $\{\Xi^qe':q\in \Psi\}$ for some subset $\Psi$ of $\C(n)$. Therefore $T$ is cyclic generated by $f:=e'\otimes \FF\in \Des{n}{\FF}$. For part (ii), consider a module homomorphism $\varphi:T\to T$. Since $T$ is cyclic, $\varphi$ is determined by the image of the generator $m\in T$. This must satisfy $e\varphi(m) = \varphi(em)$ for any idempotent $e\in \Des{n}{\FF}$. Taking $e=e_{\lambda}$, by the assumption, $\varphi(m)$ is a scalar multiple of $m$ and the endomorphism algebra is isomorphic to $\FF$. Thus $T$ is indecomposable. By Theorem \ref{T:radlen}, the radical length of $\Des{n}{\FF}$ is $n-1$ and the unique path $\beta$ of length $n-1$ in $Q_{n,\infty}$ is given as in (\ref{Eq:longestpath}). Since $\Des{n}{K}\cong KQ_{n,\infty}/I$, we see that $\beta$ must survive when $I$ is factored out, and also that it lives in $P_{21^{n-2},K}$ via the isomorphism. Let $w\in\rad(\Des{n}{\cO})$ be given as in (\ref{Eq:w}). The $\cO$-form $P_{21^{n-2},\cO}$ has $\cO$-basis $\{w^je': j\in [0,n-2]\}$. For each $i\in [0,n-1]$, let $N_i$ be the submodule of $P_{21^{n-2},\cO}$ $\cO$-spanned by $\{w^je': j\in [i,n-2]\}$. Then $T$ is uniserial with submodules \[0=N_{n-1}\otimes_\cO \F\subseteq N_{n-2}\otimes_\cO \F\subseteq \cdots\subseteq N_0\otimes_\cO \F=T\] and, furthermore, $N_i\otimes_\cO F/N_{i+1}\otimes_\cO F\cong M_{\lambda^{(i)},\F}$ for each $i\in [0,n-2]$.
\end{proof}

\section{The Bergeron-Garsia-Reutenauer map}\label{S:BGRmap}

Recall that $\cO$ is a commutative ring $\cO$ with 1. Let $n\geq s\geq 1$. For each $q\vDash n$ and $i\in [1,\ell(q)]$, if $q_i\geq s$, let $q^{(i)}$ be the composition of $n-s$ obtained from $q$ by subtracting the $i$th component $q_i$ of $q$ by $s$ and then removing the 0 at the $i$th position if $q_i=s$. Define an $\cO$-linear map $\Delta_s:\Des{n}{\cO}\to \Des{n-s}{\cO}$ by \[\Delta_s(\Xi^q)=\sum_{q_i\geq s} \Xi^{q^{(i)}}.\] For example,  $\Delta_1(\Xi^{(2,1)})=\Xi^{(2)}+\Xi^{(1,1)}$ and $\Delta_2(\Xi^{(2,1)})=\Xi^{(1)}$. In \cite{Bergeron/Garsia/Reutenauer:1992a}, Bergeron-Garsia-Reutenauer showed that, over $\cO=\Q$, $\Delta_s$ is a surjective algebra homomorphism.

\begin{thm}\label{T: BGR hom} Over  an arbitrary commutative ring $\cO$ with $1$, the map $\Delta_s:\Des{n}{\cO}\to\Des{n-s}{\cO}$ is a surjective algebra homomorphism.
\end{thm}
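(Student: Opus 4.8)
The plan is to reduce the statement over an arbitrary $\cO$ to the integral case $\cO = \Z$ and then import the rational result of Bergeron--Garsia--Reutenauer \cite{Bergeron/Garsia/Reutenauer:1992a}. Since the structure constants $|N^s_{r,q}|$ of \Cref{T: GR 1.1} are integers and $\Des{n}{\cO}$ is $\cO$-free on $\{\Xi^q : q \vDash n\}$, we have $\Des{n}{\cO} \cong \Des{n}{\Z} \otimes_\Z \cO$ as $\cO$-algebras, and likewise in degree $n-s$. The defining formula $\Delta_s(\Xi^q) = \sum_{q_i \geq s} \Xi^{q^{(i)}}$ has coefficients in $\{0,1\} \subseteq \Z$, so the map over $\cO$ is precisely the base change along $\Z \to \cO$ of the corresponding integral map $\Delta_s^\Z : \Des{n}{\Z} \to \Des{n-s}{\Z}$. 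Because $-\otimes_\Z \cO$ is right exact and carries $\Z$-algebra homomorphisms to $\cO$-algebra homomorphisms, it suffices to prove that $\Delta_s^\Z$ is a surjective algebra homomorphism.

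For multiplicativity over $\Z$ I would argue by restriction from $\Q$. By Solomon's theorem, $\Des{n}{\Z}$ is a subring of $\Des{n}{\Q}$ (with the multiplication being the restriction), and similarly $\Des{n-s}{\Z} \subseteq \Des{n-s}{\Q}$; moreover $\Delta_s^\Z$ is the restriction of $\Delta_s^\Q$, as both are given by the same formula on the basis $\{\Xi^q\}$. Since $\Delta_s^\Q$ is an algebra homomorphism by \cite{Bergeron/Garsia/Reutenauer:1992a}, for $x, y \in \Des{n}{\Z}$ we get $\Delta_s^\Z(xy) = \Delta_s^\Q(xy) = \Delta_s^\Q(x)\,\Delta_s^\Q(y) = \Delta_s^\Z(x)\,\Delta_s^\Z(y)$, the products being computed in $\Des{n-s}{\Q}$ but landing in $\Des{n-s}{\Z}$. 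Thus $\Delta_s^\Z$ is multiplicative, and it is unital since $\Delta_s(\Xi^{(n)}) = \Xi^{(n-s)}$.

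Surjectivity over $\Z$ does not follow formally from the rational statement, so I would establish it by a triangularity argument with leading coefficient $1$. For $t = (t_1, \ldots, t_k) \vDash n-s$, put $\hat q = (t_1 + s, t_2, \ldots, t_k) \vDash n$. The first part $\hat q_1 = t_1 + s \geq s$ produces the term $\Xi^t$, while every remaining surviving term $\Xi^{\hat{q}^{(i)}}$ with $i \geq 2$ is indexed by a composition of $n-s$ whose first part equals $t_1 + s > t_1$; that is,
\[ \Delta_s^\Z(\Xi^{\hat q}) = \Xi^t + \big(\text{integral combination of } \Xi^{t'} \text{ with } t'_1 > t_1\big). \]
Since the first part of a composition of $n-s$ is at most $n-s$, with $\Delta_s^\Z(\Xi^{(n)}) = \Xi^{(n-s)}$ as the top case, a downward induction on $t_1$ shows that every $\Xi^t$ lies in the image. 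Hence $\Delta_s^\Z$ is surjective, and base change finishes the proof over $\cO$.

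The only place where genuinely new work is needed is surjectivity over $\Z$: multiplicativity is inherited verbatim from the rational statement, but surjectivity is not (base change preserves surjections only in the forward direction $\Z \to \cO$, not $\Q \to \Z$), which is exactly why the explicit triangular computation above is required. A direct combinatorial proof of multiplicativity, by contrast, would demand an identity comparing the matrix counts $N^s_{r,q}$ in degrees $n$ and $n-s$, which is far less transparent than routing through \cite{Bergeron/Garsia/Reutenauer:1992a}.
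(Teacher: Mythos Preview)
Your proof is correct and follows essentially the same strategy as the paper's: multiplicativity is imported from \cite{Bergeron/Garsia/Reutenauer:1992a} (the paper simply remarks that their proof works verbatim over any $\cO$, while you route through the inclusion $\Des{n}{\Z}\hookrightarrow\Des{n}{\Q}$ and then base-change to $\cO$), and surjectivity is a triangularity argument with leading coefficient~$1$. The only tactical difference is in the induction for surjectivity: you add $s$ to the \emph{first} part of $t$ and induct downward on $t_1$, whereas the paper adds $s$ to the \emph{largest} part of $q$ (at its first occurrence) and inducts along the lexicographic order on the associated partition $\lambda(q)$. Both choices give error terms that are strictly higher in the chosen order with unit leading coefficient, so either works over $\Z$ and hence over any $\cO$.
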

\begin{proof} The proof of \cite[Theorem 1.1]{Bergeron/Garsia/Reutenauer:1992a} works over $\cO$. So $\Delta_s$ is an algebra homomorphism. For the surjectivity of $\Delta_s$, we argue by induction. We totally order $\P(n-s)$ by the lexicographic order $>$ as before. Since partitions are equivalence classes of compositions with respect to the equivalence relation $\approx$, we have a partial order on the compositions of $n-s$. Clearly $\Delta_s(\Xi^{(n)})=\Xi^{(n-s)}$. Let $q\vDash n-s$. By induction hypothesis, assume that $\Xi^r\in\im(\Delta_s)$ whenever $\lambda(r)>\lambda(q)$. Let $b$ be the first index such that $q_b\geq q_j$ for all $j$. Let $q'$ be the composition of $n$ obtained from $q$ by adding $s$ to the $b$th component of $q$ so that $q'_b=q_b+s$. By definition, $\Delta_s(\Xi^{q'})=\Xi^q+\epsilon$ where $\epsilon$ is either 0 or a sum of some $\Xi^r$'s such that $r$ has a component $q_b+s$. For such $\Xi^r$, we have $\lambda(r)>\lambda(q)$. As such, $\Xi^q=\Delta_s(\Xi^{q'})-\epsilon\in\im(\Delta_s)$.
\end{proof}

By Theorem \ref{T: BGR hom}, we have a functor
\[\Fun _s:\text{$\Des{n-s}{\cO}$-\textsf{mod}}\to \text{$\Des{n}{\cO}$-\textsf{mod}}\] induced by the surjective algebra homomorphism $\Delta_s$, that is, for any $\Des{n-s}{\cO}$-module $V$, we have $\Fun _s(V)=V$ as $\cO$-module and, for $q\in \C(n)$ and $v\in V$, we have \[\Xi^q \cdot v=\Delta_s(\Xi^q)v.\] Since $\Delta_s$ is surjective, if $V,W$ are $\Des{n-s}{\cO}$-modules such that $\Fun_s(V)\cong \Fun_s(W)$, then $V\cong W$.

Using Theorem \ref{T: BGR hom} and Lemma \ref{L: surj alg}(i), we get the following corollary.

\begin{cor}\label{C: BGR hom} If $\Des{n-s}{\F}$ has infinite (respectively, wild)  type then $\Des{n}{\F}$ has infinite (respectively, wild)  type.
\end{cor}

\section{Pullback along $\Delta_s$}\label{S:pullback}

In this section, we study the functor $\Fun_s$ given in the previous section more closely.  Given $\mu\in\pReg(n-s)$, Theorem \ref{T: pullback simple} identifies the partition $\lambda\in\pReg(n)$ such that $M_{\lambda,\F}\cong \Fun _s(M_{\mu,\F})$. The construction of the partition $\lambda$ is given in Definition \ref{D:pullbackpar}. As an application, we prove that the quiver $Q_{n-s,p}$ is a subquiver of $Q_{n,p}$ in Theorem \ref{T: full subquiver}. Let \[\pReg=\bigcup_{n\in\NN_0}\pReg(n).\] We begin with a definition.

\begin{defn}\label{D:pullbackpar} Let $s\in\NN$. Define a function $-^{\#s}:\pReg\to\pReg$ by, for each $p$-regular partition $\mu$, let $\mu^{\#s}$ be the $p$-regular partition such that $\mu^{\#s}\sim_p\mu\cont (s)$.
\end{defn}

\begin{eg}\
\begin{enumerate}
  \item If $\lambda(\mu\cont (s))$ is $p$-regular, then $\mu^{\#s}=\lambda(\mu\cont (s))$.
  \item Let $p=2$, $\mu=(3,2,1)$ and $s=1$. Then $\mu^{\#s}=(4,3)$.
  \item Let $p=3$, $\mu=(3,3,1,1)$ and $s=1$. Then $\mu^{\#s}=(9)$.
\end{enumerate}
\end{eg}

To prepare for the proof of Theorem \ref{T: pullback simple}, we need the next two lemmas.

\begin{lem}\label{L: simple min} Let $\cO$ be either $\Z$ or $\F$, $\lambda\in\pReg(n)$ and  $0\neq v\in M_{\lambda,\cO}$. Then \[\lambda=\min_{\wref}\{\mu\in\pReg(n):\Xi^\mu v\neq 0\}.\] 
\end{lem}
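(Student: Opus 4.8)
The plan is to translate the statement into a non-vanishing statement about Young characters and then read it off from \Cref{L: phi nonzero}. Since $M_{\lambda,\cO}$ is free of rank one over the domain $\cO$ with action $\Xi^\mu\cdot v=\varphi^{\mu,\cO}(\lambda)v$, and $v\neq 0$, for each composition $\mu$ we have $\Xi^\mu v\neq 0$ if and only if $\varphi^{\mu,\cO}(\lambda)\neq 0$. Hence the set under consideration equals
\[\{\mu\in\pReg(n):\varphi^{\mu,\cO}(\lambda)\neq 0\},\]
and the task is to show that $\lambda$ is its minimum with respect to $\wref$. Because $\wref$ is a partial order, it suffices to prove two things: that $\lambda$ lies in this set, and that $\lambda\wref\mu$ for every $\mu$ in the set.

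First I would show $\lambda$ is a lower bound. If $\varphi^{\mu,\cO}(\lambda)\neq 0$ then $\varphi^\mu(\lambda)\neq 0$ as an integer: for $\cO=\Z$ this is immediate, and for $\cO=\F$ it follows since $\varphi^{\mu,\F}(\lambda)=\varphi^\mu(\lambda)\cdot 1_\F$. Applying \Cref{L: phi nonzero}(i) with the composition $\mu$ in the superscript and $\lambda$ as the argument then gives $\lambda\wref\mu$, as required.

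Next I would show $\lambda$ itself lies in the set, i.e. $\varphi^{\lambda,\cO}(\lambda)\neq 0$. This is exactly where the genuine input is needed and is the only step I would be careful about, rather than pure bookkeeping. By \Cref{L: phi nonzero}(ii), since $\lambda\in\pReg(n)$ we have $\varphi^{\lambda,\F}(\lambda)\neq 0$; this settles the case $\cO=\F$ directly. For $\cO=\Z$, the same non-vanishing says $p\nmid\varphi^\lambda(\lambda)$, so in particular $\varphi^{\lambda,\Z}(\lambda)=\varphi^\lambda(\lambda)\neq 0$. Thus $\lambda$ belongs to the set for both choices of $\cO$.

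Combining the two steps, $\lambda$ is an element of $\{\mu\in\pReg(n):\Xi^\mu v\neq 0\}$ that is $\wref$-below every element of the set, hence it is the $\wref$-minimum. The main subtlety is to keep the two coefficient rings in step and to invoke part (ii) of \Cref{L: phi nonzero} precisely, so that $\lambda$ is guaranteed to survive in characteristic $p$ even though other Young character values $\varphi^\mu(\lambda)$ may vanish modulo $p$.
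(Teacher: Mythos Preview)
Your proof is correct and follows essentially the same approach as the paper: both translate the condition to $\varphi^{\mu,\cO}(\lambda)\neq 0$ and then invoke \Cref{L: phi nonzero}(i) for the lower bound and (ii) for membership. Your write-up is simply more explicit than the paper's in separating the $\cO=\Z$ and $\cO=\F$ cases when applying part~(ii).
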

\begin{proof} We have $\Xi^\mu v=\varphi^{\mu,\cO}(\lambda)v$. If $\varphi^{\mu,\cO}(\lambda)\neq 0$, then $\lambda\wref\mu$ by Lemma \ref{L: phi nonzero}(i).  By part (ii) of the same lemma, $\varphi^{\lambda,\cO}(\lambda)\neq 0$.
\end{proof}

\begin{lem}\label{L: pullback Z} Let $n\geq s\geq 1$ and $\mu\in\P(n)$. Then $\Fun _s(M_{\mu,\Z})\cong M_{\lambda,\Z}$ where $\lambda=\lambda(\mu\cont (s))$.
\end{lem}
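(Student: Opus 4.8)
The plan is to exploit that both modules in question are free $\Z$-modules of rank one on which every basis element $\Xi^q$ ($q\vDash n$) acts as a scalar, and to reduce the asserted isomorphism to an equality of those scalars. On $M_{\lambda,\Z}$ the element $\Xi^q$ acts by $\varphi^{q,\Z}(\lambda)$, and since $\lambda=\lambda(\mu\cont(s))$ has the same underlying partition as $\mu\cont(s)$ we have $\varphi^{q,\Z}(\lambda)=\varphi^{q,\Z}(\mu\cont(s))$. On $\Fun_s(M_{\mu,\Z})$, by the definition of the functor and of $\Delta_s$, it acts by $\Delta_s(\Xi^q)=\sum_{q_i\geq s}\varphi^{q^{(i)},\Z}(\mu)$. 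A rank-one free $\Z$-module on which $\Des{n}{\Z}$ acts through an algebra homomorphism $\Des{n}{\Z}\to\Z$ is determined up to isomorphism by that homomorphism, and since $\{\Xi^q:q\vDash n\}$ spans $\Des{n}{\Z}$, the lemma is equivalent to the numerical identity
\begin{equation*}
\varphi^q(\mu\cont(s))=\sum_{i:\,q_i\geq s}\varphi^{q^{(i)}}(\mu),\qquad q\vDash n.
\tag{$\ast$}
\end{equation*}

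To prove $(\ast)$ I would use the combinatorial description of $\varphi^q(\rho)$ recalled in \Cref{S: prelim}: for a fixed permutation $\sigma$ of cycle type $\rho$, the value $\varphi^q(\rho)$ counts the set compositions $(B_1,\ldots,B_k)$ of $[1,n]$ with $|B_i|=q_i$ (here $k=\ell(q)$) that are fixed by $\sigma$. A block $B_i$ is $\sigma$-stable if and only if it is a union of cycles of $\sigma$, so $\varphi^q(\rho)$ equals the number of ways to assign each cycle of $\sigma$ to one of the $k$ blocks so that the cycles placed in block $i$ have total length $q_i$. Packaging this as a generating function, if $\rho$ has $m_j$ parts equal to $j$ then
\[
\varphi^q(\rho)=\Bigl[\,x_1^{q_1}\cdots x_k^{q_k}\,\Bigr]\ \prod_{j\geq 1}\Bigl(\sum_{i=1}^{k}x_i^{\,j}\Bigr)^{m_j},
\]
since each of the $m_j$ (distinguishable) cycles of length $j$ independently chooses its block.

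With this in hand I would apply the formula to $\rho=\mu\cont(s)$, whose part-multiplicities are those of $\mu$ together with one extra part $s$; its generating function therefore factors as $\bigl(\sum_{i=1}^k x_i^{\,s}\bigr)\,P_\mu$, where $P_\mu$ is the analogous product for $\mu$. Expanding the factor $\sum_i x_i^s$ sorts the count by which block receives the distinguished $s$-cycle: the $i$-th summand contributes the coefficient $[\,x_1^{q_1}\cdots x_i^{q_i-s}\cdots x_k^{q_k}\,]P_\mu$, which vanishes unless $q_i\geq s$, and in that case equals exactly $\varphi^{q^{(i)}}(\mu)$ --- directly when $q_i>s$, and, when $q_i=s$, because forcing the exponent of $x_i$ in $P_\mu$ to be $0$ amounts to distributing the cycles of $\mu$ among the remaining $k-1$ blocks, which matches the deletion of the zero part in the definition of $q^{(i)}$. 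Summing over $i$ gives $(\ast)$, and hence the isomorphism.

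The only delicate point is the bookkeeping in this last step: one must check that ``removing the $0$ at position $i$ when $q_i=s$'' in the definition of $q^{(i)}$ corresponds precisely to extracting the $x_i^0$-coefficient (an empty block), and that pinning down one concrete $s$-cycle prevents any overcounting even when $\mu$ itself already contains parts equal to $s$. The generating-function formulation makes both transparent, so I expect no obstacle beyond careful indexing.
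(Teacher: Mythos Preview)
Your argument is correct. You reduce the isomorphism to the scalar identity $(\ast)$ and prove that identity by a clean generating-function count of cycle-to-block assignments; the treatment of the case $q_i=s$ (empty block versus deleted part) and of repeated parts equal to $s$ is handled correctly by the factorisation $(\sum_i x_i^{\,s})\,P_\mu$. One cosmetic point: with $\mu$ a partition of $n$, the compositions $q$ on the big side should run over $\C(n+s)$ rather than $\C(n)$, but this does not affect the argument.

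The paper takes a different route. Rather than verifying $(\ast)$, it first observes that $\Fun_s(M_{\mu,\Z})$ must be some $M_{\lambda,\Z}$ and then pins down $\lambda$ indirectly: computing $\Xi^{\mu\cont(s)}\cdot v$ and using positivity of Young character values shows the result is nonzero, whence \Cref{L: simple min} gives $\lambda\wref\mu\cont(s)$; in the other direction, expanding $\Delta_s(\Xi^\lambda)\cdot v\neq 0$ and invoking \Cref{L: phi nonzero}(i) forces $\mu\wref\lambda^{(k)}$ for some $k$, and a length comparison then yields $\ell(\lambda)=\ell(\mu\cont(s))$, hence $\lambda\approx\mu\cont(s)$. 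So the paper proves only enough about the character of $\Fun_s(M_{\mu,\Z})$ to locate $\lambda$ via weak-refinement bounds, whereas you establish the full identity $\varphi^q(\mu\cont(s))=\sum_{q_i\geq s}\varphi^{q^{(i)}}(\mu)$ for every $q$. Your approach is more elementary and self-contained, and the identity $(\ast)$ is of independent interest (it expresses exactly how $\Delta_s$ intertwines with the Solomon map $\sol$); the paper's approach is shorter given its preparatory lemmas and illustrates the utility of the minimality characterisation in \Cref{L: simple min}.
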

\begin{proof} Let $\Fun _s(M_{\mu,\Z})\cong M_{\lambda,\Z}$ be spanned by $v$. Through the map $\Delta_s$, we have
\begin{align*}z:=\Xi^{\mu\cont (s)}\cdot v=\varphi^{\mu}(\mu)v+\sum_{j\in [1,\ell(\mu)]} \varphi^{\mu^{(j)}\cont (s)}(\mu)v.
\end{align*} Observe that $z$ is a nonzero vector as $\varphi^{\mu}(\mu)> 0$ and $\varphi^{\mu^{(j)}\cont (s)}(\mu)\geq 0$. By Lemma \ref{L: simple min}, $\lambda\wref \mu\cont (s)$. On the other hand, since $\Xi^\lambda\cdot v=\varphi^{\lambda}(\lambda)v\neq 0$, there exists $k\in [1,\ell(\lambda)]$ such that $\varphi^{\lambda^{(k)}}(\mu)\neq 0$ and hence $\mu\wref \lambda^{(k)}$ by Lemma \ref{L: phi nonzero}(i). In particular, we must have $\ell(\lambda^{(k)})\leq \ell(\mu)<\ell(\lambda)$. This forces $\ell(\lambda^{(k)})=\ell(\mu)=\ell(\lambda)-1$ and hence $\ell(\lambda)=\ell(\mu\cont (s))$. Therefore $\lambda\approx \mu\cont (s)$.
\end{proof}

We are now ready to state and prove the  main result of this section.

\begin{thm}\label{T: pullback simple} Let $s\in\NN$, $n\geq s$ and $\mu\in\pReg(n)$. Then $\Fun _s(M_{\mu,\F})\cong M_{\mu^{\#s},\F}$.
\end{thm}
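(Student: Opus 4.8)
The plan is to reduce the statement over $\F$ to the statement over $\Z$ proved in \Cref{L: pullback Z}, and then to identify the resulting partition with $\mu^{\#s}$ using the defining property of the $-^{\#s}$ operation. First I would recall that the simple $\Des{n}{\F}$-module $\Fun_s(M_{\mu,\F})$ is one-dimensional, so it must be isomorphic to $M_{\nu,\F}$ for a unique $\nu\in\pReg(n)$; the task is precisely to show $\nu=\mu^{\#s}$. The key link is the compatibility of $\Fun_s$ with reduction modulo $p$: since $\Delta_s$ is defined over $\cO$ by the same formula on the $\Xi^q$ basis, the functor $\Fun_s$ commutes with $-\otimes_\cO\FF$. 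Concretely, for $\delta\in\P(n)$ with $\delta\sim_p\mu$ we have $M_{\mu,\F}\cong M_{\delta,\cO}\otimes_\cO\FF$, and applying $\Fun_s$ gives $\Fun_s(M_{\mu,\F})\cong \Fun_s(M_{\delta,\cO})\otimes_\cO\FF$.

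Next I would invoke \Cref{L: pullback Z} (applied over $\cO$, since its proof only uses the nonnegativity and the combinatorial properties of the Young characters $\varphi^{q,\cO}$, which hold whenever $\cO=\Z$ or $\cO=\Z_{(p)}$) to conclude $\Fun_s(M_{\delta,\cO})\cong M_{\kappa,\cO}$ where $\kappa=\lambda(\delta\cont(s))$, that is, $\kappa$ is the partition obtained by rearranging the parts of $\delta$ together with an extra part $s$. Reducing modulo $p$, we obtain $\Fun_s(M_{\mu,\F})\cong M_{\kappa,\cO}\otimes_\cO\FF\cong M_{\nu,\FF}$ where $\nu\in\pReg(n)$ is the $p$-regular partition with $\nu\sim_p\kappa$. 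It therefore remains to show $\nu=\mu^{\#s}$, i.e.\ that $\kappa\sim_p\mu\cont(s)$ implies $\nu\sim_p\mu\cont(s)$.

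This last identification is where the definition of $-^{\#s}$ does all the work: by \Cref{D:pullbackpar}, $\mu^{\#s}$ is the unique $p$-regular partition with $\mu^{\#s}\sim_p\mu\cont(s)$. Since $\nu\sim_p\kappa=\lambda(\delta\cont(s))$ and $\delta\sim_p\mu$, I would verify that $p$-equivalence is preserved under concatenation with $(s)$, so that $\delta\cont(s)\sim_p\mu\cont(s)$ and hence $\nu\sim_p\mu\cont(s)$. By the uniqueness in \Cref{D:pullbackpar}, this gives $\nu=\mu^{\#s}$, completing the proof.

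The main obstacle I anticipate is making precise that $\Fun_s$ commutes with reduction modulo $p$ in a way that respects the specific $\cO$-form $M_{\delta,\cO}$, and checking that $p$-equivalence is stable under appending the part $(s)$ (equivalently, that rearranging the combined parts does not disturb the $p'$-cycle-type comparison underlying $\sim_p$). Both are essentially formal once one unwinds the definitions: the first follows because $\Delta_s$ is given by an integral formula on the $\Xi$-basis, and the second follows because the $p$-equivalence relation only compares $p'$-parts of cycle types, which are unaffected by adjoining a single fixed part to both partitions. I would state these two compatibilities as brief lemmas or in-line remarks rather than grinding through the cycle-type bookkeeping.
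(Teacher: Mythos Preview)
Your proposal is correct and follows essentially the same route as the paper's proof: verify that $\Fun_s$ commutes with base change to $\F$ (the paper does this by a direct four-line computation on $\Xi^\eta\cdot(v\otimes 1)$), apply \Cref{L: pullback Z}, and then identify the resulting partition via \Cref{D:pullbackpar}. The paper streamlines your argument by working over $\Z$ and simply taking $\delta=\mu$ (legitimate since $\mu\in\pReg(n)\subseteq\P(n)$), so that $\kappa=\lambda(\mu\cont(s))$ directly and your anticipated obstacle about stability of $\sim_p$ under concatenation with $(s)$ never arises.
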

\begin{proof} Let $M_{\mu,\Z}$ be $\Z$-spanned by $v$. Consider both $\Fun _s(M_{\mu,\Z}\otimes_\Z \F)$ and $\Fun _s(M_{\mu,\Z})\otimes_\Z \F$ as the same vector space as $M_{\mu,\Z}\otimes_\Z \F$ which is $\F$-spanned by $v\otimes 1$. For any composition $\eta$ of $n+s$, we have
\begin{align*}
  \Xi^\eta\cdot (v\otimes 1)&=\sum_{j\in[1,\ell(\eta)]}\Xi^{\eta^{(j)}} (v\otimes 1)
  =\sum_{j\in[1,\ell(\eta)]}\varphi^{\eta^{(j)},\F}(\mu) (v\otimes 1)\\
  &=\sum_{j\in[1,\ell(\eta)]}(\varphi^{\eta^{(j)}}(\mu)v)\otimes 1=(\Xi^\eta\cdot v)\otimes 1.
\end{align*} This shows that $\Fun _s(M_{\mu,\Z}\otimes_\Z \F)\cong \Fun _s(M_{\mu,\Z})\otimes_\Z \F$. By Lemma \ref{L: pullback Z}, we have $\Fun _s(M_{\mu,\Z})\otimes_\Z \F\cong M_{\lambda(\mu\cont (s)),\Z}\otimes_\Z \F= M_{\mu^{\#s},\Z}\otimes_\Z \F\cong M_{\mu^{\#s},\F}$.
\end{proof}

As a consequence, we get the following corollary.

\begin{cor}\label{C: s inj} The map $-^{\#s}$ is injective.
\end{cor}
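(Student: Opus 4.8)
The plan is to deduce injectivity of $-^{\#s}$ directly from \Cref{T: pullback simple}, together with the observation recorded immediately after \Cref{T: BGR hom}: since $\Delta_s$ is a surjective algebra homomorphism, the pullback functor $\Fun_s$ reflects isomorphism classes, i.e. $\Fun_s(V)\cong\Fun_s(W)$ implies $V\cong W$. The conceptual point is that $-^{\#s}$ is precisely the combinatorial shadow of $\Fun_s$ on simple modules, so injectivity of the labelling map reduces to the statement that $\Fun_s$ carries non-isomorphic simples to non-isomorphic modules.

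Concretely, suppose $\mu,\nu\in\pReg$ satisfy $\mu^{\#s}=\nu^{\#s}$. First I would match degrees: since $\mu^{\#s}\sim_p\mu\cont(s)$ we have $|\mu^{\#s}|=|\mu|+s$, and likewise $|\nu^{\#s}|=|\nu|+s$, so the equality $\mu^{\#s}=\nu^{\#s}$ forces $|\mu|=|\nu|=:n$. Hence both $\mu$ and $\nu$ lie in $\pReg(n)$, and the ensuing comparison of simple modules takes place inside the single algebra $\Des{n}{\F}$. Then \Cref{T: pullback simple} gives $\Fun_s(M_{\mu,\F})\cong M_{\mu^{\#s},\F}=M_{\nu^{\#s},\F}\cong\Fun_s(M_{\nu,\F})$.

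Applying the isomorphism-reflecting property of $\Fun_s$ yields $M_{\mu,\F}\cong M_{\nu,\F}$. Since the simple $\Des{n}{\F}$-modules $\{M_{\lambda,\F}:\lambda\in\pReg(n)\}$ are pairwise non-isomorphic (they are labelled bijectively by $\pReg(n)$), this forces $\mu=\nu$, which is exactly the injectivity claimed.

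I do not expect a genuine obstacle here: essentially all the content sits in \Cref{T: pullback simple}, and what remains is the routine principle that a functor reflecting isomorphisms produces an injective labelling map on pairwise distinct simple objects. The only point requiring any care is the degree-matching step, ensuring that $\mu^{\#s}=\nu^{\#s}$ actually places $\mu$ and $\nu$ in the same $\pReg(n)$ so that $M_{\mu,\F}$ and $M_{\nu,\F}$ are modules for one and the same descent algebra.
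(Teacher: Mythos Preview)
Your proof is correct and follows essentially the same approach as the paper: apply \Cref{T: pullback simple} to turn the combinatorial equality $\mu^{\#s}=\nu^{\#s}$ into an isomorphism $\Fun_s(M_{\mu,\F})\cong\Fun_s(M_{\nu,\F})$, then use that $\Fun_s$ reflects isomorphisms (because $\Delta_s$ is surjective) to conclude $M_{\mu,\F}\cong M_{\nu,\F}$ and hence $\mu=\nu$. Your degree-matching step, ensuring that $\mu$ and $\nu$ lie in the same $\pReg(n)$, is a worthwhile point of care that the paper leaves implicit by simply assuming $\mu,\eta\in\P_p(n)$ at the outset.
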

\begin{proof} Suppose that $\mu^{\#s}=\eta^{\#s}$ where $\mu,\eta\in\P_p(n)$. By Theorem \ref{T: pullback simple}, $\Fun _s(M_{\mu,\F})\cong M_{\mu^{\#s},\F}= M_{\eta^{\#s},\F}\cong\Fun _s(M_{\eta,\F})$. Thus, we have $M_{\mu,\F}\cong M_{\eta,\F}$, i.e., $\mu=\eta$.
\end{proof}

Since $\Delta_s:\Des{n}{\F}\to \Des{n-s}{\F}$ is surjective, by Lemma \ref{L:quiver}, we knew that the Ext quiver of $\Des{n-s}{\F}$ is a subquiver of the Ext quiver of $\Des{n}{\F}$. Theorem \ref{T: pullback simple} now shows how to identify the vertices in $Q_{n-s,p}$ as vertices in $Q_{n,p}$ using the map $-^{\#s}$. Therefore, we obtain the following theorem.

\begin{thm}\label{T: full subquiver} The quiver $Q_{n-s,p}$ is a subquiver of $Q_{n,p}$ via the identification which a vertex $\mu$ in $Q_{n-s,p}$ is identified with the vertex $\mu^{\#s}\in Q_{n,p}$.
\end{thm}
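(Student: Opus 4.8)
The plan is to read the statement off directly from two ingredients already in place: the surjectivity of $\Delta_s$ and the explicit pullback formula of \Cref{T: pullback simple}. Since the representation-theoretic content has been isolated in those earlier results, the work here reduces to matching two descriptions of the vertex set.

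First I would invoke \Cref{T: BGR hom} to record that $\Delta_s\colon \Des{n}{\F}\to \Des{n-s}{\F}$ is a surjective algebra homomorphism, and then apply \Cref{L:quiver} with $A=\Des{n}{\F}$ and $B=\Des{n-s}{\F}$. This gives at once that $Q_{n-s,p}$ is a subquiver of $Q_{n,p}$, and it also handles the arrow-preservation part: as explained in the discussion preceding \Cref{L:quiver}, an arrow $\mu\to\nu$ in $Q_{n-s,p}$ corresponds to an indecomposable $\Des{n-s}{\F}$-module of radical length two, which remains an indecomposable $\Des{n}{\F}$-module of radical length two under restriction along $\Delta_s$ and therefore produces the corresponding arrow in $Q_{n,p}$.

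What remains is to identify the vertices explicitly, and this is exactly the role of \Cref{T: pullback simple}. Under the construction behind \Cref{L:quiver}, a vertex of $Q_{n-s,p}$ labelled by $\mu\in\pReg(n-s)$ is identified with the vertex of $Q_{n,p}$ labelling the simple $\Des{n}{\F}$-module obtained by regarding $M_{\mu,\F}$ as a $\Des{n}{\F}$-module through $\Delta_s$; but this restriction of scalars is precisely the functor $\Fun_s$. Applying \Cref{T: pullback simple} (with $n$ there replaced by $n-s$), we get $\Fun_s(M_{\mu,\F})\cong M_{\mu^{\#s},\F}$, so the abstract identification is literally $\mu\mapsto\mu^{\#s}$. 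That this gives a genuine (injective) embedding of vertex sets follows either from \Cref{C: s inj}, or automatically from the fact that non-isomorphic simple $\Des{n-s}{\F}$-modules stay non-isomorphic after pullback along the surjection $\Delta_s$.

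The only point that needs care is confirming that the abstract vertex identification supplied by \Cref{L:quiver} --- ``view a simple $B$-module as a simple $A$-module'' --- coincides with the functor $\Fun_s$, so that the closed-form description $\mu\mapsto\mu^{\#s}$ of \Cref{T: pullback simple} transfers verbatim. This is immediate, since $\Fun_s$ is by definition restriction of scalars along $\Delta_s$, and so no further computation is required; the theorem is the synthesis of \Cref{T: BGR hom}, \Cref{L:quiver}, and \Cref{T: pullback simple}.
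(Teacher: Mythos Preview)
Your proposal is correct and follows essentially the same approach as the paper: the paper's proof is just the one-line observation that the surjectivity of $\Delta_s$ together with \Cref{L:quiver} gives the subquiver inclusion, while \Cref{T: pullback simple} identifies the vertex map as $\mu\mapsto\mu^{\#s}$. Your write-up is somewhat more detailed (you make explicit the role of $\Fun_s$ as restriction of scalars and note the injectivity via \Cref{C: s inj}), but the argument is the same.
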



Explicit idempotents may be of independent interest. The following describes the effect of the algebra homomorphism $\Delta_s$ on a particular full set of orthogonal primitive idempotents. For this, we remind the reader about the algebra homomorphism $\theta$ in Theorem \ref{T: Sol epi}.

\begin{lem}\label{L: Delta idem} Let $\{e_{\lambda,\F}:\lambda\in\pReg(n)\}$ be a complete set of primitive orthogonal idempotents of $\Des{n}{\F}$ such that $\sum_\lambda e_{\lambda,\F}=1$ and $\sol(e_{\lambda,\F})=\Char_{\lambda,\F}$ and let \[\Upsilon=\{\mu^{\#s}:\mu\in\pReg(n-s)\}.\] Then $\Delta_s(e_{\lambda,\F})=0$ if $\lambda\not\in\Upsilon$ and  $\{\Delta_s(e_{\lambda,\F}):\lambda\in \Upsilon\}$ is a complete set of primitive orthogonal idempotents of $\Des{n-s}{\F}$ such that $\sum_{\lambda\in\Upsilon} \Delta_s(e_{\lambda,\F})=1$ and $\sol(\Delta_s(e_{\lambda,\F}))=\Char_{\mu,\F}$ where $\mu^{\#s}=\lambda$.
\end{lem}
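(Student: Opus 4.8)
The plan is to pin down each image $\Delta_s(e_{\lambda,\F})$ through its value under the semisimplification map $\sol_{n-s,\F}$, and then read off every assertion from a single explicit formula. First I would record the facts that come for free. The map $\Delta_s$ is a \emph{unital} surjective algebra homomorphism: indeed $\Delta_s(\Xi^{(n)})=\Xi^{(n-s)}$, and $\Xi^{(n)}$, $\Xi^{(n-s)}$ are the identities of $\Des{n}{\F}$, $\Des{n-s}{\F}$. Hence the $\Delta_s(e_{\lambda,\F})$ are automatically orthogonal idempotents of $\Des{n-s}{\F}$, and since $\sum_\lambda e_{\lambda,\F}=1$ they satisfy $\sum_\lambda\Delta_s(e_{\lambda,\F})=\Delta_s(1)=1$. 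So the only substantive points remaining are which images vanish, the value of $\sol_{n-s,\F}$ on the survivors, and their primitivity.

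The key computation is to evaluate $\sol_{n-s,\F}(\Delta_s(e_{\lambda,\F}))$ at each $\mu\in\pReg(n-s)$, exploiting that this value is exactly the scalar by which $\Delta_s(e_{\lambda,\F})$ acts on the one-dimensional simple $M_{\mu,\F}$. By the very definition of the pullback functor $\Fun_s$, that scalar coincides with the scalar by which $e_{\lambda,\F}$ acts on the $\Des{n}{\F}$-module $\Fun_s(M_{\mu,\F})$. Now \Cref{T: pullback simple} gives $\Fun_s(M_{\mu,\F})\cong M_{\mu^{\#s},\F}$ (with $\mu^{\#s}\in\pReg(n)$), and $e_{\lambda,\F}$ acts on the simple $\Des{n}{\F}$-module $M_{\mu^{\#s},\F}$ by $\sol_{n,\F}(e_{\lambda,\F})(\mu^{\#s})=\Char_{\lambda,\F}(\mu^{\#s})$. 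This is $1$ precisely when $\mu^{\#s}\sim_p\lambda$, that is when $\mu^{\#s}=\lambda$ (both being $p$-regular), and $0$ otherwise. Comparing the two evaluations yields the clean formula
\[
\sol_{n-s,\F}\bigl(\Delta_s(e_{\lambda,\F})\bigr)(\mu)=\delta_{\lambda,\,\mu^{\#s}},\qquad \mu\in\pReg(n-s).
\]

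From this formula everything follows. Since $\Ccl{n-s}{\F}$ is semisimple commutative with the $\Char_{\mu,\F}$ as its primitive idempotents, any idempotent of $\Ccl{n-s}{\F}$ is the characteristic function of the set of $\mu$ where it takes value $1$. Hence $\sol_{n-s,\F}(\Delta_s(e_{\lambda,\F}))=\Char_{\mu,\F}$ when $\lambda=\mu^{\#s}\in\Upsilon$, the preimage $\mu$ being unique by the injectivity of $-^{\#s}$ (\Cref{C: s inj}), while $\sol_{n-s,\F}(\Delta_s(e_{\lambda,\F}))=0$ when $\lambda\notin\Upsilon$. In the latter case $\Delta_s(e_{\lambda,\F})$ is an idempotent in $\ker\sol_{n-s,\F}=\rad(\Des{n-s}{\F})$ by \Cref{T: Sol epi}, and an idempotent in the nilpotent radical is $0$. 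In the former case the image $\Char_{\mu,\F}$ is a primitive idempotent of the semisimple quotient, so $\Delta_s(e_{\lambda,\F})$ is itself primitive; distinct $\lambda\in\Upsilon$ give distinct characteristic functions, so the $|\Upsilon|=|\pReg(n-s)|$ nonzero images exhaust the simple modules. Together with orthogonality and $\sum_{\lambda\in\Upsilon}\Delta_s(e_{\lambda,\F})=1$, this is exactly the assertion that $\{\Delta_s(e_{\lambda,\F}):\lambda\in\Upsilon\}$ is a complete set of primitive orthogonal idempotents.

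I expect the only delicate step to be the bookkeeping in the key computation: keeping straight that $\sol$-values are module-action scalars, that $\Fun_s$ reverses the direction of $\Delta_s$, and that $\Char_{\lambda,\F}(\mu^{\#s})$ collapses to a Kronecker delta because $p$-regular partitions in one $p$-class coincide. The primitivity conclusion relies on the standard fact that a nonzero idempotent of a finite-dimensional algebra is primitive if and only if its image in the semisimple quotient is; alternatively one can argue by counting, since a basic algebra admits exactly $|\pReg(n-s)|$ primitive idempotents in any decomposition of $1$ and we have produced that many nonzero pairwise orthogonal ones.
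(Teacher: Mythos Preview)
Your proof is correct and follows essentially the same approach as the paper: both compute $\sol_{n-s,\F}(\Delta_s(e_{\lambda,\F}))(\mu)=\delta_{\lambda,\mu^{\#s}}$ by interpreting it as the action scalar on $\Fun_s(M_{\mu,\F})\cong M_{\mu^{\#s},\F}$ via \Cref{T: pullback simple}, and then read off the conclusions. The only cosmetic difference is that you deduce $\Delta_s(e_{\lambda,\F})=0$ for $\lambda\notin\Upsilon$ from ``idempotent in the radical is zero'', whereas the paper obtains it by a counting argument after first exhibiting the complete set $\{\Delta_s(e_{\lambda,\F}):\lambda\in\Upsilon\}$.
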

\begin{proof} Apply the algebra homomorphism $\Delta_s$, we have that $f_\lambda:=\Delta_s(e_{\lambda,\F})$ is 0 or an idempotent, $f_\lambda f_\eta=0$ if $\lambda\neq \eta$ and $\sum_{\lambda}f_\lambda=1$. For any $\mu\in\pReg(n-s)$, by Theorem \ref{T: pullback simple}, we have $\Fun _s(M_{\mu,\F})\cong M_{\mu^{\#s},\F}$. Let $0\neq v\in M_{\mu^{\#s},\F}$. Then  \[\delta_{\lambda,\mu^{\#s}}v=\Char_{\lambda,\F}(\mu^{\#s})v=e_{\lambda,\F}\cdot v=f_\lambda v=\sol(f_\lambda)(\mu)v.\] This implies that \[\sol(f_\lambda)(\mu)=\left \{\begin{array}{ll} 1&\text{if $\mu^{\#s}=\lambda$,}\\ 0&\text{otherwise.}\end{array}\right .\] Since $-^{\#s}$ is injective by Corollary \ref{C: s inj}, we have $\sol(f_\lambda)=\Char_{\mu,\F}$ if $\mu^{\#s}=\lambda$. Also, since $\pReg(n-s)$ (or $\Upsilon$ by Corollary \ref{C: s inj}) labels the primitive orthogonal idempotents of $\Des{n-s}{\F}$ and we already get a complete set of primitive orthogonal idempotents $\{f_\lambda:\lambda\in \Upsilon\}$, we  must have $f_\lambda=0$ if $\lambda\not\in\Upsilon$.
\end{proof}

\bigskip

We end this section with an example illustrating  Theorem \ref{T: full subquiver}

\begin{eg} The quiver $Q_{5,2}$ can be seen as a subquiver (in blue) of $Q_{6,2}$ as below.
\[Q_{5,2}:\begin{tikzcd}
    &32\arrow[out=255,in=285,loop,distance=20pt]\arrow[dr,shift left=.4ex]\ar[dl]&\\
    5&&41\arrow[out=345,in=315,loop,distance=20pt]\arrow[out=60,in=30,loop,distance=20pt]\arrow[ul,shift left=0.8ex]\ar[ll]
\end{tikzcd} \hspace{1cm}
Q_{6,2}:\begin{tikzcd}
    &6\arrow[out=75,in=105,loop,distance=20pt]&\\
    &{\tbl{321}\arrow[out=255,in=285,loop,distance=20pt,color=blue]\arrow[dr,shift left=.4ex,color=blue]\ar[u]\ar[dl,color=blue]}&\\
    \tbl{51}\ar[uur]&&\tbl{42}\arrow[out=345,in=315,loop,distance=20pt,color=blue]\arrow[out=60,in=30,loop,distance=20pt,color=blue]\arrow[ul,shift left=0.8ex,color=blue]\ar[uul]\ar[ll,color=blue]
\end{tikzcd}\]
\end{eg}

\section{Proof of Theorem \ref{T: rep type}}\label{S: rep type}

In this section, we prove our main result Theorem \ref{T: rep type}. When $p=\infty$, the representation type of the descent algebras of type $\A$  has been studied earlier by Schocker \cite{Schocker:2004a} in which he proved that $\Des{n}{\F}$ is finite type if and only if $n\leq 5$ and wild otherwise. Our main result in this section deals with the $p<\infty$ case. In particular, the $p=\infty$ case may be seen as the asymptotic result of Theorem \ref{T: rep type} when $p\to\infty$ (this is a reason why we use the convention of $p=\infty$ instead of $p=0$).

For the remainder of this section, we assume that the characteristic $p$ of the field $\F$ is finite, $\cO$ is a local principal ideal domain with the maximal ideal $(\pi)$, $K$ is the field of fractions of $\cO$ and $\FF= \cO/(\pi)$. Recall that $Q_{n,p}$  denotes the Ext quiver of $\Des{n}{\F}$.
\bigskip

For easy reference, we restate our main result here:

\begin{mainthm} Assume $F$ is a field of characteristic $p<\infty$. The descent algebra $\Des{n}{\F}$ has finite representation type if and only if either
\begin{enumerate}[(i)]
  \item $p=2$ and $n\leq 3$,
  \item $p=3$ and $n\leq 4$, or
  \item $p\geq 5$ and $n\leq 5$.
\end{enumerate} Otherwise, it has wild representation type.
\end{mainthm}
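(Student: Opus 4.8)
The plan is to reduce the classification to a short list of small algebras and then to decide each one with the quiver-theoretic criteria collected above. The two driving facts are \Cref{C: BGR hom} and \Cref{L: surj alg}(i), applied to the surjection $\Delta_s\colon\Des{n}{\F}\to\Des{n-s}{\F}$ of \Cref{T: BGR hom}: wild (respectively infinite) type propagates \emph{upward} in $n$, while, by contraposition, finite type propagates \emph{downward}. Thus for each fixed $p$ the type is finite up to a threshold and wild beyond it, and it suffices to locate that threshold. I would therefore reduce \Cref{T: rep type} to the assertions that $\Des{3}{\F}$ is finite and $\Des{4}{\F}$ is wild when $p=2$; that $\Des{4}{\F}$ is finite and $\Des{5}{\F}$ is wild when $p=3$; and that $\Des{5}{\F}$ is finite and $\Des{6}{\F}$ is wild when $p\geq 5$.

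For the finite half, the cases with $p>n$ are immediate. By \Cref{C:nleq5} the algebra is the hereditary algebra $\F Q_{n,p}$, and \Cref{T:OrdExtquiver} shows that the underlying graph of $Q_{n,p}$ is a disjoint union of Dynkin diagrams (for instance $\Des{5}{\F}$ with $p\geq 7$ has quiver of type $\mathbb{A}_6\sqcup\mathbb{A}_1$), so \Cref{T: Gab} gives finiteness. The genuinely modular boundary cases $\Des{3}{\F}$ for $p=2$, $\Des{4}{\F}$ for $p=3$ and $\Des{5}{\F}$ for $p=5$ require more care. For $\Des{3}{\F}$ the nilpotency index is $2$ by \Cref{T:radlen}, hence $\rad^2=0$, and \Cref{T:separatedquiver}(i) settles finiteness by inspection of the separated quiver. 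For $\Des{4}{\F}$ with $p=3$ and $\Des{5}{\F}$ with $p=5$ one has $\rad^2\neq 0$, so the quiver alone is not enough: here I would produce a presentation $\F Q_{n,p}/I$, locating the arrows by \Cref{C:Ext1Descent}, controlling the composition multiplicities through $\wt C=D^\top C D$ of \Cref{T: APW}, and using \Cref{L:projmodp} to identify the uniserial reductions of the projectives, and then recognise the bound quiver algebra as a finite-type (string or Nakayama) algebra; \Cref{L:radicalemb} and \Cref{L: surj alg}(iii) are useful shortcuts when a radical embedding or a projective--injective summand is present.

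For the wild half the argument is uniform across the three cases. I would first determine the Ext quiver $Q_{n,p}$: \Cref{C:Ext1Descent} transports every arrow of the characteristic-zero quiver $Q_{n,\infty}$ (read off from \Cref{T:OrdExtquiver}) into $Q_{n,p}$ under $p$-fusion of the vertices, and the remaining modular arrows are extracted from the Cartan matrix $\wt C=D^\top C D$ (as in \Cref{Eg: D6 Cartanp}) together with the explicit idempotents of \cite{Lim:2023a}; for $\Des{6}{\F}$ with $p\geq 7$ the shape is already pinned down in \Cref{C:n6}. Since $\rad^2(\Des{n}{\F})\neq 0$ for these $n$, I would then pass to the quotient $\Des{n}{\F}/\rad^2$, which has the same Ext quiver but satisfies $\rad^2=0$; by \Cref{L: surj alg}(i) its wildness forces that of $\Des{n}{\F}$, and \Cref{T:separatedquiver} reduces the question to checking that the separated quiver of $Q_{n,p}$ is not a union of Dynkin and extended Dynkin diagrams (which also certifies genuine wildness rather than mere infiniteness). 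When a single vertex carries enough loops it is cleaner to truncate by its idempotent and apply \Cref{L: surj alg}(ii) to a local wild subalgebra instead.

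The principal obstacle is the exact determination of the modular Ext quivers, and, in the finite modular cases, of the relations. \Cref{C:Ext1Descent} only furnishes a lower bound on the arrows, so the extra arrows produced by $p$-fusion --- for example the loops that are responsible for the wildness of $Q_{4,2}$ when $p=2$ --- must be found by an independent computation of $\rad/\rad^2$, and this is precisely where the Cartan data and the idempotents of \cite{Lim:2023a} are needed. Once the quivers are in hand the wild cases fall out quickly from the separated-quiver criterion; but the finite cases with $\rad^2\neq 0$ cannot be decided from the quiver alone, and establishing a presentation and matching it to a known finite-type algebra is the genuine labour of the proof.
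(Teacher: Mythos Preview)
Your overall architecture---reduce via \Cref{C: BGR hom} to one ``finite'' and one ``wild'' boundary case per prime, handle the modular finite cases by presentation plus \Cref{L:radicalemb}/\Cref{L: surj alg}(iii), and handle the wild cases via \Cref{T:separatedquiver} or an idempotent truncation---is exactly the paper's, and for $p\leq 5$ it goes through essentially as you describe. There is, however, a genuine gap in your treatment of the wild case $n=6$, $p\geq 7$.

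For $p\geq 7$ one has $Q_{6,p}=Q_{6,\infty}$ (no loops, no multiple arrows). If you actually compute the separated quiver of the diagram in \eqref{Dg:Q60} you find that it is a disjoint union of Dynkin graphs: the component containing the source vertex $321$ is $\{41^2,\,51',\,321,\,42',\,3^2{}'\}$ of type $\mathbb{D}_5$, the component $\{51,\,6',\,42\}$ is $\mathbb{A}_3$, and the rest are $\mathbb{A}_2$'s and isolated points. By \Cref{T:separatedquiver} the quotient $\Des{6}{\F}/\rad^2$ therefore has \emph{finite} type, so passing to it yields no information about wildness. Since $Q_{6,\infty}$ has no loops, your idempotent-truncation alternative does not apply either. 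The wildness of $\Des{6}{\F}$ for $p\geq 7$ is not visible at the level of $\rad/\rad^2$; it lives in the longer paths (the nilpotency index is $5$).

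What the paper does instead is to use the full content of \Cref{C:n6}, not just the shape of the quiver: the single defining relation $\omega$ satisfies $\omega=e_{6,\F}\,\omega\, e_{2^21^2,\F}$, so after factoring out the ideal generated by $e_{6,\F}$ the relation disappears and $B:=\Des{6}{\F}/(e_{6,\F})$ is the \emph{hereditary} path algebra of $Q_{6,\infty}$ with the vertex $6$ deleted. In that quiver the vertex $321$ has four neighbours $51,\,42,\,3^2,\,2^21^2$, and $51$ has the further neighbour $41^2$; the underlying graph thus strictly contains $\widetilde{\mathbb{D}}_4$, so $B$ is wild by Theorems~\ref{T: Gab} and~\ref{T:tame}, and hence so is $\Des{6}{\F}$ by \Cref{L: surj alg}(i). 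Your plan needs this extra step (or something equivalent) for $p\geq 7$.
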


The proof of Theorem \ref{T: rep type} is obtained by using Corollary \ref{C: BGR hom} and the series of lemmas in this section below. Basically, for each prime $p$, we need to find the integer $n$ such that $\Des{n}{\F}$ and $\Des{n+1}{\F}$ have finite and wild types respectively.  To prove the series of lemmas, we shall combine the use of Cartan and decomposition matrices and partial information about the Ext quivers. In particular, in the next lemma, we provide two different proofs to demonstrate how these methods may be employed.

\begin{lem}\label{L: p2n3} Let $p=2$. The descent algebra $\Des{3}{\F}$ is finite type.
\end{lem}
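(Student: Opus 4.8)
The plan is to pin down the Ext quiver $Q_{3,2}$ explicitly and then exploit the fact that $\rad^2(\Des{3}{\F})=0$ to pass to the separated quiver. First I would record the combinatorial data: for $p=2$ a partition is $2$-regular exactly when its parts are distinct, so $\pReg(3)=\{(2,1),(3)\}$, and hence $\Des{3}{\F}$ has precisely two simple modules and dimension $2^{2}=4$. Since by \Cref{T:radlen} the nilpotency index of $\Des{3}{\F}$ is $n-1=2$, we have $\rad^2(\Des{3}{\F})=0$; this is the structural input that makes the whole argument short, because it forces $\rad(P_\lambda)$ to be semisimple for each projective indecomposable $P_\lambda$, so that the radical layers (and hence all arrows of $Q_{3,2}$) can be read off directly from the Cartan invariants.

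Next I would compute the Cartan matrix over $\FF=\mathbb{F}_2$. Using \Cref{T: APW}, $\wt{C}=D^\top C D$, where $C$ is the Cartan matrix of $\Des{3}{\Q}$ and $D$ is the decomposition matrix. Both inputs are immediate: the characteristic-zero quiver $Q_{3,\infty}$ (via \Cref{T:OrdExtquiver}) has the single arrow $(2,1)\to(3)$ with $(1^3)$ isolated, so $\Des{3}{\Q}$ is the corresponding path algebra and $C$ is the evident $3\times 3$ unitriangular matrix; and $D$ is determined by the $2$-equivalence classes, namely $(1^3)\sim_2(2,1)$ with $(3)$ alone. A short multiplication gives $\wt{C}=\left(\begin{smallmatrix}2&1\\0&1\end{smallmatrix}\right)$ in the ordering $(2,1)<(3)$. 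Reading this off: $P_{(3)}=M_{(3),\F}$ is simple, while $P_{(2,1)}$ has composition factors $M_{(2,1),\F}$ with multiplicity $2$ and $M_{(3),\F}$ once. Because $\rad^2=0$, the copy of $M_{(2,1),\F}$ beyond the top, together with $M_{(3),\F}$, constitute $\rad(P_{(2,1)})$, which is semisimple; equivalently $\Ext^1(M_{(2,1),\F},M_{(2,1),\F})\neq 0$ and $\Ext^1(M_{(2,1),\F},M_{(3),\F})\neq 0$. Hence $Q_{3,2}$ is the quiver with a loop at the vertex $(2,1)$ and one arrow $(2,1)\to(3)$, with $(3)$ a sink carrying nothing else.

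Finally, since $\rad^2(\Des{3}{\F})=0$ I would invoke \Cref{T:separatedquiver}(i). The separated quiver $Q'$ of $Q_{3,2}$ has vertices $(2,1),(3),(2,1)',(3)'$ and arrows $(2,1)\to(2,1)'$ and $(2,1)\to(3)'$ (the loop unfolds into an ordinary arrow to the primed copy), with $(3)$ isolated. Its underlying graph is $A_3\sqcup A_1$, a disjoint union of Dynkin diagrams, so $\Des{3}{\F}$ has finite representation type. The one point needing care---and the only place where a naive reading of $Q_{3,2}$ could mislead---is the loop: a quiver with a loop is never itself Dynkin, and it is precisely the hypothesis $\rad^2=0$ that lets us replace $Q_{3,2}$ by its separated quiver and thereby neutralise the loop. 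As an alternative, one can make $\rad^2=0$ explicit and identify $\Des{3}{\F}\cong \F Q_{3,2}/(\F Q_{3,2}^+)^2$, a radical-square-zero string algebra whose indecomposables are visibly finite in number; I would mention this second route but carry out the separated-quiver argument as the main proof.
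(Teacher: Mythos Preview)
Your argument is correct and coincides almost exactly with the paper's second (alternative) proof: compute $\wt C$ via \Cref{T: APW}, read off the Ext quiver using $\rad^2=0$, and apply the separated-quiver criterion. The paper additionally offers a first proof that bypasses the Cartan matrix by working directly with explicit idempotents $e_{3,\F},e_{21,\F}$ and the radical generators $u=\Xi^{1^3}$, $v=\Xi^{21}-\Xi^{12}$ to locate the arrows; your appeal to \Cref{T:radlen} for $\rad^2=0$ is a clean substitute for that computation.
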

\begin{proof} By Theorem \ref{T: Sol epi}, $\rad(\Des{3}{\F})$ is spanned by $u=\Xi^{111}$ and $v=\Xi^{21}-\Xi^{12}$. Since $\rad^2(\Des{3}{\F})=0$, $\Des{3}{\F}$ is two-nilpotent. Using the idempotents $e_{3,\F}=\Xi^3+\Xi^{21}+\Xi^{111}$ and $e_{21,\F}=\Xi^{21}+\Xi^{111}$ (see \cite[Appendix A]{Lim:2023a}), it is straightforward to check that $e_{21,\F}ue_{21,\F}=u$, $e_{3,\F}ve_{21,\F}=v$ and hence $Q_{3,2}$ is
\[\begin{tikzcd}
    3&21\arrow[l]\arrow[out=-15,in=15,loop,distance=20pt]
  \end{tikzcd}\]  The separated quiver of $Q_{3,2}$ is
\[\tiny{\begin{tikzcd}
    \circ&\circ&\circ\ar[l]\ar[r]&\circ
\end{tikzcd}}\] As such, $\Des{3}{\F}$ is finite type using Theorem \ref{T:separatedquiver}(i).

Since we also know both the Cartan and decomposition matrices of the descent algebras, there is an alternative proof. The Ext quiver $Q:=Q_{3,\infty}$ is
\[\begin{tikzcd}
    3&21\arrow[l]&1^3
\end{tikzcd}\] By Corollary \ref{C:nleq5}, $\Des{3}{K}\cong KQ$. With the lexicographic
order $(1^3) < (2,1) < (3)$, the Cartan matrix $C$ of $KQ$, the decomposition matrix $D$ with $p=2$ and the Cartan matrix $\wt C$ of $\Des{3}{\F}$ are, respectively,
\begin{align*}
C &= \begin{pmatrix} 1 & 0 & 0\cr 0 & 1 & 1\cr 0 & 0 & 1\end{pmatrix},&
D &=\begin{pmatrix} 1 & 0\cr 1 & 0 \cr 0 & 1\end{pmatrix},&
\wt C&=D^\top CD=\begin{pmatrix} 2 & 1 \cr 0 & 1\end{pmatrix}.
\end{align*} By Corollary \ref{C:Ext1Descent}, there is an arrow $3\leftarrow 21$ in $Q_{3,2}$. From $\wt C$, we see that $M_{3,\F}$ is projective and deduce that $\rad(P_{21,\F})$ is the direct sum of $M_{3,\F}$ and $M_{21,\F}$ (as $\rad(P_{21,\F})$ surjects onto $M_{3,\F}$) and hence there is a loop at the vertex $21$ of $Q_{3,2}$. This shows that the algebra is two-nilpotent. We now proceed as before by checking the separated quiver that the algebra is finite type.
\end{proof}

\begin{lem}\label{L:p2n4} Let $p=2$. The descent algebra $\Des{4}{\F}$ is wild.
\end{lem}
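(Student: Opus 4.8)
The plan is to compute the Ext quiver $Q_{4,2}$ explicitly and then read off wildness from its separated quiver. Since $p=2$, the $2$-regular partitions of $4$ are $\pReg(4)=\{4,31\}$, so $\Des{4}{\F}$ has exactly the two simple modules $M_{4,\F}$ and $M_{31,\F}$, and $\dim\Des{4}{\F}=2^3=8$. First I would record the Cartan matrix. By \Cref{C:nleq5} the algebra $\Des{4}{K}$ is the path algebra of $Q_{4,\infty}$, which by \Cref{T:OrdExtquiver} consists of the two isolated vertices $1^4,2^2$ together with the path $21^2\to 31\to 4$; its Cartan matrix $C$ is then immediate. The $2$-equivalence classes are $\{1^4,21^2,2^2,4\}$ (all reducing to $4$) and $\{31\}$, giving the decomposition matrix $D$, and \Cref{T:APW} yields
\[
\wt{C}\;=\;D^\top C D\;=\;\begin{pmatrix}1&1\\1&5\end{pmatrix},
\]
with rows and columns indexed by $31,4$. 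In particular $(P_{4,\F}:M_{4,\F})=5$ while $(P_{31,\F}:M_{31,\F})=(P_{31,\F}:M_{4,\F})=1$.

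Next I would pin down the arrows of $Q_{4,2}$. Applying \Cref{C:Ext1Descent} to the arrows $21^2\to 31$ and $31\to 4$ of $Q_{4,\infty}$ (and using $21^2\sim_2 4$, $31\sim_2 31$, $4\sim_2 4$) produces arrows $4\to 31$ and $31\to 4$ in $Q_{4,2}$. Since $(P_{31,\F}:M_{31,\F})=1$, the projective $P_{31,\F}$ is uniserial of length two with no loop at $31$, and the single copy of $M_{31,\F}$ in $P_{4,\F}$ forces exactly one arrow $4\to 31$. The decisive quantity is the number $a$ of loops at the vertex $4$. Here I would use \Cref{T:radlen}, which gives $\rad^3(\Des{4}{\F})=0$ while $\rad^2\neq 0$. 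In $P_{4,\F}$ the top is $M_{4,\F}$, the first radical layer is $a\,M_{4,\F}\oplus M_{31,\F}$, so the second layer $\rad^2 P_{4,\F}$ consists of $5-1-a=4-a$ copies of $M_{4,\F}$. On the other hand $e_{4,\F}\rad^2(\Des{4}{\F})e_{4,\F}$ is spanned by the images of the length-two cyclic paths at $4$, of which there are exactly $1+a^2$ (the composite of $4\to 31$ with $31\to 4$, together with the $a^2$ products of two loops), since longer paths vanish. Comparing dimensions gives $4-a\le 1+a^2$, i.e. $a^2+a-3\ge 0$, which forces $a\ge 2$.

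Finally I would extract wildness. Consider the quotient $\Des{4}{\F}/\rad^2(\Des{4}{\F})$; by \Cref{L: surj alg}(i) it suffices to prove this quotient is wild, and it has radical square zero with the same Ext quiver $Q_{4,2}$. Its separated quiver therefore contains the vertex $4$ joined to a copy $4'$ by a \emph{double} edge (from the two loops) and to a further vertex $31'$ by a single edge (from the arrow $4\to 31$). A connected diagram carrying a double edge on three or more vertices properly contains the Euclidean diagram $\widetilde{\mathbb{A}}_1$ and so is neither Dynkin nor extended Dynkin (its Tits form is indefinite). By \Cref{T:separatedquiver} the quotient is of wild type, and hence so is $\Des{4}{\F}$.

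The main obstacle is the loop count at the vertex $4$: because $Q_{4,\infty}$ is acyclic, these loops are a genuinely modular phenomenon invisible to \Cref{C:Ext1Descent}, and they must instead be forced by combining the Cartan datum $\wt{C}$ with the nilpotency bound $\rad^3=0$ exactly as above. Everything else — the computation of $C$, $D$, $\wt{C}$ and the inspection of the separated quiver — is routine.
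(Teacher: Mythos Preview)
Your argument is correct. The Cartan computation $\wt C=\begin{pmatrix}1&1\\1&5\end{pmatrix}$ is right, the bookkeeping forcing exactly one arrow each way between $31$ and $4$ and no loop at $31$ is sound, and the inequality $4-a\le 1+a^2$ (obtained by comparing $\dim e_4\rad^2 e_4$ with the number of length-two paths $4\to 4$, using $\rad^3=0$) does force $a\ge 2$. The separated-quiver conclusion is then immediate: a connected component containing a double edge and a third vertex is already neither Dynkin nor Euclidean. One small point of exposition: when you say ``$\rad^2 P_{4,\F}$ consists of $4-a$ copies of $M_{4,\F}$'' you are implicitly using that the unique copy of $M_{31,\F}$ in $P_{4,\F}$ sits in $\rad/\rad^2$; this follows from the arrow $4\to 31$ you already established, so the step is fine.

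This is a genuinely different route from the paper. The paper works directly with elements: it writes down $e=e_{4,\F}$ explicitly, finds two elements $u,v\in e\Des{4}{\F}e$, computes the multiplication table of the five-dimensional local algebra $e\Des{4}{\F}e$, recognises it as $\F\langle x,y\rangle/(x^2+y^2,yx)$, and quotes Ringel's classification of local algebras to conclude that $e\Des{4}{\F}e$ is wild, whence $\Des{4}{\F}$ is wild by \Cref{L: surj alg}(ii). Your approach instead extracts the needed structure from the Cartan matrix and the nilpotency index, in the spirit of the paper's treatment of the cases $(p,n)=(3,5)$ and $(5,6)$. The paper's computation is shorter and more direct, but yours has the advantage of determining (most of) the Ext quiver $Q_{4,2}$ along the way and of avoiding any multiplication of $\Xi$-elements. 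Note that the two arguments are looking at the same five-dimensional object: your loop count $a\ge 2$ is exactly the statement that $\dim\bigl(e_4\,\rad/\rad^2\,e_4\bigr)\ge 2$, which is what makes the local algebra $e_4\Des{4}{\F}e_4$ wild in Ringel's list.
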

\begin{proof} Let $e=e_{4,\F}=\Xi^{4} + \Xi^{31} + \Xi^{21^2} + \Xi^{1^4}$ (see \cite[Appendix A]{Lim:2023a}), $u=\Xi^{22}+\Xi^{211}+\Xi^{121}$ and $v=\Xi^{211}$. It is easy to check that the local algebra $A:=e\Des{4}{\F}e$ has a basis $\{e,u,v,u^2,uv\}$. In its radical, we have \[\begin{array}{c|cccc} &u&v&u^2&uv\\ \hline u&u^2&uv&0&0 \\ v&0&u^2&0&0\\ u^2&0&0&0&0\\ uv&0&0&0&0\end{array}\] It is clear that $A$ is isomorphic to the algebra $F\langle x,y\rangle/(x^2+y^2,yx)$ by identifying $x,y$ with $u,v$ respectively. By \cite[(1.2)]{Rin488}, the algebra $A$ is wild. Therefore $\Des{4}{\F}$ is wild by Lemma \ref{L: surj alg}(ii).
\end{proof}

Lemmas \ref{L: p2n3} and \ref{L:p2n4} settle the $p=2$ case for Theorem \ref{T: rep type}. We now deal with the $p=3$ case.

\begin{lem}\label{L:p3n4} Let $p=3$. The descent algebra $\Des{4}{\F}$ is finite type and its Ext quiver is \[\begin{tikzcd}
    4&31\arrow[l]\arrow[out=75,in=105,loop,distance=20pt]&21^2\arrow[l]&2^2.
\end{tikzcd}\]
\end{lem}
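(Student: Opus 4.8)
The plan is to determine the quiver $Q_{4,3}$ from the Cartan and decomposition data together with the ordinary quiver, and then to establish finite type by reducing to an algebra with vanishing radical square. The four vertices of $Q_{4,3}$ are the $3$-regular partitions $4,31,2^2,21^2$. By \Cref{T:OrdExtquiver} the ordinary quiver $Q_{4,\infty}$ is the linear quiver $21^2\to 31\to 4$ with $2^2$ and $1^4$ isolated, and by \Cref{C:nleq5} we have $\Des{4}{K}\cong KQ_{4,\infty}$, so its Cartan matrix $C$ is the path-counting matrix of this hereditary algebra. For $p=3$ the only non-trivial $p$-equivalence among partitions of $4$ is $1^4\sim_3 31$ (the cycle type $31$ has $3'$-part $1^4$, matching that of $1^4$), which determines the decomposition matrix $D$; then $\wt{C}=D^\top C D$ by \Cref{T: APW}. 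From $\wt{C}$ I would read off that $P_{21^2,\F}$ has the three distinct constituents $M_{21^2,\F},M_{31,\F},M_{4,\F}$, each once; that $(P_{31,\F}:M_{31,\F})=2$ and $(P_{31,\F}:M_{4,\F})=1$; and that $P_{4,\F}$ and $P_{2^2,\F}$ are one-dimensional, hence simple projective.

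Next I would locate the arrows. By \Cref{C:Ext1Descent} the two arrows $21^2\to 31$ and $31\to 4$ of $Q_{4,\infty}$ survive in $Q_{4,3}$, while \Cref{L:projmodp}(iii) shows that the reduction of the ordinary projective at $21^2$ is uniserial with layers $M_{21^2,\F}/M_{31,\F}/M_{4,\F}$; being cyclic with top $M_{21^2,\F}$ and of dimension $3=\dim P_{21^2,\F}$, this reduction is $P_{21^2,\F}$ itself. In particular $21^2$ is a source whose only arrow goes to $31$, and $\soc P_{21^2,\F}=M_{4,\F}$. Since $P_{4,\F}$ and $P_{2^2,\F}$ are simple, the vertices $4$ and $2^2$ are sinks, and as $M_{2^2,\F}$ appears in no radical the vertex $2^2$ is isolated. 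The only point left is a possible loop at $31$: the radical of $P_{31,\F}$ has constituents $M_{31,\F}$ and $M_{4,\F}$, and the arrow $31\to 4$ forces $M_{4,\F}\in \rad P_{31,\F}/\rad^2 P_{31,\F}$; the remaining copy of $M_{31,\F}$ cannot sit below it, since that would require an arrow $4\to 31$ whereas $P_{4,\F}$ is simple. Hence $M_{31,\F}\in\rad/\rad^2$, giving exactly one loop at $31$ and $\rad^2 P_{31,\F}=0$. This is precisely the claimed quiver.

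The remaining step, proving finite type, is the one I expect to be the main obstacle: the loop at $31$ together with $\rad^2\Des{4}{\F}\ne 0$ (the nilpotency index is $3$ by \Cref{T:radlen}) prevents a direct use of the separated-quiver criterion \Cref{T:separatedquiver}. The key is that $P_{21^2,\F}$ is projective-injective: its socle is $M_{4,\F}$, and comparing constituents via $\wt{C}$ gives $P_{21^2,\F}\cong I_{4,\F}$, the injective hull of $M_{4,\F}$. By \Cref{L: surj alg}(iii) the algebra $A':=\Des{4}{\F}/\soc(P_{21^2,\F})$ therefore has the same representation type as $\Des{4}{\F}$. Factoring out $\soc(P_{21^2,\F})=M_{4,\F}$ destroys the unique surviving length-two path $21^2\to 31\to 4$, so $\rad^2 A'=0$, while $\rad/\rad^2$ is untouched and the Ext quiver of $A'$ is still $Q_{4,3}$.

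Finally I would apply \Cref{T:separatedquiver}(i) to $A'$. From the three arrows $21^2\to 31$, $31\to 4$ and the loop $31\to 31$, the separated quiver acquires the edges $21^2-31'$, $31-31'$ and $31-4'$, which form the Dynkin diagram $\mathbb{A}_4$ along the path $21^2-31'-31-4'$, the four vertices $4,2^2,2^2{}',21^2{}'$ remaining isolated (four copies of $\mathbb{A}_1$). Thus the separated quiver of $Q_{4,3}$ is a disjoint union of Dynkin diagrams, so $A'$---and hence $\Des{4}{\F}$---is of finite representation type, completing the proof.
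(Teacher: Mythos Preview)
Your proof is correct and follows essentially the same approach as the paper: determine $Q_{4,3}$ from $\wt{C}$, the ordinary quiver, and \Cref{C:Ext1Descent}, identify $P_{21^2,\F}$ as projective-injective via \Cref{L:projmodp}(iii) and the last column of $\wt{C}$, pass to the quotient by its socle using \Cref{L: surj alg}(iii), and apply the separated-quiver criterion to the resulting two-nilpotent algebra. Your argument for the loop at $31$ (that $M_{31,\F}$ cannot lie in $\rad^2 P_{31,\F}$ since $P_{4,\F}$ is simple) is logically equivalent to the paper's observation that the surjection $\rad P_{31,\F}\to M_{4,\F}$ splits because $M_{4,\F}$ is projective.
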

\begin{proof} The Ext quiver $Q=Q_{4,\infty}$ of $\Des{4}{K}$ is
\[\begin{tikzcd}
    4&31\arrow[l]&21^2\arrow[l]&2^2&1^4
\end{tikzcd}\] By Corollary \ref{C:nleq5}, we know that $\Des{4}{K}\cong KQ$. We order the partitions of $4$ with respect to the lexicographic order by $1^4<21^2<2^2<31<4$. Since $p=3$, we have $1^4\sim_3 31$. Let $C$ be the Cartan matrix of $\Des{4}{K}$ and $D$ be the decomposition matrix for $p=3$. As such, by Theorem \ref{T: APW},
\[\wt C=D^\top CD=\begin{pmatrix}1 & 0 & 1 & 1\cr 0 & 1 & 0 & 0\cr 0 & 0 & 2 & 1\cr 0 & 0 & 0 & 1\end{pmatrix}.\] By Corollary \ref{C:Ext1Descent}, we have arrows $4\leftarrow 31$ and $31\leftarrow 21^2$ in $Q_{4,3}$. We claim that
\begin{enumerate}[(a)]
\item $\rad(P_{31,\F})$ is semisimple the direct sum of $M_{4,\F}$ and $M_{31,\F}$, and
\item $P_{21^2,\F}$ is the injective hull of $M_{4,\F}$.
\end{enumerate}

For (a), by the Cartan matrix $\wt C$, notice that $M_{4,\F}$ is projective. Since there is an arrow $4\leftarrow 31$ in $Q_{4,3}$, there is a surjection $\rad(P_{31,\F})\to M_{4,\F}$ and it must split. By the Cartan matrix again, we see that $\rad(P_{31,\F})\cong M_{4,\F}\oplus M_{31,\F}$. For (b), the projective module $P_{21^2,\F}\cong P_{21^2,\cO}\otimes_\cO\F$ is uniserial with composition factors, from top to bottom, $M_{21^2,\F}$, $M_{31,\F}$ and $M_{4,\F}$ by Lemma \ref{L:projmodp}(iii). The injective hull of $M_{4,\F}$ has the same composition factors as $P_{21^2,\F}$ which can be seen from the last column of the Cartan matrix $\wt C$. Hence $P_{21^2,\F}$ is actually isomorphic to the injective hull of $M_{4,\F}$ and thus $P_{21^2,\F}$ is both injective and projective. We have obtained the Ext quiver $Q_{4,3}$ of $\Des{4}{\F}$ is given as in the statement.

We could make use of Lemma \ref{L: surj alg}(iii) to conclude that $B:=\Des{4}{\F}/\soc(P_{21^2,\F})=\Des{4}{\F}/M_{4,\F}$ has the same representation type as $\Des{4}{\F}$. But  it is elementary in our case which we shall give a brief argument. Let $\omega$ span the socle of the projective-injective $\Des{4}{\F}$-module $P:=P_{21^2,\F}$. If $M$ is an indecomposable $\Des{4}{\F}$-module and $\omega m\neq 0$ for some $m\in M$ then the module $\Des{4}{\F}m$ is isomorphic to $P$ and is a submodule of $M$. Since $P$ is injective, we must have $M\cong P$. That is, all indecomposable $\Des{4}{\F}$-modules except $P$ are $B$-modules. This shows that $\Des{4}{\F}$ is finite type if $B$ is finite type.

The algebra $B$ is two-nilpotent and has the same Ext quiver $Q_{4,3}$. The separated quiver of $Q_{4,3}$ is a disjoint union of Dynkin diagrams of type $\A$ and thus $B$ is finite type by Theorem \ref{T:separatedquiver}(i).
\end{proof}

\begin{rem} It can also be shown that, when $p=3$, $\Des{4}{\F}$ is a string algebra (see \cite[\S II]{Erd90}).
\end{rem}

Consider $\Des{5}{\F}$. If $p\geq 7$, the Ext quiver $Q_{5,p}$ of $\Des{5}{\F}$ is
\begin{equation}\label{Dg:Q50}
\begin{tikzcd}
    2^21\arrow[r]&32\arrow[r]&5&41\arrow[l]&31^2\arrow[l]&21^3\arrow[l]&1^5
\end{tikzcd}
\end{equation}
In this case, $\Des{5}{\F}\cong \F Q_{5,p}$ by Corollary \ref{C:nleq5} and we can write down its Cartan matrix $C$ with respect to the lexicographic order
$1^5<21^3 < 2^21 < 31^2<32<41<5$ as follows.
\begin{equation}\label{Eq:C50}
C=\begin{pmatrix} 1 & 0&0&0&0&0&0\cr
0 & 1 & 0 & 1 & 0 & 1 & 1\cr
0&0&1&0&1&0&1\cr
0&0&0 &1 & 0 &1&1 \cr
0&0&0&0& 1 & 0 & 1 \cr
0&0&0&0&0&1 & 1 \cr
0&0&0&0&0&0&1\end{pmatrix}
\end{equation}

\begin{lem}\label{L:p3n5} Let $p=3$. The descent algebra $\Des{5}{\F}$ is wild.
\end{lem}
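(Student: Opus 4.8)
The plan is to pin down enough of the Ext quiver $Q_{5,3}$ of $A:=\Des{5}{\F}$ to make its separated quiver visibly wild. First I would assemble the combinatorial input. The $3$-regular partitions of $5$ are $2^21,31^2,32,41,5$, and the only irregular ones satisfy $1^5\sim_3 31^2$ and $21^3\sim_3 32$; these two coincidences determine the decomposition matrix $D$. Feeding $D$ and the characteristic-zero Cartan matrix $C$ of \Cref{Eq:C50} into \Cref{T: APW} gives the modular Cartan matrix $\wt C=D^{\top}CD$, whose salient features are that $\wt C_{31^2,31^2}=\wt C_{32,32}=2$, that the column of $\wt C$ indexed by $31^2$ (resp.\ $32$) is supported on $\{31^2,32\}$ (resp.\ $\{2^21,32\}$), and that the column indexed by $2^21$ is supported on $\{2^21\}$ alone. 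Next, applying \Cref{C:Ext1Descent} to the acyclic quiver $Q_{5,\infty}$ of \Cref{Dg:Q50} (in which $1^5$ is isolated) and translating each vertex to its $3$-regular representative produces the arrows $2^21\to 32$, $32\to 31^2$, $32\to 5$, $31^2\to 41$ and $41\to 5$ in $Q_{5,3}$.

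The crux is to promote the two diagonal Cartan entries $\wt C_{31^2,31^2}=\wt C_{32,32}=2$ to loops at $31^2$ and $32$. Consider $32$: since $M_{32,\F}$ is the head of $P_{32,\F}$ but occurs there with multiplicity two, a second copy lives in $\rad(P_{32,\F})$, which forces a closed path at $32$ in $Q_{5,3}$ of some minimal length $k\ge 1$. If $k\ge 2$, write it as $32\to\cdots\to v\to 32$; the final arrow forces $M_{32,\F}$ into $\rad(P_{v,\F})$, so $v\in\{2^21,32\}$ by the column support. The case $v=32$ is itself a loop and contradicts minimality, while the case $v=2^21$ makes $2^21$ reachable from $32$, impossible since the column support shows $2^21$ is a source. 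Hence $k=1$, i.e.\ there is a loop at $32$. The same reasoning gives a loop at $31^2$: here the analogous bad case would require $32$ to be reachable from $31^2$, but $32$ receives no arrow except from the source $2^21$ (and its own loop), so this too is impossible. Alternatively, these loops drop out of \Cref{T: full subquiver}: the loop at $31$ in $Q_{4,3}$ of \Cref{L:p3n4} transports along $-^{\#1}$ to a loop at $31^2$, and the loop at $3$ in $Q_{3,3}$ (an easy two-vertex computation) transports along $-^{\#2}$ to a loop at $32$. I expect this loop-detection to be the main obstacle, since without the loops the quiver would be a disjoint union of type-$\A$ pieces and the algebra would be finite type.

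With the subquiver $Q'\subseteq Q_{5,3}$ consisting of the five arrows above together with the two loops, I would conclude via the separated-quiver criterion. Passing to $\bar A:=A/\rad^2(A)$, which satisfies $\rad^2(\bar A)=0$ and has the same Ext quiver $Q_{5,3}$, I compute the separated quiver of $Q'$: its one non-trivial connected component is the tree centred at $32$ with three branches of edge-lengths $2$, $2$ and $3$. This tree strictly contains the extended Dynkin diagram $\wt{\mathbb{E}}_6$ (branches $2,2,2$) and is therefore of indefinite type, being neither a Dynkin nor an extended Dynkin diagram. Because $Q'\subseteq Q_{5,3}$, the separated quiver of $Q_{5,3}$ contains this tree as a subgraph inside a connected component that still properly contains $\wt{\mathbb{E}}_6$; so the full separated quiver is neither a union of Dynkin diagrams nor a union of Dynkin and extended Dynkin diagrams. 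By \Cref{T:separatedquiver}, $\bar A$ is wild, whence $A=\Des{5}{\F}$ is wild by \Cref{L: surj alg}(i). A convenient consequence of this monotonicity is that I never have to rule out extra arrows in $Q_{5,3}$: enlarging the quiver only enlarges the separated quiver and preserves the indefinite component.
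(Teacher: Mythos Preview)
Your proof is correct and follows the same overall architecture as the paper: obtain the non-loop arrows via \Cref{C:Ext1Descent}, produce loops at $32$ and $31^2$, and conclude wildness from the separated quiver of $A/\rad^2(A)$, which contains the same $8$-vertex tree (branches $2,2,3$) exhibited in the paper.

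The substantive difference is in how the loops are established. For the loop at $31^2$ both you and the paper invoke \Cref{T: full subquiver} with \Cref{L:p3n4}. For the loop at $32$, however, the paper uses \Cref{L:projmodp}(iii): the reduction $T=P_{21^3,\cO}\otimes_\cO\F$ is uniserial, giving a short exact sequence $0\to W\to P_{32,\F}\to T\to 0$, and then the composition factors of $P_{31^2,\F}$ and $P_{5,\F}$ force $M_{32,\F}$ into $\rad(P_{32,\F})/\rad^2(P_{32,\F})$. Your argument is more elementary: from $\wt C_{32,32}=2$ there must be a closed path at $32$, and the column supports of $\wt C$ (arrows into $32$ only from $2^21$ or $32$, with $2^21$ a sink in the quiver) rule out any such path of length $\geq 2$. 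This bypasses \Cref{L:projmodp} entirely and uses only that an arrow $v\to w$ forces $\wt C_{vw}>0$; it is a clean shortcut. Your alternative via $-^{\#2}$ from a loop at $3$ in $Q_{3,3}$ also works and is not in the paper.
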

\begin{proof} We claim that the Ext quiver $Q_{5,3}$ of $\Des{5}{\F}$ contains the following subquiver (cf. Remark \ref{R:Q53}).
\begin{equation}\label{Dg:Q53}
\begin{tikzcd}
   5&32\ar[l]\ar[out=75,in=105,loop,distance=20pt]\ar[d]&221\ar[l]\\
   41\ar[u]&{311\ar[l]\ar[out=15,in=-15,loop,distance=20pt]}
\end{tikzcd}
\end{equation}  Once we have proved the claim, since $\Des{5}{\F}/\rad^2(\Des{5}{\F})$ is two-nilpotent and has the same Ext quiver $Q_{5,3}$, we see that the separated quiver of $Q_{5,3}$ contains the following subquiver \[\tiny{\begin{tikzcd}
    \circ\ar[r]&\circ&\circ\ar[l]\ar[r]\ar[d]&\circ&\circ\ar[l]\ar[r]&\circ\\ &&\circ\\&&\circ\ar[u]
\end{tikzcd}}\] which is neither a union of Dynkin quivers nor a union of extended Dynkin quivers. So $\Des{5}{\F}$ is wild by Lemma \ref{L: surj alg}(i) and Theorem \ref{T:separatedquiver}.

Using Corollary \ref{C:Ext1Descent} and $Q_{5,\infty}$, we get all the arrows in the quiver (\ref{Dg:Q53}) except the loops at both vertices $32$ and $311$. The loop at $311$ can be obtained using Theorem \ref{T: full subquiver} and Lemma \ref{L:p3n4}. For the loop at $32$, we compute the Cartan matrix for $p=3$ using (\ref{Eq:C50}) and get
$$\wt{C} = D^\top CD=\left(\begin{matrix} 1 & 0&1 & 0 & 1\cr 0&2&0&1&1\cr 0&1&2&1&2\cr 0&0&0&1&1\cr 0&0&0&0&1\end{matrix}\right).
$$  Let $T=P_{21^3, \cO}\otimes_\cO \F$. By $\wt{C}$ and Lemma \ref{L:projmodp}(iii), $T$ is uniserial and there is a short exact sequence $$0\to W \to P_{32,\F} \to  T\to 0
$$ for some $W$ with composition factors $M_{32,\F}$ and $M_{5,\F}$. Notice that $(\rad(P_{32,\F}):M_{32,\F})=1$. Suppose on the contrary that $M_{32,\F}$ does not appear in \[Z:=\rad(P_{32,\F})/\rad^2(P_{32,\F}).\] As such, $Z\cong M_{31^2,\F}$ or $Z\cong M_{31^2,\F}\oplus M_{5,\F}$. This is a contradiction since neither $P_{31^2,\F}$ nor $P_{5,\F}$ contains $M_{32,\F}$ as a composition factor. Thus there is a loop at $32$.
\end{proof}

Lemmas \ref{L:p3n4} and \ref{L:p3n5} settle the $p=3$ case for Theorem \ref{T: rep type}. We now focus on the $p\geq 5$ case. The case $p=5=n$ is more involved and will be dealt with last. We begin with the $p\geq 7$ case.

\begin{lem} Let $p\geq 7$. The descent algebra $\Des{5}{\F}$ is finite type.
\end{lem}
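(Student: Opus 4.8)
The plan is to reduce directly to Gabriel's classification of representation-finite hereditary algebras. Since $p\geq 7>5=n$, the hypotheses of \Cref{C:nleq5} are met, so $\Des{5}{\F}\cong \F Q_{5,p}$ is isomorphic to the path algebra of its own Ext quiver. Consequently it suffices to examine the underlying graph of $Q_{5,p}$ and verify that it is a disjoint union of Dynkin diagrams of type $\mathbb{A}$, $\mathbb{D}$ or $\mathbb{E}$.

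First I would read off $Q_{5,p}$ from \Cref{Dg:Q50}. Forgetting the orientations of the arrows, the six vertices $2^21,32,5,41,31^2,21^3$ form a single unbranched chain
\[
2^21 - 32 - 5 - 41 - 31^2 - 21^3,
\]
while the vertex $1^5$ is isolated. Hence the underlying graph of $Q_{5,p}$ is the disjoint union $\mathbb{A}_6\sqcup\mathbb{A}_1$. In particular $Q_{5,p}$ is acyclic, so $\F Q_{5,p}$ is finite-dimensional, which is in any case already guaranteed by the dimension equality $\dim_\F\Des{5}{\F}=2^4$ used in \Cref{C:nleq5}.

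The remaining step is to invoke \Cref{T: Gab}. Since the underlying graph of $Q_{5,p}$ is a disjoint union of type $\mathbb{A}$ Dynkin diagrams, Gabriel's theorem yields that $\F Q_{5,p}$, and therefore $\Des{5}{\F}$, has finite representation type.

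There is essentially no genuine obstacle here: all the substantive work has already been carried out, namely the identification of $\Des{5}{\F}$ with the full path algebra in \Cref{C:nleq5} and the determination of the Ext quiver $Q_{5,p}$ in \Cref{Dg:Q50}. The only thing that needs checking is the purely combinatorial observation that the underlying graph of $Q_{5,p}$ has no branch vertices and splits as $\mathbb{A}_6\sqcup\mathbb{A}_1$, which is immediate by inspection of \Cref{Dg:Q50}.
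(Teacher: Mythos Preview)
Your proposal is correct and follows essentially the same argument as the paper: use \Cref{C:nleq5} to identify $\Des{5}{\F}$ with the path algebra $\F Q_{5,p}$, observe from \Cref{Dg:Q50} that the underlying graph is a disjoint union of type $\mathbb{A}$ Dynkin diagrams, and conclude finite type via \Cref{T: Gab}. Your version simply spells out the decomposition $\mathbb{A}_6\sqcup\mathbb{A}_1$ explicitly, which the paper leaves implicit.
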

\begin{proof}  By Corollary \ref{C:nleq5}, $\Des{5}{\F}\cong \F Q_{5,p}$. Since the underlying graph of $Q_{5,p}$ (see the quiver (\ref{Dg:Q50})) is a disjoint union of Dynkin diagrams of type $\A$, by Theorem \ref{T: Gab}, $\Des{5}{\F}$ is hereditary and finite.
\end{proof}

\begin{lem} Let $p\geq 5$. The descent algebra $\Des{6}{\F}$ is wild.
\end{lem}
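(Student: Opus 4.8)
The plan is to apply the corner (idempotent) criterion \Cref{L: surj alg}(ii): I will exhibit an idempotent $e$ in $A:=\Des{6}{\F}$ for which the corner algebra $eAe$ is a wild hereditary algebra, so that $A$ itself is wild. The natural choice is $e=e_{321,\F}+e_{3^2,\F}+e_{6,\F}$, built from the three vertices of $Q_{6,p}$ at the ``diamond'' $321\to 51\to 6$, $321\to 42\to 6$ together with the pendant $321\to 3^2$. Since all three of $321$, $3^2$, $6$ are $p$-regular for $p\geq 5$, they index genuine pairwise non-isomorphic simple modules, and $B:=eAe$ is a basic, finite-dimensional subalgebra with exactly three simple modules.

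The key computation is to read off the dimensions of the relevant corners directly from the Cartan matrices in \Cref{Eg: D6 Cartanp}, using that $\dim_\F e_{\gamma,\F}Ae_{\delta,\F}=(P_{\delta,\F}:M_{\gamma,\F})$ is a Cartan number. The matrix $C$ handles $p\geq 7$ and $\wt C$ handles $p=5$, so the whole hypothesis $p\geq 5$ is covered at once. In either matrix one reads $\dim_\F e_{6,\F}Ae_{321,\F}=2$, $\dim_\F e_{3^2,\F}Ae_{321,\F}=1$ and, crucially, $\dim_\F e_{6,\F}Ae_{3^2,\F}=0$; together with the diagonal corners $e_{\lambda,\F}Ae_{\lambda,\F}=\F$ this gives $\dim_\F B=6$. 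Because $e_{6,\F}Ae_{3^2,\F}=0$ there is no path $3^2\to 6$ inside $B$, whence $e_{6,\F}\rad^2(B)e_{321,\F}=0$; so the two independent elements of $e_{6,\F}Ae_{321,\F}$ survive as \emph{two} arrows $321\to 6$ in the Ext quiver $Q_B$ of $B$, while $e_{3^2,\F}Ae_{321,\F}$ contributes a single arrow $321\to 3^2$ and no further arrows occur. Thus $Q_B$ is the quiver with a double arrow $321\rightrightarrows 6$ and one arrow $321\to 3^2$; comparing $\dim_\F \F Q_B=6=\dim_\F B$ against \Cref{T: path alg rel} forces $B\cong \F Q_B$, so $B$ is hereditary.

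Finally, the underlying graph of $Q_B$ — a double edge with a pendant edge — is neither a disjoint union of Dynkin diagrams nor an extended Dynkin diagram (equivalently, its Tits form is indefinite), so $B\cong\F Q_B$ is of neither finite nor tame type by \Cref{T: Gab} and \Cref{T:tame}, hence wild; by \Cref{L: surj alg}(ii), $A=\Des{6}{\F}$ is wild. The main obstacle, and the reason the Cartan matrices are indispensable, is the linear independence of the two length-two paths $321\to 51\to 6$ and $321\to 42\to 6$: a priori a relation (for $p=5$ not written down explicitly, and whose $p\geq 7$ form from \Cref{C:n6} lives on paths out of $2^21^2$) could collapse them, leaving only a tame Kronecker corner. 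The equalities $\dim_\F e_{6,\F}Ae_{321,\F}=2$ and $\dim_\F e_{6,\F}Ae_{3^2,\F}=0$ rule this out uniformly, which is precisely why attaching the sink-like vertex $3^2$ — rather than $51$ or $42$, through which $6$ is reachable — upgrades the tame $2$-Kronecker corner to a wild one.
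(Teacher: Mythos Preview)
Your proof is correct and takes a genuinely different route from the paper's. The paper treats $p\geq 7$ and $p=5$ separately: for $p\geq 7$ it passes to the quotient $\Des{6}{\F}/(e_{6,\F})$, uses \Cref{C:n6} to see that this quotient is the path algebra of $Q_{6,p}$ with the vertex $6$ deleted, and then observes that this hereditary algebra contains a wild subquiver centred at $321$; for $p=5$ it first establishes a loop at $51$ in $Q_{6,5}$ (from the Cartan matrix $\wt C$ and the projectivity of $M_{6,\F}$) and then argues via the separated quiver of $\Des{6}{\F}/\rad^2(\Des{6}{\F})$. Your corner-algebra argument is more uniform: the single idempotent $e=e_{321,\F}+e_{3^2,\F}+e_{6,\F}$ handles all $p\geq 5$ at once because the relevant Cartan entries in $C$ and $\wt C$ coincide. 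The paper's approach has the side benefit of producing more of the Ext quiver $Q_{6,5}$ explicitly (feeding into the conjecture at the end), whereas yours isolates exactly what is needed for wildness --- the two-dimensionality of $e_{6,\F}Ae_{321,\F}$ together with the vanishing $e_{6,\F}Ae_{3^2,\F}=0$, which is precisely what guarantees that the two length-two paths $321\to 51\to 6$ and $321\to 42\to 6$ survive as independent arrows in the corner and upgrade the tame $\tilde{\mathbb{A}}_1$ corner to a wild one.
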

\begin{proof} Suppose first that $p\geq 7$. By the quiver (\ref{Dg:Q60}), $Q_{6,p}$ contains the subquiver (see \cite[Figure 3]{Schocker:2004a}) \[
\begin{tikzcd}&&2211\ar[d]\\ 411\ar[r]&51&321\ar[l]\ar[d]\ar[r]&33\\ &&42
\end{tikzcd}\]
This is a wild quiver, however we can only deduce that
$\Des{6}{\F}$ is wild if we know that the paths in this quiver are not involved in any relation. For the $p=\infty$ case, this was stated in \cite{Schocker:2004a}. Let $B=\Des{6}{\F}/(e_{6,\F})$, i.e., we factor out the ideal generated by the idempotent $e_{6,\F}$. Its Ext quiver $Q$ is obtained from $Q_{6,p}$ by removing the vertex $6$ and the two arrows ending at $6$. By Corollary \ref{C:n6}, $B\cong FQ_{6,p}/(e_{6,\F},\omega)$ where $\omega=e_{6,\F}\omega e_{2^21^2,\F}$ and thus it is isomorphic to the path algebra $FQ$. By Theorems \ref{T: Gab} and \ref{T:tame}, $B$ and hence $\Des{6}{\F}$ are wild. This completes the proof for $p\geq 7$.

We assume now $p=5$ and claim that $Q_{6,5}$ contains the following subquiver (cf. Remark \ref{R:Q53}).
\begin{equation}\label{Dg:Q65}
\begin{tikzcd}
    6&51\ar[l]\ar[out=75,in=105,loop,distance=20pt]&41^2\ar[l]&31^3\ar[l]&21^4\ar[l]\\ 42\ar[u]&321\ar[u]\ar[l]\ar[r]&3^2&2^3\\ &2^21^2\ar[u]
\end{tikzcd}
\end{equation} By Corollary \ref{C:Ext1Descent} and the quiver (\ref{Dg:Q60}), we obtain all the arrows in quiver (\ref{Dg:Q65}) except the loop at $51$. Let $\wt{C}$ be the Cartan matrix of $\Des{6}{\F}$ as in (\ref{Eq:C60}). Since $M_{6,\F}$ is projective, argue as in the proof of Lemma \ref{L:p3n4} (for claim (a)), we conclude that there is a loop at $51$. As such, the separated quiver of the Ext quiver of $B=\Des{6}{\F}/\rad^2(\Des{6}{\F})$ contains the subquiver:
\[\tiny{\begin{tikzcd}
    \circ\ar[r]&\circ&\circ\ar[l]\ar[r]&\circ&\circ\ar[l]\\
    &&\circ&\circ\ar[u]\ar[l]\ar[r]&\circ
\end{tikzcd}}\] We conclude that $\Des{6}{\F}$ is wild by Lemma \ref{L: surj alg}(i) and Theorem \ref{T:separatedquiver}.
\end{proof}

We are left with the case when $p=5$ and $n=5$ and will first determine a presentation of the algebra $\Des{5}{\F}$.


\begin{lem}\label{L: p5n5} Let $p=5$ and $Q$ be the quiver
\begin{equation}\label{Dg:Q55}\begin{tikzcd}
   1\ar{r}{\alpha}&2\ar{r}{\beta}&3\ar{r}{\gamma}&4\ar[out=105,in=75,loop,distance=20pt,"\varepsilon"]&5\ar{l}[swap]{\delta}&6\ar{l}[swap]{\eta}
  \end{tikzcd}
\end{equation} Then $\Des{5}{\F}$ is isomorphic to $\F Q/I$ where $I$ is generated by $\varepsilon^2$, $\varepsilon\gamma$, $\varepsilon\delta$.
  \end{lem}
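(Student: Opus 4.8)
The plan is to pin down the Ext quiver $Q_{5,5}$ by describing all six indecomposable projectives, identify it with the quiver $Q$ of \eqref{Dg:Q55}, and then fix the relations by a dimension count against \Cref{T: path alg rel}. First I would record the combinatorics: the $5$-regular partitions of $5$ are $21^3,2^21,31^2,32,41,5$, and the only nontrivial $5$-equivalence is $1^5\sim_5 5$, so $\Des{5}{\F}$ has six simple modules. Matching \eqref{Dg:Q55}, the vertices $1,\dots,6$ correspond to $21^3,31^2,41,5,32,2^21$, so that $\varepsilon$ is the loop at $5$, the path $\alpha,\beta,\gamma$ is the chain $21^3\to31^2\to41\to5$ and $\eta,\delta$ is the chain $2^21\to32\to5$, both coming from $Q_{5,\infty}$ in \eqref{Dg:Q50}. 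Via \Cref{T: APW} I would compute $\wt C=D^\top CD$ from \eqref{Eq:C50}, collapsing $1^5$ onto $5$; the result is upper triangular with all diagonal entries $1$ except $\wt C_{5,5}=2$.

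Next I would determine the arrows. The five non-loop arrows of $Q$ are forced by \Cref{C:Ext1Descent} from the arrows of $Q_{5,\infty}$. Since the $5$-row of $\wt C$ is $(0,\dots,0,2)$, the projective $P_{5,\F}$ has composition length $2$ with both factors $M_{5,\F}$, so $\rad P_{5,\F}=M_{5,\F}$ and $\rad^2 P_{5,\F}=0$; this yields exactly one loop $\varepsilon$ at $5$ and no other arrow out of $5$. To rule out further arrows I would describe every projective. By \Cref{L:projmodp}(iii), $P_{21^3,\F}\cong P_{21^3,\cO}\otimes_\cO\F$ is uniserial with layers $M_{21^3},M_{31^2},M_{41},M_5$; comparing composition factors with $\wt C$ then gives $P_{31^2,\F}\cong\rad P_{21^3,\F}$ and $P_{41,\F}\cong\rad^2 P_{21^3,\F}$, both uniserial, so the left branch contributes no extra arrows, and $P_{41,\F},P_{32,\F},P_{5,\F}$ all have radical length $2$.

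The one projective not covered by this cascade is $P_{2^21,\F}$, and controlling it is the main obstacle. A priori its radical could be the semisimple module $M_{32}\oplus M_5$, which would introduce a spurious arrow $2^21\to 5$ and spoil the quiver; what must be excluded is exactly this, i.e.\ one must show $\rad^2 P_{2^21,\F}\neq 0$ (equivalently, the path $\delta\eta$ is nonzero). I would establish this by reduction modulo $p$ from characteristic zero: in the hereditary algebra $\Des{5}{K}\cong KQ_{5,\infty}$ (\Cref{C:nleq5}) the projective $P_{2^21,K}$ is uniserial of radical length $3$ with $\rad P_{2^21,K}\cong P_{32,K}$, and $P_{2^21,\F}\cong P_{2^21,\cO}\otimes_\cO\F$ (a dimension comparison via $\wt C$ shows the reduction of the characteristic-zero projective is already the modular projective cover). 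Choosing a saturated $\cO$-lattice so that the bottom layer $\rad^2 P_{2^21,K}=M_{5,K}$ survives reduction into $\rad^2 P_{2^21,\F}$ forces $P_{2^21,\F}$ to be uniserial $M_{2^21}/M_{32}/M_5$. This lattice step is the delicate point; it can equally be settled by directly computing the relevant product in $\Des{5}{\F}$ with the idempotents of \cite{Lim:2023a}. With all six projectives described, $Q_{5,5}=Q$.

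Finally I would read off the relations and count dimensions. The generators $\varepsilon^2,\varepsilon\gamma,\varepsilon\delta$ are precisely the identities $\rad^2 P_{5,\F}=0$, $\rad^2 P_{41,\F}=0$ and $\rad^2 P_{32,\F}=0$ proved above, so $I\subseteq I'$ where $\Des{5}{\F}\cong\F Q/I'$ by \Cref{T: path alg rel}. A direct enumeration of the paths in $Q$ surviving modulo $(\varepsilon^2,\varepsilon\gamma,\varepsilon\delta)$ (six trivial paths, the six arrows, the three length-two paths $\beta\alpha,\gamma\beta,\delta\eta$, and the single length-three path $\gamma\beta\alpha$) gives $\dim_\F \F Q/I=16=2^{4}=\dim_\F\Des{5}{\F}$, whence $I=I'$ and $\Des{5}{\F}\cong\F Q/I$. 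The only genuinely hard step is the structure of $P_{2^21,\F}$; everything else is bookkeeping with $\wt C$ and the single uniserial projective $P_{21^3,\F}$.
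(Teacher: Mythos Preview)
Your proposal is correct and follows essentially the same route as the paper's proof: compute $\wt C$ from \Cref{T: APW}, use the uniserial structure of the $P_{\lambda,K}$ to pin down each $P_{\lambda,\F}$, and finish with a dimension count against $\dim_\F\Des{5}{\F}=16$. You are in fact more careful than the paper, which simply asserts that for $\lambda\neq(5)$ the submodule structure of $P_{\lambda,\F}$ agrees with characteristic zero without singling out the case $\lambda=(2^2,1)$; your flagging of that lattice step as the delicate point (with the direct-computation fallback via the idempotents of \cite{Lim:2023a}) is well placed.
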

\begin{proof} The Ext quiver and Cartan matrix $C$ of $\Des{5}{K}$ are given in the quiver (\ref{Dg:Q50}) and (\ref{Eq:C50}) respectively. Using Theorem \ref{T: APW}, the Cartan matrix of $\Des{5}{\F}$ is obtained from $C$ by replacing the entry $C_{(5),(5)}$ by $2$ and then removing the first row and column. By Corollary \ref{C:nleq5}, we see that any indecomposable projective $\Des{5}{K}$-module $P_{\lambda,K}$ is uniserial. For $\lambda\neq (5)$, the submodule structure of $P_{\lambda,\F}$ is the same as for infinite characteristic. On the other hand, $P_{5,\F}$ has two composition factors $M_{5,\F}$. These are enough to check that the presentation is as stated (with the vertices $1,2,3,4,5,6$ identified with the simple modules labelled by $21^3,31^2,41,5,32,2^21$ respectively).
\end{proof}

\begin{rem}\label{R:Q53} Using the construction of the idempotents in \cite{Lim:2023a} and Magma \cite{Magma}, one may check that the quivers (\ref{Dg:Q53}), (\ref{Dg:Q65}) and (\ref{Dg:Q55}) are precisely $Q_{5,3}$, $Q_{6,5}$ and $Q_{5,5}$ respectively.
\end{rem}

Let $R$ be the following quiver of type $\mathbb{E}_7$ for the rest of this section:
\begin{equation}\label{Dg:E7}
\begin{tikzcd}
   &&&7\ar{d}{\rho}\\1\ar{r}{\alpha}&2\ar{r}{\beta}&3\ar{r}{\gamma}&4&5\ar{l}[swap]{\delta}&6\ar{l}[swap]{\eta}
\end{tikzcd}
\end{equation} For each $i\in[1,7]$, let $\f_i$ be the idempotent of the path algebra $FR$ labelled by the vertex $i$. Notice that $\dim_\F \F R=17$.

\begin{lem}\label{L: p5n5v2} Let $B=FR$ be the path algebra and $A$ be the subalgebra  of $B$ generated by the idempotents $\f_i$ where $i\neq 4,7$, $\hat{\f}_4:=\f_4+\f_7$ and all arrows of lengths at least 1. The algebra $A$ has codimension 1 in $B$ and $\rad(A)=\rad(B)$. Moreover, when $p=5$, we have $\Des{5}{\F}\cong A$.
\end{lem}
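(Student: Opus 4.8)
The plan is to verify the three assertions separately, obtaining the isomorphism $\Des{5}{\F}\cong A$ by matching $A$ with the presentation of $\Des{5}{\F}$ from \Cref{L: p5n5}.

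First I would determine $\rad(A)$ and $\dim_\F A$. Since $A$ contains every arrow of $R$ and is closed under multiplication, it contains every path of positive length, so $\rad(B)\subseteq A$; as a nilpotent ideal of $B$ lying inside $A$, it is a nilpotent ideal of $A$. The six elements $\f_1,\f_2,\f_3,\hat{\f}_4,\f_5,\f_6$ are orthogonal idempotents of $A$ summing to $1$, and they stay orthogonal idempotents modulo $\rad(B)$, so $A/\rad(B)\cong\F^6$ is semisimple; hence $\rad(A)=\rad(B)$. Counting dimensions, $\dim_\F A=\dim_\F(A/\rad(B))+\dim_\F\rad(B)=6+(17-7)=16=\dim_\F B-1$, so $A$ has codimension $1$ in $B$.

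The key point for the isomorphism is that, relative to the idempotents of $A$, the arrow $\rho$ becomes a loop at the merged vertex $\hat{\f}_4$. Indeed $\rho=\f_4\rho\f_7$ in $B$, whence $\hat{\f}_4\rho\hat{\f}_4=\rho$, while $\alpha,\beta,\gamma,\delta,\eta$ retain their sources and targets among $1,2,3,\hat{\f}_4,5,6$ (the heads of $\gamma$ and $\delta$, originally at $4$, now sitting at $\hat{\f}_4$); this reproduces exactly the quiver $Q$ of \Cref{Dg:Q55}. Moreover $\rho^2=\rho\gamma=\rho\delta=0$ already hold in $B$, since none of the products $\rho\rho,\rho\gamma,\rho\delta$ is a composable path ($\rho$ starts at $7$, but each would require it to start at $4$). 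These are precisely the relations generating the ideal $I$ of \Cref{L: p5n5}, so there is a well-defined algebra homomorphism $\bar\phi:\F Q/I\to A$ sending the vertex idempotents $e_1,\dots,e_6$ to $\f_1,\f_2,\f_3,\hat{\f}_4,\f_5,\f_6$, the arrows $\alpha,\beta,\gamma,\delta,\eta$ to the arrows of the same name, and the loop $\varepsilon$ to $\rho$.

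Finally I would finish with a dimension count. Because its image contains all the generators of $A$, the map $\bar\phi$ is surjective; and by \Cref{L: p5n5}, $\dim_\F \F Q/I=\dim_\F\Des{5}{\F}=2^{4}=16=\dim_\F A$. A surjection between spaces of equal finite dimension is bijective, so $\bar\phi$ is an isomorphism and $\Des{5}{\F}\cong\F Q/I\cong A$. The dimension count also does the work of confirming that $I$ captures \emph{all} relations holding in $A$, so no further relations need be checked by hand. I expect the only genuinely delicate step to be the semisimplicity of $A/\rad(B)$ (equivalently $\rad(A)=\rad(B)$); the rest reduces to bookkeeping with the incidences of $R$ together with the automatic vanishing of non-composable paths.
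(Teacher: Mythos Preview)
Your proposal is correct and follows essentially the same approach as the paper: establish $\rad(A)=\rad(B)$ and the codimension, identify $\rho$ with the loop $\varepsilon$ at the merged vertex $\hat{\f}_4$, verify that the three relations $\varepsilon^2=\varepsilon\gamma=\varepsilon\delta=0$ already hold in $B$ because the corresponding paths are non-composable, and then conclude by a surjection $\F Q/I\twoheadrightarrow A$ combined with a dimension count. Your justification of $\rad(A)=\rad(B)$ via the semisimplicity of $A/\rad(B)\cong\F^6$ is in fact a bit more explicit than the paper's one-line remark.
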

\begin{proof} It is easy to check that $\dim_\F A=16=\dim_\F \Des{5}{\F}$. The radical of $A$ contains $\alpha,\beta,\gamma,\delta,\eta,\rho$ and hence all paths of $B$ of positive length. Thus $\rad(A)=\rad(B)$. Let $\varepsilon:=\hat{\f}_4\rho\hat{\f}_4=\rho$ and notice that $\gamma=\hat{\f}_4\gamma \f_3$ and $\delta=\hat{\f}_4\delta \f_5$. When $p=5$, we identify $\Des{5}{\F}$ with $\F Q/I$ using Lemma \ref{L: p5n5} where $Q$ is the quiver (\ref{Dg:Q55}). The quiver of $A$ can be identified with $Q$ in the obvious way. Next, one checks that $\varepsilon^2=0$, $\varepsilon\gamma=0$ and $\varepsilon\delta=0$ which are the relations defining $\Des{5}{\F}$. As such, we have an algebra homomorphism from $\Des{5}{\F}$ onto $A$ which is an isomorphism by comparing their dimensions.
\end{proof}

\begin{lem}\label{L:p5n5v3} Let $p=5$. The descent algebra $\Des{5}{\F}$ is finite type.
\end{lem}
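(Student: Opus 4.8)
The plan is to combine the radical embedding $A\subseteq B:=\F R$ constructed in \Cref{L: p5n5v2} with the finite-type transfer result \Cref{L:radicalemb}. Since $\Des{5}{\F}\cong A$ by \Cref{L: p5n5v2}, it suffices to verify the two hypotheses of \Cref{L:radicalemb}: that $B$ has finite type, and that $B\cong A\oplus(B/A)$ as left $A$-modules. The equality $\rad(A)=\rad(B)$ needed to apply the lemma is already recorded in \Cref{L: p5n5v2}.

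First I would check that $B=\F R$ has finite type. The underlying graph of the quiver $R$ in \eqref{Dg:E7} has vertex $4$ as a node of degree three, carrying the three arms $4\,3\,2\,1$, $4\,5\,6$ and $4\,7$ of lengths $3$, $2$ and $1$; this is precisely the Dynkin diagram of type $\mathbb{E}_7$. Hence \Cref{T: Gab} applies and $B$ is hereditary of finite representation type.

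Next I would establish the decomposition $B\cong A\oplus(B/A)$. Since $\dim_\F B=17$ and $\dim_\F A=16$, the quotient $B/A$ is one-dimensional, spanned by the image of $\f_4$ (equivalently of $-\f_7$, as $\f_4+\f_7=\hat{\f}_4\in A$). The key observation is that vertex $4$ is a sink of $R$, so the only path with source $4$ is the trivial path $\f_4$ itself; consequently $a\f_4\in\F\f_4$ for every $a\in A$, since $\hat{\f}_4\f_4=\f_4$ while every positive-length path and every other idempotent annihilates $\f_4$ on the right. Thus $\F\f_4$ is a left $A$-submodule of $B$, isomorphic to the simple module at the vertex $\hat{\f}_4$, and it meets $A$ trivially because the only basis element of $A$ involving $\f_4$ is $\hat{\f}_4=\f_4+\f_7$. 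A dimension count then gives $B=A\oplus\F\f_4$ as left $A$-modules, with $\F\f_4\cong B/A$.

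With both hypotheses in hand, \Cref{L:radicalemb} yields that $A$, and hence $\Des{5}{\F}$, has finite type. I expect the only genuine content to lie in the module-theoretic splitting of the third step; but once one notices that $4$ is a sink of $R$ this becomes immediate, so the argument reduces to a short bookkeeping verification rather than a real obstacle.
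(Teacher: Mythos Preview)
Your proposal is correct and follows essentially the same route as the paper: identify $\Des{5}{\F}$ with the subalgebra $A\subseteq B=\F R$ from \Cref{L: p5n5v2}, observe that $B$ has finite type by \Cref{T: Gab} since $R$ is of Dynkin type~$\mathbb{E}_7$, split off the one-dimensional quotient $B/A$ using that vertex~$4$ is a sink (so $\F\f_4$ is an $A$-submodule complementing $A$), and conclude via \Cref{L:radicalemb}. The paper phrases the splitting via an explicit section $\kappa:B/A\to B$, $\f_4+A\mapsto\f_4$, whereas you give the internal direct-sum decomposition $B=A\oplus\F\f_4$; these are equivalent formulations of the same fact.
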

\begin{proof} We identify $\Des{5}{\F}$ with the subalgebra $A$ of $B$ as in Lemma \ref{L: p5n5v2}. We claim that the pair $A,B$ satisfies Lemma \ref{L:radicalemb}. Clearly, $\rad(A)=\rad(B)$. Also, we have an exact sequence
\begin{equation}\label{Eq:SESD55}
0\to A\to B\stackrel{\pi}\to B/A\to 0
\end{equation}
of left $A$ modules where $\pi$ is the canonical surjection. We claim that the sequence \ref{Eq:SESD55} splits as $A$-modules and hence $B\cong A\oplus S_4$ where $S_4$ is the simple module labelled by the vertex $4$ as in the quiver $R$ (see quiver (\ref{Dg:E7})).

The quotient $B/A$ has basis $\{\f_4+A\}$ and is simple isomorphic to $S_4$ as $A$-modules. Notice that $\f_4 = \hat{\f}_4\f_4$.  Define a linear map $\kappa: B/A\to B$ by
$$\kappa(\f_4+A):= \f_4 \in B$$
The $A$-submodule of $B$ generated by $\f_4$ is spanned by $\hat{\f_4}\f_4$. Notice $\rho \f_4=0$ and any other arrow is also killed by $\f_4$. So $\kappa$  is an $A$-module homomorphism. Since $\pi \kappa = {\rm Id}_{B/A}$, the claim follows.

By Theorem \ref{T: Gab}, $B$ is finite type. So, by Lemma \ref{L:radicalemb}, $A$ is finite type.
\end{proof}

\begin{rem} Consider the setting as in the proof of Lemma \ref{L:p5n5v3}. The path algebra $B$ of type $\mathbb{E}_7$ has $63$ non-isomorphic indecomposable $B$-modules which are in one-to-one correspondence with the positive roots of $\mathbb{E}_7$ by Theorem \ref{T: Gab}. Explicit calculation using the Auslander-Reiten theory shows that there are a total of $62$ non-isomorphic indecomposable $A$-modules. The only two indecomposable $B$-modules which are isomorphic upon restriction to $A$ are the simple $B$-modules labelled by the vertices $4,7$ as in the quiver (\ref{Dg:E7}).
\end{rem}

Putting everything together, we get the proof for our main theorem.

\begin{proof}[Proof of Theorem \ref{T: rep type}] Use Corollary \ref{C: BGR hom} and Lemmas \ref{L: p2n3}--\ref{L:p5n5v3}.
\end{proof}

Based on calculation, we end our paper with the following conjecture on the Ext quiver of $\Des{n}{\F}$. For this purpose, we introduce some notation.

For $\lambda\in\P(n)$, we write $(\lambda,p)=1$ if $p\nmid \lambda_i$ for all $i\in[1,\ell(\lambda)]$, otherwise we write $(\lambda,p)\neq 1$. Let $Q=(Q_0,Q_1)$ be a quiver. For any two vertices $v,w\in Q_0$, we write $\am^Q_{v,w}$ for the number of arrows (respectively, loops if $v=w$) from $v$ to $w$, that is, \[\am^Q_{v,w}=|\{\gamma\in Q_1:s(\gamma)=v,\ t(\gamma)=w\}|.\]

\begin{conj}\label{Conj: Ext Quiv} Let $\lambda,\mu\in\pReg(n)$. In the Ext quiver $Q_{n,p}$ of the descent algebra $\Des{n}{\F}$,
\begin{enumerate}[(i)]
  \item if $\lambda=\mu$, then $\am^{Q_{n,p}}_{\lambda,\lambda}>0$ if and only if $(\lambda,p)\neq 1$.
  \item if $\lambda\neq\mu$, there is an arrow $\lambda\to\mu$ in $Q_{n,p}$ if and only if there exist $\delta\sim_p\lambda$ and $\gamma\sim_p\mu$ such that there is an arrow $\delta\to \gamma$ in the Ext quiver $Q_{n,\infty}$. In this case, $\am^{Q_{n,p}}_{\lambda,\mu}=1$.
\end{enumerate}
\end{conj}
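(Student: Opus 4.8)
The plan is to split the statement into the directions that descend from characteristic zero and those that are genuinely modular, and to attack the latter by base cases together with the embedding of \Cref{T: full subquiver}. First I would set up a convenient model for $\sim_p$: writing $m_b(\nu)$ for the multiplicity of the part $b$ in $\nu$, attach to each $p'$-number $a$ the content $N_a(\nu)=\sum_{k\geq 0}m_{ap^k}(\nu)\,p^k$. Merging $p$ equal parts $ap^k$ into one part $ap^{k+1}$ leaves every $N_a$ fixed, so $\delta\sim_p\gamma$ if and only if $N_a(\delta)=N_a(\gamma)$ for all $a$, and the $p$-regular representative of a class is recovered from the base-$p$ digits of the $N_a$. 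The first payoff is that $(\lambda,p)=1$ forces $N_a(\lambda)<p$ for every $a$, so the class of $\lambda$ is the singleton $\{\lambda\}$. Since $Q_{n,\infty}$ is acyclic (each arrow strictly decreases the number of parts), the characteristic-zero Cartan matrix $C$ is upper unitriangular, and $\wt C=D^\top CD$ then gives $\wt C_{\lambda,\lambda}=C_{\lambda,\lambda}=1$; hence $M_{\lambda,\F}$ occurs in $P_{\lambda,\F}$ only as its head, so $\Ext^1(M_{\lambda,\F},M_{\lambda,\F})=0$ and there is no loop. This proves the ``only if'' half of (i), and the ``if'' half of (ii) is exactly \Cref{C:Ext1Descent}.

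For the loops (the ``if'' half of (i)) the content computation also shows that a single characteristic-zero arrow $\delta\to\gamma$ never satisfies $\delta\sim_p\gamma$, so loops are invisible to \Cref{C:Ext1Descent} and must be built by hand. I would phrase a loop at $\lambda$ as a non-split self-extension $0\to M_{\lambda,\F}\to V\to M_{\lambda,\F}\to 0$, i.e.\ a two-dimensional module on which each $\Xi^q$ acts by an upper-triangular matrix with equal diagonal entries $\varphi^{q,\F}(\lambda)$ and off-diagonal entry $\partial(\Xi^q)$; producing such a $\partial$ amounts to exhibiting a nonzero $\F$-valued function on the $\Xi^q$ compatible with the Garsia--Reutenauer product of \Cref{T: GR 1.1}, and I would try to obtain it from the splitting of a part divisible by $p$, which is $\sim_p$-invisible over $\cO$ but leaves a nonzero defect modulo $p$. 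A complementary and more robust route is base-case-plus-propagation: establish a self-extension at a small vertex with a part divisible by $p$ (for instance at $(p)$ in $\Des{p}{\F}$, where the class is $\{(1^p),(p)\}$, so $\wt C_{(p),(p)}=2$, and one checks that the second copy of $M_{(p),\F}$ lies in the first radical layer), and then realise an arbitrary $p$-regular $\lambda$ with $(\lambda,p)\neq 1$ as $\nu^{\#s}$ for a smaller loop-vertex $\nu$, transporting the loop along $Q_{n-s,p}\hookrightarrow Q_{n,p}$ of \Cref{T: full subquiver}. For the ``only if'' half of (ii) and the multiplicity-one assertion one must instead bound $\Ext^1$ from above: compute $\rad(P_{\lambda,\F})/\rad^2(P_{\lambda,\F})$ exactly by lifting $P_{\lambda,\F}$ to an integral projective cover of $P_{\delta,\cO}\otimes_\cO\F$ (for $\delta\sim_p\lambda$) and using the fully determined $\wt C=D^\top CD$ together with \Cref{L:CPS}.

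The main obstacle is that \Cref{L:CPS} only runs one way: $\cO$-non-vanishing of $\Ext^1$ forces modular non-vanishing, but there is no general reverse bound, and the modular $\Ext^1$ can genuinely exceed its characteristic-zero counterpart (this is precisely how loops, invisible over $K$, arise). The hard content is therefore twofold: constructing the extra self-extensions, and proving an \emph{upper} bound showing that between distinct $\lambda\neq\mu$ the several characteristic-zero extensions collapse to a one-dimensional modular $\Ext^1$. Both reduce to converting the purely numerical Cartan data (total composition multiplicities, which $\wt C=D^\top CD$ delivers in closed form from the Blessenohl--Laue matrix $C$) into Loewy-layer data, i.e.\ deciding which composition factors sit in $\rad/\rad^2$. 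I expect this Loewy analysis, uniform in $n$ and $p$, to be the crux: the clean uniserial picture of \Cref{L:projmodp} is available only for very special shapes, and once $n\geq 6$ the integral projectives $P_{\mu,K}$ are no longer uniserial, so one must control the defining ideal $I$ in $\Des{n}{\F}\cong\F Q_{n,p}/I$ layer by layer rather than read it off from a single module.
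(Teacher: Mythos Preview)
This statement is explicitly a \emph{conjecture} in the paper, not a theorem: the authors present it as such, supported only by small-case computation, and the sole piece they record as established is the ``if'' half of (ii), namely \Cref{C:Ext1Descent}. There is no proof in the paper to compare your attempt against; the remaining implications are open. That said, your argument for the ``only if'' half of (i) is correct and goes beyond what the paper states: when $(\lambda,p)=1$ and $\lambda$ is $p$-regular, your content analysis shows the $\sim_p$-class of $\lambda$ is the singleton $\{\lambda\}$, and since $Q_{n,\infty}$ is acyclic the Cartan matrix $C$ of $\Des{n}{K}$ is unitriangular, whence $\wt C_{\lambda,\lambda}=C_{\lambda,\lambda}=1$ and there is no loop. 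This is a clean partial result worth recording.

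For the rest your plan is candid about its incompleteness, but the propagation scheme for the ``if'' half of (i) has a structural gap. Removing a part $s$ from $\lambda$ works whenever $\ell(\lambda)\geq 2$: one can always choose $s$ so that $\nu=\lambda\setminus(s)$ still has a part divisible by $p$, and then $\nu^{\#s}=\lambda$. But for the single-part case $\lambda=(ap)$ with $\gcd(a,p)=1$ the $\sim_p$-class is $\{(ap),(a^p)\}$, and any $p$-regular $\nu$ with $\nu^{\#s}=(ap)$ is forced to be $(a^{p-1})$ or $\varnothing$, both satisfying $(\nu,p)=1$. So the partitions $(ap)$ for $a$ coprime to $p$ form an \emph{infinite} family of irreducible base cases, not just $(p)$; each would need its own direct self-extension construction, and your proposed derivation argument does not obviously produce one uniformly in $a$. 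For the ``only if'' half and the multiplicity-one claim in (ii) you rightly identify the crux as an \emph{upper} bound on $\dim_\F\Ext^1$, which neither the paper nor your outline supplies; this part of the conjecture remains genuinely open.
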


We remark that Corollary \ref{C:Ext1Descent} offers partial information for Conjecture \ref{Conj: Ext Quiv}(ii), that is, $\am^{Q_{n,p}}_{\lambda,\mu}\neq 0$ if there exist $\delta\sim_p\lambda$ and $\gamma\sim_p\mu$ such that $\am^{Q_{n,\infty}}_{\delta,\gamma}\neq 0$ (in this case, $\am^{Q_{n,\infty}}_{\delta,\gamma}=1$ as shown by Schocker (see Theorem \ref{T:OrdExtquiver})).

\begin{eg} It is not clear what the number of the loops at the vertices labelled by partitions $\lambda$ such that $(\lambda,p)\neq 1$ is. We demonstrate an example using Magma. Suppose that $p=2$ and we further simplify the notation by replacing $\am^{Q_{n,2}}_{\lambda,\lambda}$ with $\am_\lambda$ if $\lambda\in \Lambda^+_2(n)$. We have
\begin{align*}
\am_{2}&=1,& \am_{21}&=1,&\am_{4}&=2,& \am_{32}&=1,\\
\am_{41}&=2,&\am_{6}&=1,&\am_{42}&=2,& \am_{321}&=1,\\
\am_{421}&=2,&\am_{61}&=1,&\am_{52}&=1,&\am_{43}&=2,\\
\am_{62}&=2,&
\am_{8}&=3,&\am_{521}&=1,& \am_{431}&=2,
\end{align*} and $\am_\lambda=0$ if $\lambda\in\Lambda^+_2(n)$ where $n\in [1,8]$ and $(\lambda,2)=1$.
\end{eg}

\end{document}